\renewcommand{\geq}{\geqslant}
\renewcommand{\leq}{\leqslant}
\newtheorem{thm}{Theorem}[section]
\newtheorem{rem}[thm]{Remark}
\newtheorem{prop}[thm]{Proposition}
\newtheorem{lem}[thm]{Lemma}
\newtheorem{ex}[thm]{Example}
\newtheorem*{thmintro}{Theorem}
\newtheorem*{conj}{Conjecture}
\def\Z{\mathbb{Z}}
\def\N{\mathbb{N}}
\def\C{\mathbb{C}}
\def\Q{\mathbb{Q}}
\def\P1{\mathbb{P}^{1}}
\def\Etproj{\overline{E}}
\def\iup{{\widetilde{\iota}}}
\def\rx{r_{x}}
\def\ry{r_{y}}
\definecolor{darkgreen}{rgb}{0,0.4,0}
\definecolor{MyDarkBlue}{rgb}{0,0.08,0.50}
\definecolor{BrickRed}{rgb}{0.65,0.08,0}
\definecolor{dblue}  {RGB}{20,66,129}
\definecolor{nred}   {RGB}{224,0,0}
\definecolor{dgreen} {RGB}{78,138,21}
\definecolor{Apricot} {RGB}{255, 170, 123} 
\definecolor{dpurple}  {RGB}{53,21,93}
\def\testb#1{\testb@i#1,,\@nil}%
\def\testb@i#1,#2,#3\@nil{%
  \draw[->, thick] (O) --++(#1);
  \ifx\relax#2\relax\else\testb@i#2,#3\@nil\fi}
\newcommand{\makediag}[1]{
    \coordinate (O) at (0,0); \coordinate (N) at (0,0.8);
    \coordinate (NE) at (0.8,0.8); \coordinate (E) at (0.8,0);
    \coordinate (SE) at (0.8,-0.8); \coordinate (S) at (0,-0.8);
    \coordinate (SW) at (-0.8,-0.8);\coordinate (W) at (-0.8,0);
    \coordinate (N2E) at (1.6,0.8);\coordinate (S2W) at (-1.6,-0.8);
    \coordinate (NW) at (-0.8,0.8); \coordinate (B1) at (1.2,1.2);
    \coordinate (B2) at (-1.2,-1.2);
    \testb{#1}
} 
\newcommand{\diagr}[1]{
  \begin{tikzpicture}[scale=0.8]\makediag{#1}\end{tikzpicture}
}
\title{On the nature of four models of symmetric walks avoiding a quadrant}
\author{Thomas Dreyfus}
\address{Institut de Recherche Math\'ematique Avanc\'ee, U.M.R. 7501 Universit\'e de Strasbourg et C.N.R.S. 7, rue Ren\'e Descartes 67084 Strasbourg, France}
\email{dreyfus@math.unistra.fr}
\author{Am\'elie Trotignon}
\address{Institute for Algebra, Johannes Kepler University, Altenbergerstrasse 69 4040 Linz, Austria}\email{amelie.trotignon@jku.at}
\date{\today}
\begin{document}

\keywords{Random walks in the three quarter plane, Elliptic functions, Difference Galois theory, Transcendence}
\subjclass[2010]{05A15, 30D05, 39A06}
\thanks{This project has received funding from the European Research Council (ERC) under the European Union's Horizon 2020 research and innovation programme under the Grant Agreement No 759702. This project has received funding from the ANR de rerum natura ANR-19-CE40-0018. A. Trotignon was supported by the Austrian Science Fund (FWF) grant FWF05004.}

\begin{abstract}
We study the nature of the generating series of some models of walks with small steps in the three quarter plane. More precisely, we restrict ourselves to the situation where the group is infinite, the kernel has genus one, and the step set is diagonally symmetric (i.e., with no steps in anti-diagonal directions). In that situation, after a transformation of the plane, we derive a quadrant-like functional equation. Among the four models of walks, we obtain, using difference Galois theory, that three of them have a differentially transcendental  generating series, and one has a differentially algebraic generating series.
\end{abstract}

\setcounter{tocdepth}{1}
\maketitle 
\tableofcontents
\section*{Introduction}

In this paper, we consider walks with small steps in the three quarter plane
\begin{equation*}
     \mathcal C=\{(i,j)\in\mathbb Z^2: i\geq0 \text{ or } j\geq0\}.   
\end{equation*}

More precisely, we encode the eight cardinal directions of the plane by pairs of integers $(i,j)$ with $i,j \in \{0,\pm 1\}$.  We consider models of walks in the three quarter plane $\mathcal C$ satisfying the following properties: 
\begin{itemize} 
\item the walks start at $(0,0)$; 
\item the small steps walks take steps from $\mathcal{S}\subseteq \{0,\pm 1\}^{2}$. This is called the step set. Unless explicitly mentioned, we suppose in this paper that all models are small step models.
\end{itemize}

We introduce the corresponding trivariate generating series
$$
C(x,y;t):=\displaystyle \sum_{n=0}^{\infty}\sum_{(i,j)\in \mathcal{C}}c_{i,j}(n)x^{i}y^{j}t^{n},
$$
where $ c_{i,j}(n)$ denotes the number of walks in $\mathcal{C}$ reaching  the position $(i,j)$ from the initial position $(0,0)$ after $n$ steps in $\mathcal{S}$. We stress the fact that the paper is written in a more general weighted context, see Section \ref{sec:Kernel-FctEq}, but to simplify the exposition, we are going to restrict ourselves to the unweighted case in the introduction.

\vspace*{0.5cm}

\paragraph{\textbf{Context.}} Enumeration of lattice walks is a central question in combinatorics. Most studies have been done on walks confined to convex cones with various methods and techniques which enrich each other in many ways:  combinatorics~\cite{MiRe-09, BMM,  MeMi-14, MeMi-16}, complex analysis~\cite{FIM, KuRa-11, RaschelJEMS, FaRa-12, kurkova2012functions, KuRa-15}, probability theory~\cite{DeWa-15}, computer algebra~\cite{BostanKauersTheCompleteGenerating, BoChVHKaPe-17}, and Galois theory of difference equations~\cite{DHRS, dreyfus2020walks, dreyfus2017differential, dreyfus2019length}. Three natural topics arise in walk studies:  closed-form expressions for the generating series of the number of walks, the asymptotic behavior of the number of excursions, and the nature of the trivariate generating series. In this article, we are interested in the nature of the generating series $C(x,y;t)$ of walks avoiding a quadrant\footnote{Thereafter, the expressions {\it avoiding a quadrant}, {\it confined to the three quadrants}, and {\it confined to the three quarter plane}, will be used with no distinction.}.  A function can be rational, algebraic, D-finite, D-algebraic or D-transcendental with the following hierarchical chain
\begin{equation*}
\text{rational} \quad \varsubsetneq \quad \text{algebraic} \quad \varsubsetneq \quad \text{D-finite}\quad  \varsubsetneq \quad \text{D-algebraic}. 
\end{equation*} 
Rational and algebraic functions are classical notions. 
By $C(x,y;t)$ D-finite (resp.~D-algebraic) we mean that all  $x\mapsto C(x,y;t)$, $y\mapsto C(x,y;t)$, $t\mapsto C(x,y;t)$ satisfy a nontrivial linear (resp.~algebraic) differential equation with coefficients in $\Q(x,y,t)$. We say that $C(x,y;t)$ is D-transcendental if it is not D-algebraic. We refer to Section \ref{sec:DiffTranscendance} for more details. \par 
Walks in the plane always have rational generating series, while walks in the half plane always have algebraic generating series, see~\cite{BaFl-02}. The next step is to consider walks in the quarter plane. The situation gets much more complicated, and the nature of the generating series depends on the choice of the step set $\mathcal{S}$.
This question has generated a great interest and the determination of the nature of the generating series is now complete, see Figure~\ref{figcas}. More precisely, the  study has been started with the seminal paper \cite{BMM}. In this article, the authors prove that after elimination of trivial or one dimensional cases, and after considering symmetries, only $79$ of the original $2^{8}=256$ possible step sets $\mathcal{S}$ remain to study. They introduced a notion of group associated to the step set, and using combinatorial methods, proved that amongst the $23$ step sets with finite group, $22$ have a generating series which is D-finite, and even algebraic in $3$ cases. With computer algebra, the last finite group  case was solved  in \cite{BostanKauersTheCompleteGenerating} (see also \cite{fayolle2010holonomy,melczer2015asymptotics}), and proved to be algebraic. The study of the $56$ step sets with infinite group is more tricky. An algebraic curve is associated with a step set which has genus one in $51$ cases and genus zero in the $5$ other cases. With complex analysis, the authors of \cite{kurkova2012functions} proved that in the genus one case with infinite group, the generating series is not D-finite. Their work is based on the study of the uniformization of an elliptic curve initiated in \cite{FIM}. The classification between D-algebraic and D-transcendental is more recent.  In \cite{bernardi2017counting}, it is proved that in the $51$ above cited step sets, $9$ have a generating series which is D-algebraic. The difference  Galois theory, see \cite{DHRS,dreyfus2020walks,dreyfus2019length}, allows one to prove that the other $47$ step sets lead to D-transcendental generating series. This completes the study of the nature of the generating series of  walks in the quarter plane case. 
We refer to \cite{bostan2018counting} for a starting point of the study of walks with large steps (that is walks with arbitrary step set $\mathcal{S}  \subseteq \Z^{2}$), and for instance to \cite{dreyfus2017differential}, for generalization of some results in the weighted context.

\begin{figure}[!h]
\begin{trivlist}
\item  \begin{center}
Algebraic cases
\end{center}
$\begin{array}{llll}
\begin{tikzpicture}[scale=.2, baseline=(current bounding box.center)]
\draw[thick,->](0,0)--(1,1);
\draw[thick,->](0,0)--(-1,0);
\draw[thick,->](0,0)--(0,-1);
\end{tikzpicture}
& 
\begin{tikzpicture}[scale=.2, baseline=(current bounding box.center)]

\draw[thick,->](0,0)--(0,1);
\draw[thick,->](0,0)--(1,0);
\draw[thick,->](0,0)--(-1,-1);
\end{tikzpicture} 
&
\begin{tikzpicture}[scale=.2, baseline=(current bounding box.center)]

\draw[thick,->](0,0)--(0,1);
\draw[thick,->](0,0)--(1,1);
\draw[thick,->](0,0)--(-1,0);
\draw[thick,->](0,0)--(1,0);
\draw[thick,->](0,0)--(-1,-1);
\draw[thick,->](0,0)--(0,-1);
\end{tikzpicture}
&
\begin{tikzpicture}[scale=.2, baseline=(current bounding box.center)]

\draw[thick,->](0,0)--(1,1);
\draw[thick,->](0,0)--(-1,-1);
\draw[thick,->](0,0)--(1,0);
\draw[thick,->](0,0)--(-1,0);
\end{tikzpicture}
\end{array}$

\item \begin{center} D-finite cases \end{center}

$\begin{array}{llllllllllllllll}
\begin{tikzpicture}[scale=.2, baseline=(current bounding box.center)]

\draw[thick,->](0,0)--(0,1);
\draw[thick,->](0,0)--(-1,0);
\draw[thick,->](0,0)--(1,0);
\draw[thick,->](0,0)--(0,-1);
\end{tikzpicture}
&
\begin{tikzpicture}[scale=.2, baseline=(current bounding box.center)]

\draw[thick,->](0,0)--(-1,1);
\draw[thick,->](0,0)--(1,1);
\draw[thick,->](0,0)--(-1,-1);
\draw[thick,->](0,0)--(1,-1);
\end{tikzpicture}
&
\begin{tikzpicture}[scale=.2, baseline=(current bounding box.center)]

\draw[thick,->](0,0)--(-1,1);
\draw[thick,->](0,0)--(0,1);
\draw[thick,->](0,0)--(1,1);
\draw[thick,->](0,0)--(-1,-1);
\draw[thick,->](0,0)--(0,-1);
\draw[thick,->](0,0)--(1,-1);
\end{tikzpicture}
&
\begin{tikzpicture}[scale=.2, baseline=(current bounding box.center)]

\draw[thick,->](0,0)--(-1,1);
\draw[thick,->](0,0)--(0,1);
\draw[thick,->](0,0)--(1,1);
\draw[thick,->](0,0)--(-1,0);
\draw[thick,->](0,0)--(1,0);
\draw[thick,->](0,0)--(-1,-1);
\draw[thick,->](0,0)--(0,-1);
\draw[thick,->](0,0)--(1,-1);
\end{tikzpicture}
&
\begin{tikzpicture}[scale=.2, baseline=(current bounding box.center)]

\draw[thick,->](0,0)--(-1,1);
\draw[thick,->](0,0)--(1,1);
\draw[thick,->](0,0)--(0,-1);
\end{tikzpicture}
&
\begin{tikzpicture}[scale=.2, baseline=(current bounding box.center)]

\draw[thick,->](0,0)--(-1,1);
\draw[thick,->](0,0)--(1,1);
\draw[thick,->](0,0)--(-1,0);
\draw[thick,->](0,0)--(1,0);
\draw[thick,->](0,0)--(0,-1);
\end{tikzpicture}
&
\begin{tikzpicture}[scale=.2, baseline=(current bounding box.center)]

\draw[thick,->](0,0)--(-1,1);
\draw[thick,->](0,0)--(0,1);
\draw[thick,->](0,0)--(1,1);
\draw[thick,->](0,0)--(0,-1);
\end{tikzpicture}
&
\begin{tikzpicture}[scale=.2, baseline=(current bounding box.center)]

\draw[thick,->](0,0)--(-1,1);
\draw[thick,->](0,0)--(0,1);
\draw[thick,->](0,0)--(1,1);
\draw[thick,->](0,0)--(-1,0);
\draw[thick,->](0,0)--(1,0);
\draw[thick,->](0,0)--(0,-1);
\end{tikzpicture}
&
\begin{tikzpicture}[scale=.2, baseline=(current bounding box.center)]

\draw[thick,->](0,0)--(-1,1);
\draw[thick,->](0,0)--(0,1);
\draw[thick,->](0,0)--(1,1);
\draw[thick,->](0,0)--(-1,-1);
\draw[thick,->](0,0)--(1,-1);
\end{tikzpicture}
& 
\begin{tikzpicture}[scale=.2, baseline=(current bounding box.center)]

\draw[thick,->](0,0)--(-1,1);
\draw[thick,->](0,0)--(0,1);
\draw[thick,->](0,0)--(1,1);
\draw[thick,->](0,0)--(-1,0);
\draw[thick,->](0,0)--(1,0);
\draw[thick,->](0,0)--(-1,-1);
\draw[thick,->](0,0)--(1,-1);
\end{tikzpicture}
&
\begin{tikzpicture}[scale=.2, baseline=(current bounding box.center)]

\draw[thick,->](0,0)--(0,1);
\draw[thick,->](0,0)--(-1,-1);
\draw[thick,->](0,0)--(0,-1);
\draw[thick,->](0,0)--(1,-1);
\end{tikzpicture}
&
\begin{tikzpicture}[scale=.2, baseline=(current bounding box.center)]

\draw[thick,->](0,0)--(0,1);
\draw[thick,->](0,0)--(-1,0);
\draw[thick,->](0,0)--(1,0);
\draw[thick,->](0,0)--(-1,-1);red
\draw[thick,->](0,0)--(0,-1);
\draw[thick,->](0,0)--(1,-1);
\end{tikzpicture}
&
\begin{tikzpicture}[scale=.2, baseline=(current bounding box.center)]

\draw[thick,->](0,0)--(-1,1);
\draw[thick,->](0,0)--(1,1);
\draw[thick,->](0,0)--(-1,-1);
\draw[thick,->](0,0)--(0,-1);
\draw[thick,->](0,0)--(1,-1);
\end{tikzpicture}
&
\begin{tikzpicture}[scale=.2, baseline=(current bounding box.center)]

\draw[thick,->](0,0)--(-1,1);
\draw[thick,->](0,0)--(1,1);
\draw[thick,->](0,0)--(-1,0);
\draw[thick,->](0,0)--(1,0);
\draw[thick,->](0,0)--(-1,-1);
\draw[thick,->](0,0)--(0,-1);
\draw[thick,->](0,0)--(1,-1);
\end{tikzpicture}
&
\begin{tikzpicture}[scale=.2, baseline=(current bounding box.center)]

\draw[thick,->](0,0)--(0,1);
\draw[thick,->](0,0)--(1,-1);
\draw[thick,->](0,0)--(-1,-1);
\end{tikzpicture} 
&
\begin{tikzpicture}[scale=.2, baseline=(current bounding box.center)]

\draw[thick,->](0,0)--(0,1);
\draw[thick,->](0,0)--(1,-1);
\draw[thick,->](0,0)--(-1,-1);
\draw[thick,->](0,0)--(1,0);
\draw[thick,->](0,0)--(-1,0);
\end{tikzpicture}\\
\begin{tikzpicture}[scale=.2, baseline=(current bounding box.center)]

\draw[thick,->](0,0)--(0,1);
\draw[thick,->](0,0)--(-1,0);
\draw[thick,->](0,0)--(1,-1);
\end{tikzpicture}
&
\begin{tikzpicture}[scale=.2, baseline=(current bounding box.center)]

\draw[thick,->](0,0)--(-1,1);
\draw[thick,->](0,0)--(0,1);
\draw[thick,->](0,0)--(-1,0);
\draw[thick,->](0,0)--(1,0);
\draw[thick,->](0,0)--(0,-1);
\draw[thick,->](0,0)--(1,-1);
\end{tikzpicture}&
\begin{tikzpicture}[scale=.2, baseline=(current bounding box.center)]

\draw[thick,->](0,0)--(-1,1);
\draw[thick,->](0,0)--(1,-1);
\draw[thick,->](0,0)--(1,0);
\draw[thick,->](0,0)--(-1,0);
\end{tikzpicture}
&&&&&&&&&&&&
\end{array}$
\item
\begin{center}  D-algebraic cases \end{center}
$\begin{array}{lllllllll}
\begin{tikzpicture}[scale=.2, baseline=(current bounding box.center)]

\draw[thick,->](0,0)--(0,1);
\draw[thick,->](0,0)--(1,0);
\draw[thick,->](0,0)--(-1,-1);
\draw[thick,->](0,0)--(0,-1);
\end{tikzpicture}
&
\begin{tikzpicture}[scale=.2, baseline=(current bounding box.center)]

\draw[thick,->](0,0)--(0,1);
\draw[thick,->](0,0)--(1,0);
\draw[thick,->](0,0)--(-1,-1);
\draw[thick,->](0,0)--(1,-1);
\end{tikzpicture}
&
\begin{tikzpicture}[scale=.2, baseline=(current bounding box.center)]

\draw[thick,->](0,0)--(0,1);
\draw[thick,->](0,0)--(-1,0);
\draw[thick,->](0,0)--(1,0);
\draw[thick,->](0,0)--(1,-1);
\end{tikzpicture}
&
\begin{tikzpicture}[scale=.2, baseline=(current bounding box.center)]

\draw[thick,->](0,0)--(0,1);
\draw[thick,->](0,0)--(1,0);
\draw[thick,->](0,0)--(-1,-1);
\draw[thick,->](0,0)--(0,-1);
\draw[thick,->](0,0)--(1,-1);
\end{tikzpicture}
&
\begin{tikzpicture}[scale=.2, baseline=(current bounding box.center)]

\draw[thick,->](0,0)--(-1,1);
\draw[thick,->](0,0)--(0,1);
\draw[thick,->](0,0)--(1,0);
\draw[thick,->](0,0)--(0,-1);
\draw[thick,->](0,0)--(1,-1);
\end{tikzpicture}
&
\begin{tikzpicture}[scale=.2, baseline=(current bounding box.center)]

\draw[thick,->](0,0)--(0,1);
\draw[thick,->](0,0)--(1,1);
\draw[thick,->](0,0)--(-1,0);
\draw[thick,->](0,0)--(0,-1);
\end{tikzpicture}
&
\begin{tikzpicture}[scale=.2, baseline=(current bounding box.center)]

\draw[thick,->](0,0)--(0,1);
\draw[thick,->](0,0)--(1,1);
\draw[thick,->](0,0)--(-1,0);
\draw[thick,->](0,0)--(-1,-1);
\draw[thick,->](0,0)--(0,-1);
\end{tikzpicture}
&
\begin{tikzpicture}[scale=.2, baseline=(current bounding box.center)]

\draw[thick,->](0,0)--(0,1);
\draw[thick,->](0,0)--(1,1);
\draw[thick,->](0,0)--(-1,0);
\draw[thick,->](0,0)--(1,0);
\draw[thick,->](0,0)--(-1,-1);
\end{tikzpicture} 
&
\begin{tikzpicture}[scale=.2, baseline=(current bounding box.center)]

\draw[thick,->](0,0)--(0,1);
\draw[thick,->](0,0)--(1,1);
\draw[thick,->](0,0)--(-1,0);
\draw[thick,->](0,0)--(1,0);
\draw[thick,->](0,0)--(0,-1);
\end{tikzpicture}\end{array}$

\item
\begin{center} D-transcendental cases \end{center}
$\begin{array}{llllllllllllllll}
\begin{tikzpicture}[scale=.2, baseline=(current bounding box.center)]

\draw[thick,->](0,0)--(-1,1);
\draw[thick,->](0,0)--(1,1);
\draw[thick,->](0,0)--(0,-1);
\draw[thick,->](0,0)--(1,-1);
\end{tikzpicture}
& 
\begin{tikzpicture}[scale=.2, baseline=(current bounding box.center)]

\draw[thick,->](0,0)--(-1,1);
\draw[thick,->](0,0)--(0,1);
\draw[thick,->](0,0)--(1,1);
\draw[thick,->](0,0)--(0,-1);
\draw[thick,->](0,0)--(1,-1);
\end{tikzpicture}
&
\begin{tikzpicture}[scale=.2, baseline=(current bounding box.center)]

\draw[thick,->](0,0)--(-1,1);
\draw[thick,->](0,0)--(0,1);
\draw[thick,->](0,0)--(1,1);
\draw[thick,->](0,0)--(-1,0);
\draw[thick,->](0,0)--(1,-1);
\end{tikzpicture}
&
\begin{tikzpicture}[scale=.2, baseline=(current bounding box.center)]

\draw[thick,->](0,0)--(-1,1);
\draw[thick,->](0,0)--(1,1);
\draw[thick,->](0,0)--(-1,0);
\draw[thick,->](0,0)--(0,-1);
\draw[thick,->](0,0)--(1,-1);
\end{tikzpicture}
&
\begin{tikzpicture}[scale=.2, baseline=(current bounding box.center)]

\draw[thick,->](0,0)--(-1,1);
\draw[thick,->](0,0)--(0,1);
\draw[thick,->](0,0)--(1,1);
\draw[thick,->](0,0)--(-1,0);
\draw[thick,->](0,0)--(1,0);
\draw[thick,->](0,0)--(1,-1);
\end{tikzpicture}
&
\begin{tikzpicture}[scale=.2, baseline=(current bounding box.center)]

\draw[thick,->](0,0)--(-1,1);
\draw[thick,->](0,0)--(0,1);
\draw[thick,->](0,0)--(1,1);
\draw[thick,->](0,0)--(1,0);
\draw[thick,->](0,0)--(-1,-1);
\draw[thick,->](0,0)--(1,-1);
\end{tikzpicture}
&
\begin{tikzpicture}[scale=.2, baseline=(current bounding box.center)]

\draw[thick,->](0,0)--(-1,1);
\draw[thick,->](0,0)--(0,1);
\draw[thick,->](0,0)--(1,1);
\draw[thick,->](0,0)--(-1,0);
\draw[thick,->](0,0)--(0,-1);
\draw[thick,->](0,0)--(1,-1);
\end{tikzpicture}
&
\begin{tikzpicture}[scale=.2, baseline=(current bounding box.center)]

\draw[thick,->](0,0)--(-1,1);
\draw[thick,->](0,0)--(0,1);
\draw[thick,->](0,0)--(1,1);
\draw[thick,->](0,0)--(-1,0);
\draw[thick,->](0,0)--(-1,-1);
\draw[thick,->](0,0)--(1,-1);
\end{tikzpicture}
&
\begin{tikzpicture}[scale=.2, baseline=(current bounding box.center)]

\draw[thick,->](0,0)--(-1,1);
\draw[thick,->](0,0)--(1,1);
\draw[thick,->](0,0)--(-1,0);
\draw[thick,->](0,0)--(-1,-1);
\draw[thick,->](0,0)--(0,-1);
\draw[thick,->](0,0)--(1,-1);
\end{tikzpicture}
&
\begin{tikzpicture}[scale=.2, baseline=(current bounding box.center)]

\draw[thick,->](0,0)--(-1,1);
\draw[thick,->](0,0)--(0,1);
\draw[thick,->](0,0)--(1,1);
\draw[thick,->](0,0)--(-1,0);
\draw[thick,->](0,0)--(1,0);
\draw[thick,->](0,0)--(0,-1);
\draw[thick,->](0,0)--(1,-1);
\end{tikzpicture}
&
\begin{tikzpicture}[scale=.2, baseline=(current bounding box.center)]

\draw[thick,->](0,0)--(-1,1);
\draw[thick,->](0,0)--(1,1);
\draw[thick,->](0,0)--(-1,-1);
\draw[thick,->](0,0)--(0,-1);
\end{tikzpicture}
&
\begin{tikzpicture}[scale=.2, baseline=(current bounding box.center)]

\draw[thick,->](0,0)--(-1,1);
\draw[thick,->](0,0)--(1,1);
\draw[thick,->](0,0)--(-1,0);
\draw[thick,->](0,0)--(0,-1);
\end{tikzpicture}
&
\begin{tikzpicture}[scale=.2, baseline=(current bounding box.center)]

\draw[thick,->](0,0)--(-1,1);
\draw[thick,->](0,0)--(0,1);
\draw[thick,->](0,0)--(1,1);
\draw[thick,->](0,0)--(-1,-1);
\draw[thick,->](0,0)--(0,-1);
\end{tikzpicture}
&
\begin{tikzpicture}[scale=.2, baseline=(current bounding box.center)]

\draw[thick,->](0,0)--(-1,1);
\draw[thick,->](0,0)--(0,1);
\draw[thick,->](0,0)--(1,1);
\draw[thick,->](0,0)--(-1,0);
\draw[thick,->](0,0)--(0,-1);
\end{tikzpicture}
&
\begin{tikzpicture}[scale=.2, baseline=(current bounding box.center)]

\draw[thick,->](0,0)--(-1,1);
\draw[thick,->](0,0)--(1,1);
\draw[thick,->](0,0)--(-1,0);
\draw[thick,->](0,0)--(-1,-1);
\draw[thick,->](0,0)--(0,-1);
\end{tikzpicture}
&
\begin{tikzpicture}[scale=.2, baseline=(current bounding box.center)]

\draw[thick,->](0,0)--(-1,1);
\draw[thick,->](0,0)--(0,1);
\draw[thick,->](0,0)--(1,1);
\draw[thick,->](0,0)--(-1,0);
\draw[thick,->](0,0)--(-1,-1);
\draw[thick,->](0,0)--(0,-1);
\end{tikzpicture}
\\
\begin{tikzpicture}[scale=.2, baseline=(current bounding box.center)]

\draw[thick,->](0,0)--(-1,1);
\draw[thick,->](0,0)--(0,1);
\draw[thick,->](0,0)--(1,1);
\draw[thick,->](0,0)--(1,0);
\draw[thick,->](0,0)--(-1,-1);
\end{tikzpicture}
&
\begin{tikzpicture}[scale=.2, baseline=(current bounding box.center)]

\draw[thick,->](0,0)--(-1,1);
\draw[thick,->](0,0)--(0,1);
\draw[thick,->](0,0)--(1,1);
\draw[thick,->](0,0)--(1,0);
\draw[thick,->](0,0)--(0,-1);
\end{tikzpicture}
&
\begin{tikzpicture}[scale=.2, baseline=(current bounding box.center)]

\draw[thick,->](0,0)--(-1,1);
\draw[thick,->](0,0)--(0,1);
\draw[thick,->](0,0)--(1,1);
\draw[thick,->](0,0)--(-1,0);
\draw[thick,->](0,0)--(1,0);
\draw[thick,->](0,0)--(-1,-1);
\draw[thick,->](0,0)--(0,-1);
\end{tikzpicture}
&
\begin{tikzpicture}[scale=.2, baseline=(current bounding box.center)]

\draw[thick,->](0,0)--(0,1);
\draw[thick,->](0,0)--(1,1);
\draw[thick,->](0,0)--(-1,-1);
\draw[thick,->](0,0)--(1,-1);
\end{tikzpicture}
&
\begin{tikzpicture}[scale=.2, baseline=(current bounding box.center)]

\draw[thick,->](0,0)--(0,1);
\draw[thick,->](0,0)--(1,1);
\draw[thick,->](0,0)--(-1,0);
\draw[thick,->](0,0)--(1,-1);
\end{tikzpicture}
&
\begin{tikzpicture}[scale=.2, baseline=(current bounding box.center)]

\draw[thick,->](0,0)--(0,1);
\draw[thick,->](0,0)--(1,1);
\draw[thick,->](0,0)--(-1,0);
\draw[thick,->](0,0)--(-1,-1);
\draw[thick,->](0,0)--(1,-1);
\end{tikzpicture}
&
\begin{tikzpicture}[scale=.2, baseline=(current bounding box.center)]

\draw[thick,->](0,0)--(0,1);
\draw[thick,->](0,0)--(1,1);
\draw[thick,->](0,0)--(-1,-1);
\draw[thick,->](0,0)--(0,-1);
\draw[thick,->](0,0)--(1,-1);
\end{tikzpicture}
&
\begin{tikzpicture}[scale=.2, baseline=(current bounding box.center)]

\draw[thick,->](0,0)--(0,1);
\draw[thick,->](0,0)--(1,1);
\draw[thick,->](0,0)--(1,0);
\draw[thick,->](0,0)--(-1,-1);
\draw[thick,->](0,0)--(0,-1);
\draw[thick,->](0,0)--(1,-1);
\end{tikzpicture}
&
\begin{tikzpicture}[scale=.2, baseline=(current bounding box.center)]

\draw[thick,->](0,0)--(0,1);
\draw[thick,->](0,0)--(1,1);
\draw[thick,->](0,0)--(-1,0);
\draw[thick,->](0,0)--(1,0);
\draw[thick,->](0,0)--(-1,-1);
\draw[thick,->](0,0)--(1,-1);
\end{tikzpicture}
&
 \begin{tikzpicture}[scale=.2, baseline=(current bounding box.center)]

\draw[thick,->](0,0)--(0,1);
\draw[thick,->](0,0)--(1,1);
\draw[thick,->](0,0)--(-1,0);
\draw[thick,->](0,0)--(-1,-1);
\draw[thick,->](0,0)--(0,-1);
\draw[thick,->](0,0)--(1,-1);
\end{tikzpicture}
&
\begin{tikzpicture}[scale=.2, baseline=(current bounding box.center)]

\draw[thick,->](0,0)--(0,1);
\draw[thick,->](0,0)--(-1,0);
\draw[thick,->](0,0)--(1,0);
\draw[thick,->](0,0)--(0,-1);
\draw[thick,->](0,0)--(1,-1);
\end{tikzpicture}
&
\begin{tikzpicture}[scale=.2, baseline=(current bounding box.center)]

\draw[thick,->](0,0)--(0,1);
\draw[thick,->](0,0)--(-1,0);
\draw[thick,->](0,0)--(1,0);
\draw[thick,->](0,0)--(-1,-1);
\draw[thick,->](0,0)--(0,-1);
\end{tikzpicture}
&
\begin{tikzpicture}[scale=.2, baseline=(current bounding box.center)]

\draw[thick,->](0,0)--(-1,1);
\draw[thick,->](0,0)--(0,1);
\draw[thick,->](0,0)--(1,0);
\draw[thick,->](0,0)--(-1,-1);
\draw[thick,->](0,0)--(1,-1);
\end{tikzpicture}
&
\begin{tikzpicture}[scale=.2, baseline=(current bounding box.center)]

\draw[thick,->](0,0)--(-1,1);
\draw[thick,->](0,0)--(0,1);
\draw[thick,->](0,0)--(-1,0);
\draw[thick,->](0,0)--(1,0);
\draw[thick,->](0,0)--(-1,-1);
\draw[thick,->](0,0)--(1,-1);
\end{tikzpicture}
&
\begin{tikzpicture}[scale=.2, baseline=(current bounding box.center)]

\draw[thick,->](0,0)--(-1,1);
\draw[thick,->](0,0)--(0,1);
\draw[thick,->](0,0)--(-1,0);
\draw[thick,->](0,0)--(1,0);
\draw[thick,->](0,0)--(-1,-1);
\draw[thick,->](0,0)--(0,-1);
\draw[thick,->](0,0)--(1,-1);
\end{tikzpicture}
&
\begin{tikzpicture}[scale=.2, baseline=(current bounding box.center)]

\draw[thick,->](0,0)--(0,1);
\draw[thick,->](0,0)--(-1,0);
\draw[thick,->](0,0)--(0,-1);
\draw[thick,->](0,0)--(1,-1);
\end{tikzpicture}
\\
\begin{tikzpicture}[scale=.2, baseline=(current bounding box.center)]

\draw[thick,->](0,0)--(0,1);
\draw[thick,->](0,0)--(-1,0);
\draw[thick,->](0,0)--(-1,-1);
\draw[thick,->](0,0)--(1,-1);
\end{tikzpicture}
&
\begin{tikzpicture}[scale=.2, baseline=(current bounding box.center)]

\draw[thick,->](0,0)--(-1,1);
\draw[thick,->](0,0)--(0,1);
\draw[thick,->](0,0)--(-1,-1);
\draw[thick,->](0,0)--(1,-1);
\end{tikzpicture}
&
\begin{tikzpicture}[scale=.2, baseline=(current bounding box.center)]

\draw[thick,->](0,0)--(-1,1);
\draw[thick,->](0,0)--(0,1);
\draw[thick,->](0,0)--(-1,0);
\draw[thick,->](0,0)--(1,-1);
\end{tikzpicture}
&
\begin{tikzpicture}[scale=.2, baseline=(current bounding box.center)]

\draw[thick,->](0,0)--(0,1);
\draw[thick,->](0,0)--(-1,0);
\draw[thick,->](0,0)--(-1,-1);
\draw[thick,->](0,0)--(0,-1);
\draw[thick,->](0,0)--(1,-1);
\end{tikzpicture}
&
\begin{tikzpicture}[scale=.2, baseline=(current bounding box.center)]

\draw[thick,->](0,0)--(-1,1);
\draw[thick,->](0,0)--(0,1);
\draw[thick,->](0,0)--(-1,-1);
\draw[thick,->](0,0)--(0,-1);
\draw[thick,->](0,0)--(1,-1);
\end{tikzpicture}
&
 \begin{tikzpicture}[scale=.2, baseline=(current bounding box.center)]

\draw[thick,->](0,0)--(-1,1);
\draw[thick,->](0,0)--(0,1);
\draw[thick,->](0,0)--(-1,0);
\draw[thick,->](0,0)--(0,-1);
\draw[thick,->](0,0)--(1,-1);
\end{tikzpicture}
&
 \begin{tikzpicture}[scale=.2, baseline=(current bounding box.center)]

\draw[thick,->](0,0)--(-1,1);
\draw[thick,->](0,0)--(0,1);
\draw[thick,->](0,0)--(-1,0);
\draw[thick,->](0,0)--(-1,-1);
\draw[thick,->](0,0)--(1,-1);
\end{tikzpicture}
&
\begin{tikzpicture}[scale=.2, baseline=(current bounding box.center)]

\draw[thick,->](0,0)--(-1,1);
\draw[thick,->](0,0)--(0,1);
\draw[thick,->](0,0)--(-1,0);
\draw[thick,->](0,0)--(-1,-1);
\draw[thick,->](0,0)--(0,-1);
\draw[thick,->](0,0)--(1,-1);
\end{tikzpicture}
& 
\begin{tikzpicture}[scale=.2, baseline=(current bounding box.center)]

\draw[thick,->](0,0)--(1,1);
\draw[thick,->](0,0)--(-1,0);
\draw[thick,->](0,0)--(-1,-1);
\draw[thick,->](0,0)--(0,-1);
\end{tikzpicture}
&
\begin{tikzpicture}[scale=.2, baseline=(current bounding box.center)]

\draw[thick,->](0,0)--(0,1);
\draw[thick,->](0,0)--(1,1);
\draw[thick,->](0,0)--(1,0);
\draw[thick,->](0,0)--(-1,-1);
\end{tikzpicture}
& 
\begin{tikzpicture}[scale=.2, baseline=(current bounding box.center)]

\draw[thick,->](0,0)--(-1,1);
\draw[thick,->](0,0)--(0,1);
\draw[thick,->](0,0)--(1,1);
\draw[thick,->](0,0)--(1,0);
\draw[thick,->](0,0)--(1,-1);
\end{tikzpicture} 
&
\begin{tikzpicture}[scale=.2, baseline=(current bounding box.center)]

\draw[thick,->](0,0)--(-1,1);
\draw[thick,->](0,0)--(0,1);
\draw[thick,->](0,0)--(1,0);
\draw[thick,->](0,0)--(1,-1);
\end{tikzpicture} 
&
\begin{tikzpicture}[scale=.2, baseline=(current bounding box.center)]

\draw[thick,->](0,0)--(-1,1);
\draw[thick,->](0,0)--(0,1);
\draw[thick,->](0,0)--(1,-1);
\end{tikzpicture} 
& 
\begin{tikzpicture}[scale=.2, baseline=(current bounding box.center)]

\draw[thick,->](0,0)--(-1,1);
\draw[thick,->](0,0)--(1,1);
\draw[thick,->](0,0)--(1,-1);
\end{tikzpicture} 
&
\begin{tikzpicture}[scale=.2, baseline=(current bounding box.center)]

\draw[thick,->](0,0)--(-1,1);
\draw[thick,->](0,0)--(0,1);
\draw[thick,->](0,0)--(1,1);
\draw[thick,->](0,0)--(1,-1);
\end{tikzpicture}&
\end{array}$
\end{trivlist}
\caption{Classification of the $79$ models of walks in the quarter plane. The algebraic and D-finite cases correspond to walks with a finite group.}\label{figcas}
\end{figure}
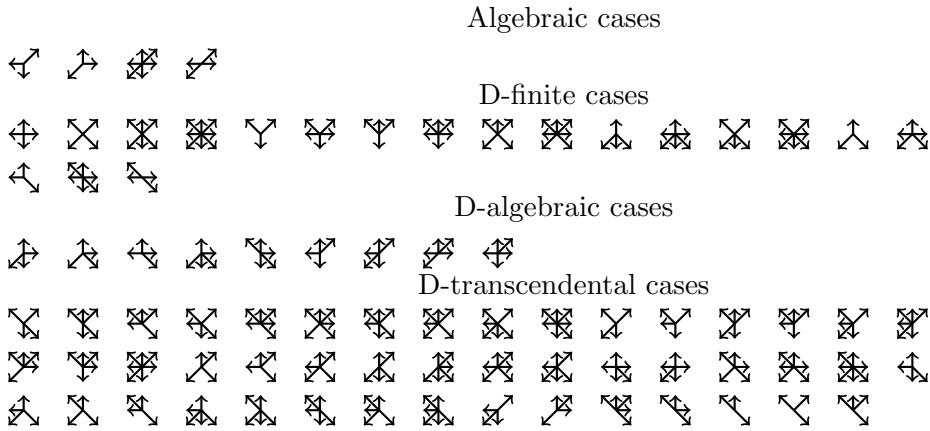

The next step is the study of walks in the three quarter plane. This question is not vacuous, since every walk confined to a cone whose apex is the origin, and whose opening angle is $\alpha 2\pi$, with $\alpha\in (0,1)\cap\Q$, is equivalent to a walk, with possibly large steps, confined to the half plane (when $\alpha=1/2$), quarter plane (when the cone is convex), or three quarter plane (when the cone is concave). Similarly to the quarter plane case, the study of walks with large steps is far from being reachable in full generality. Let us focus on the models with small steps.  The $5$ genus zero configurations, which correspond to the situation where the step set is included in $\begin{tikzpicture}[scale=.2, baseline=(current bounding box.center)]

\draw[thick,->](0,0)--(-1,1);
\draw[thick,->](0,0)--(0,1);
\draw[thick,->](0,0)--(1,1);
\draw[thick,->](0,0)--(1,0);
\draw[thick,->](0,0)--(1,-1);
\end{tikzpicture}$,
are not constrained by the three quarter plane restriction, they are analogous to a walk model in the plane and therefore have a rational generating function.
In \cite{BM-16}, Bousquet-M\'elou proves that the simple walks (with step set $\begin{tikzpicture}[scale=.2, baseline=(current bounding box.center)]
\draw[thick,->](0,0)--(0,1);
\draw[thick,->](0,0)--(0,-1);
\draw[thick,->](0,0)--(1,0);
\draw[thick,->](0,0)--(-1,0);
\end{tikzpicture}$) and the diagonal walks (with step set $\begin{tikzpicture}[scale=.2, baseline=(current bounding box.center)]
\draw[thick,->](0,0)--(1,1);
\draw[thick,->](0,0)--(-1,-1);
\draw[thick,->](0,0)--(1,-1);
\draw[thick,->](0,0)--(-1,1);
\end{tikzpicture}$) have  D-finite generating series. A few years later, with the same methods, Bousquet-M\'elou and Wallner \cite{BMW-20} prove that the king walks (with step set $\begin{tikzpicture}[scale=.2, baseline=(current bounding box.center)]
\draw[thick,->](0,0)--(0,1);
\draw[thick,->](0,0)--(0,-1);
\draw[thick,->](0,0)--(1,0);
\draw[thick,->](0,0)--(-1,0);
\draw[thick,->](0,0)--(1,1);
\draw[thick,->](0,0)--(-1,-1);
\draw[thick,->](0,0)--(1,-1);
\draw[thick,->](0,0)--(-1,1);
\end{tikzpicture}$)
also have  D-finite generating series. Using an original connection with planar maps, Budd~\cite[Sec.~3.3]{Bu-20} obtains various enumerating formulas for planar walks, keeping track of the widing  angle. In \cite{mustapha2019non}, Mustapha proves that the generating series for genus one walks with infinite group is not D-finite. Finally, in \cite{RaTr-18}, using integral expression of the generating series, the authors prove that four diagonally symmetric models with finite group have a D-finite generating series. Figure \ref{figcas34} summarizes the current classification of walks in the three quarter plane. 

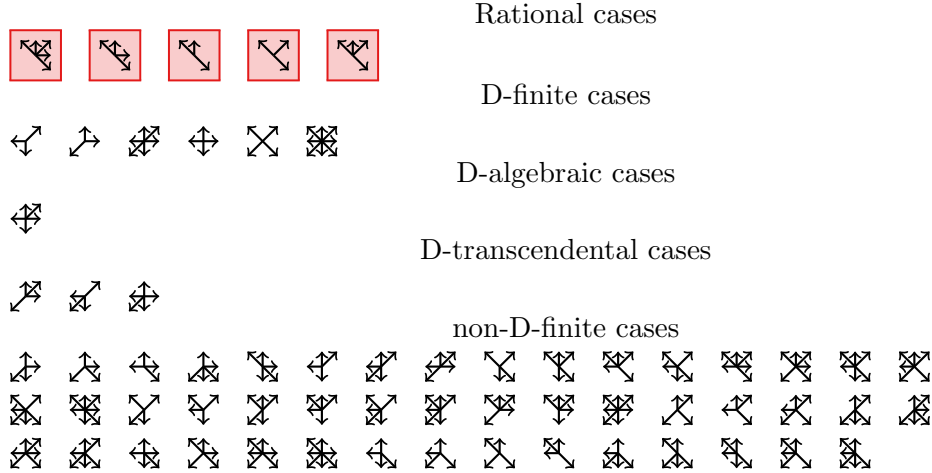
\begin{figure}[!h]
\begin{trivlist}
\item  \begin{center}
Rational cases
\end{center}
$\begin{array}{lllll}
\setlength{\fboxrule}{0.75pt}
\fcolorbox{nred!90}{nred!20}{$
\begin{tikzpicture}[scale=.2, baseline=(current bounding box.center)]

\draw[thick,->](0,0)--(-1,1);
\draw[thick,->](0,0)--(0,1);
\draw[thick,->](0,0)--(1,1);
\draw[thick,->](0,0)--(1,0);
\draw[thick,->](0,0)--(1,-1);
\end{tikzpicture} $}
&
\setlength{\fboxrule}{0.75pt}
\fcolorbox{nred!90}{nred!20}{$
\begin{tikzpicture}[scale=.2, baseline=(current bounding box.center)]

\draw[thick,->](0,0)--(-1,1);
\draw[thick,->](0,0)--(0,1);
\draw[thick,->](0,0)--(1,0);
\draw[thick,->](0,0)--(1,-1);
\end{tikzpicture} $}
&
\setlength{\fboxrule}{0.75pt}
\fcolorbox{nred!90}{nred!20}{$
\begin{tikzpicture}[scale=.2, baseline=(current bounding box.center)]

\draw[thick,->](0,0)--(-1,1);
\draw[thick,->](0,0)--(0,1);
\draw[thick,->](0,0)--(1,-1);
\end{tikzpicture} $}
& 
\setlength{\fboxrule}{0.75pt}
\fcolorbox{nred!90}{nred!20}{$
\begin{tikzpicture}[scale=.2, baseline=(current bounding box.center)]

\draw[thick,->](0,0)--(-1,1);
\draw[thick,->](0,0)--(1,1);
\draw[thick,->](0,0)--(1,-1);
\end{tikzpicture} $}
&
\setlength{\fboxrule}{0.75pt}
\fcolorbox{nred!90}{nred!20}{$
\begin{tikzpicture}[scale=.2, baseline=(current bounding box.center)]

\draw[thick,->](0,0)--(-1,1);
\draw[thick,->](0,0)--(0,1);
\draw[thick,->](0,0)--(1,1);
\draw[thick,->](0,0)--(1,-1);
\end{tikzpicture}$}
\end{array}$

\item \begin{center} D-finite cases \end{center}

$\begin{array}{llllll}
\begin{tikzpicture}[scale=.2, baseline=(current bounding box.center)]

\draw[thick,->](0,0)--(1,1);
\draw[thick,->](0,0)--(-1,0);
\draw[thick,->](0,0)--(0,-1);
\end{tikzpicture}
&
\begin{tikzpicture}[scale=.2, baseline=(current bounding box.center)]

\draw[thick,->](0,0)--(0,1);
\draw[thick,->](0,0)--(1,0);
\draw[thick,->](0,0)--(-1,-1);
\end{tikzpicture}
&
\begin{tikzpicture}[scale=.2, baseline=(current bounding box.center)]

\draw[thick,->](0,0)--(0,1);
\draw[thick,->](0,0)--(1,1);
\draw[thick,->](0,0)--(-1,0);
\draw[thick,->](0,0)--(1,0);
\draw[thick,->](0,0)--(-1,-1);
\draw[thick,->](0,0)--(0,-1);
\end{tikzpicture}
&
\begin{tikzpicture}[scale=.2, baseline=(current bounding box.center)]

\draw[thick,->](0,0)--(0,1);
\draw[thick,->](0,0)--(-1,0);
\draw[thick,->](0,0)--(1,0);
\draw[thick,->](0,0)--(0,-1);
\end{tikzpicture}
&
\begin{tikzpicture}[scale=.2, baseline=(current bounding box.center)]

\draw[thick,->](0,0)--(-1,1);
\draw[thick,->](0,0)--(1,1);
\draw[thick,->](0,0)--(-1,-1);
\draw[thick,->](0,0)--(1,-1);
\end{tikzpicture}
&
\begin{tikzpicture}[scale=.2, baseline=(current bounding box.center)]

\draw[thick,->](0,0)--(-1,1);
\draw[thick,->](0,0)--(0,1);
\draw[thick,->](0,0)--(1,1);
\draw[thick,->](0,0)--(-1,0);
\draw[thick,->](0,0)--(1,0);
\draw[thick,->](0,0)--(-1,-1);
\draw[thick,->](0,0)--(0,-1);
\draw[thick,->](0,0)--(1,-1);
\end{tikzpicture}

\end{array}$

\item \begin{center} D-algebraic cases \end{center}

$\begin{array}{l}
\begin{tikzpicture}[scale=.2, baseline=(current bounding box.center)]

\draw[thick,->](0,0)--(0,1);
\draw[thick,->](0,0)--(1,1);
\draw[thick,->](0,0)--(-1,0);
\draw[thick,->](0,0)--(1,0);
\draw[thick,->](0,0)--(0,-1);
\end{tikzpicture}

\end{array}$

\item \begin{center} D-transcendental cases \end{center}

$\begin{array}{lll}
\begin{tikzpicture}[scale=.2, baseline=(current bounding box.center)]
\draw[thick,->](0,0)--(0,1);
\draw[thick,->](0,0)--(1,1);
\draw[thick,->](0,0)--(1,0);
\draw[thick,->](0,0)--(-1,-1);
\end{tikzpicture}
&
\setlength{\fboxrule}{0.75pt}
\begin{tikzpicture}[scale=.2, baseline=(current bounding box.center)]
\draw[thick,->](0,0)--(1,1);
\draw[thick,->](0,0)--(-1,0);
\draw[thick,->](0,0)--(-1,-1);
\draw[thick,->](0,0)--(0,-1);
\end{tikzpicture}
&
\begin{tikzpicture}[scale=.2, baseline=(current bounding box.center)]

\draw[thick,->](0,0)--(0,1);
\draw[thick,->](0,0)--(-1,0);
\draw[thick,->](0,0)--(1,0);
\draw[thick,->](0,0)--(-1,-1);
\draw[thick,->](0,0)--(0,-1);
\end{tikzpicture}

\end{array}$

\item \begin{center} non-D-finite cases \end{center}

$\begin{array}{llllllllllllllll}

\begin{tikzpicture}[scale=.2, baseline=(current bounding box.center)]

\draw[thick,->](0,0)--(0,1);
\draw[thick,->](0,0)--(1,0);
\draw[thick,->](0,0)--(-1,-1);
\draw[thick,->](0,0)--(0,-1);
\end{tikzpicture}
&
\begin{tikzpicture}[scale=.2, baseline=(current bounding box.center)]

\draw[thick,->](0,0)--(0,1);
\draw[thick,->](0,0)--(1,0);
\draw[thick,->](0,0)--(-1,-1);
\draw[thick,->](0,0)--(1,-1);
\end{tikzpicture}
&
\begin{tikzpicture}[scale=.2, baseline=(current bounding box.center)]

\draw[thick,->](0,0)--(0,1);
\draw[thick,->](0,0)--(-1,0);
\draw[thick,->](0,0)--(1,0);
\draw[thick,->](0,0)--(1,-1);
\end{tikzpicture}
&
\begin{tikzpicture}[scale=.2, baseline=(current bounding box.center)]

\draw[thick,->](0,0)--(0,1);
\draw[thick,->](0,0)--(1,0);
\draw[thick,->](0,0)--(-1,-1);
\draw[thick,->](0,0)--(0,-1);
\draw[thick,->](0,0)--(1,-1);
\end{tikzpicture}
&
\begin{tikzpicture}[scale=.2, baseline=(current bounding box.center)]

\draw[thick,->](0,0)--(-1,1);
\draw[thick,->](0,0)--(0,1);
\draw[thick,->](0,0)--(1,0);
\draw[thick,->](0,0)--(0,-1);
\draw[thick,->](0,0)--(1,-1);
\end{tikzpicture}
&
\begin{tikzpicture}[scale=.2, baseline=(current bounding box.center)]

\draw[thick,->](0,0)--(0,1);
\draw[thick,->](0,0)--(1,1);
\draw[thick,->](0,0)--(-1,0);
\draw[thick,->](0,0)--(0,-1);
\end{tikzpicture}
&
\begin{tikzpicture}[scale=.2, baseline=(current bounding box.center)]

\draw[thick,->](0,0)--(0,1);
\draw[thick,->](0,0)--(1,1);
\draw[thick,->](0,0)--(-1,0);
\draw[thick,->](0,0)--(-1,-1);
\draw[thick,->](0,0)--(0,-1);
\end{tikzpicture}
&
\begin{tikzpicture}[scale=.2, baseline=(current bounding box.center)]

\draw[thick,->](0,0)--(0,1);
\draw[thick,->](0,0)--(1,1);
\draw[thick,->](0,0)--(-1,0);
\draw[thick,->](0,0)--(1,0);
\draw[thick,->](0,0)--(-1,-1);
\end{tikzpicture} 
&
\begin{tikzpicture}[scale=.2, baseline=(current bounding box.center)]

\draw[thick,->](0,0)--(-1,1);
\draw[thick,->](0,0)--(1,1);
\draw[thick,->](0,0)--(0,-1);
\draw[thick,->](0,0)--(1,-1);
\end{tikzpicture}
& 
\begin{tikzpicture}[scale=.2, baseline=(current bounding box.center)]

\draw[thick,->](0,0)--(-1,1);
\draw[thick,->](0,0)--(0,1);
\draw[thick,->](0,0)--(1,1);
\draw[thick,->](0,0)--(0,-1);
\draw[thick,->](0,0)--(1,-1);
\end{tikzpicture}
&
\begin{tikzpicture}[scale=.2, baseline=(current bounding box.center)]

\draw[thick,->](0,0)--(-1,1);
\draw[thick,->](0,0)--(0,1);
\draw[thick,->](0,0)--(1,1);
\draw[thick,->](0,0)--(-1,0);
\draw[thick,->](0,0)--(1,-1);
\end{tikzpicture}
&
\begin{tikzpicture}[scale=.2, baseline=(current bounding box.center)]

\draw[thick,->](0,0)--(-1,1);
\draw[thick,->](0,0)--(1,1);
\draw[thick,->](0,0)--(-1,0);
\draw[thick,->](0,0)--(0,-1);
\draw[thick,->](0,0)--(1,-1);
\end{tikzpicture}
&
\begin{tikzpicture}[scale=.2, baseline=(current bounding box.center)]

\draw[thick,->](0,0)--(-1,1);
\draw[thick,->](0,0)--(0,1);
\draw[thick,->](0,0)--(1,1);
\draw[thick,->](0,0)--(-1,0);
\draw[thick,->](0,0)--(1,0);
\draw[thick,->](0,0)--(1,-1);
\end{tikzpicture}
&
\begin{tikzpicture}[scale=.2, baseline=(current bounding box.center)]

\draw[thick,->](0,0)--(-1,1);
\draw[thick,->](0,0)--(0,1);
\draw[thick,->](0,0)--(1,1);
\draw[thick,->](0,0)--(1,0);
\draw[thick,->](0,0)--(-1,-1);
\draw[thick,->](0,0)--(1,-1);
\end{tikzpicture}
&
\begin{tikzpicture}[scale=.2, baseline=(current bounding box.center)]

\draw[thick,->](0,0)--(-1,1);
\draw[thick,->](0,0)--(0,1);
\draw[thick,->](0,0)--(1,1);
\draw[thick,->](0,0)--(-1,0);
\draw[thick,->](0,0)--(0,-1);
\draw[thick,->](0,0)--(1,-1);
\end{tikzpicture}
&
\begin{tikzpicture}[scale=.2, baseline=(current bounding box.center)]

\draw[thick,->](0,0)--(-1,1);
\draw[thick,->](0,0)--(0,1);
\draw[thick,->](0,0)--(1,1);
\draw[thick,->](0,0)--(-1,0);
\draw[thick,->](0,0)--(-1,-1);
\draw[thick,->](0,0)--(1,-1);
\end{tikzpicture}
\\
\begin{tikzpicture}[scale=.2, baseline=(current bounding box.center)]

\draw[thick,->](0,0)--(-1,1);
\draw[thick,->](0,0)--(1,1);
\draw[thick,->](0,0)--(-1,0);
\draw[thick,->](0,0)--(-1,-1);
\draw[thick,->](0,0)--(0,-1);
\draw[thick,->](0,0)--(1,-1);
\end{tikzpicture}
&
\begin{tikzpicture}[scale=.2, baseline=(current bounding box.center)]

\draw[thick,->](0,0)--(-1,1);
\draw[thick,->](0,0)--(0,1);
\draw[thick,->](0,0)--(1,1);
\draw[thick,->](0,0)--(-1,0);
\draw[thick,->](0,0)--(1,0);
\draw[thick,->](0,0)--(0,-1);
\draw[thick,->](0,0)--(1,-1);
\end{tikzpicture}
&
\begin{tikzpicture}[scale=.2, baseline=(current bounding box.center)]

\draw[thick,->](0,0)--(-1,1);
\draw[thick,->](0,0)--(1,1);
\draw[thick,->](0,0)--(-1,-1);
\draw[thick,->](0,0)--(0,-1);
\end{tikzpicture}
&
\begin{tikzpicture}[scale=.2, baseline=(current bounding box.center)]

\draw[thick,->](0,0)--(-1,1);
\draw[thick,->](0,0)--(1,1);
\draw[thick,->](0,0)--(-1,0);
\draw[thick,->](0,0)--(0,-1);
\end{tikzpicture}
&
\begin{tikzpicture}[scale=.2, baseline=(current bounding box.center)]

\draw[thick,->](0,0)--(-1,1);
\draw[thick,->](0,0)--(0,1);
\draw[thick,->](0,0)--(1,1);
\draw[thick,->](0,0)--(-1,-1);
\draw[thick,->](0,0)--(0,-1);
\end{tikzpicture}
&
\begin{tikzpicture}[scale=.2, baseline=(current bounding box.center)]

\draw[thick,->](0,0)--(-1,1);
\draw[thick,->](0,0)--(0,1);
\draw[thick,->](0,0)--(1,1);
\draw[thick,->](0,0)--(-1,0);
\draw[thick,->](0,0)--(0,-1);
\end{tikzpicture}
&
\begin{tikzpicture}[scale=.2, baseline=(current bounding box.center)]

\draw[thick,->](0,0)--(-1,1);
\draw[thick,->](0,0)--(1,1);
\draw[thick,->](0,0)--(-1,0);
\draw[thick,->](0,0)--(-1,-1);
\draw[thick,->](0,0)--(0,-1);
\end{tikzpicture}
&
\begin{tikzpicture}[scale=.2, baseline=(current bounding box.center)]

\draw[thick,->](0,0)--(-1,1);
\draw[thick,->](0,0)--(0,1);
\draw[thick,->](0,0)--(1,1);
\draw[thick,->](0,0)--(-1,0);
\draw[thick,->](0,0)--(-1,-1);
\draw[thick,->](0,0)--(0,-1);
\end{tikzpicture}
&
\begin{tikzpicture}[scale=.2, baseline=(current bounding box.center)]

\draw[thick,->](0,0)--(-1,1);
\draw[thick,->](0,0)--(0,1);
\draw[thick,->](0,0)--(1,1);
\draw[thick,->](0,0)--(1,0);
\draw[thick,->](0,0)--(-1,-1);
\end{tikzpicture}
&
\begin{tikzpicture}[scale=.2, baseline=(current bounding box.center)]

\draw[thick,->](0,0)--(-1,1);
\draw[thick,->](0,0)--(0,1);
\draw[thick,->](0,0)--(1,1);
\draw[thick,->](0,0)--(1,0);
\draw[thick,->](0,0)--(0,-1);
\end{tikzpicture}
&
\begin{tikzpicture}[scale=.2, baseline=(current bounding box.center)]

\draw[thick,->](0,0)--(-1,1);
\draw[thick,->](0,0)--(0,1);
\draw[thick,->](0,0)--(1,1);
\draw[thick,->](0,0)--(-1,0);
\draw[thick,->](0,0)--(1,0);
\draw[thick,->](0,0)--(-1,-1);
\draw[thick,->](0,0)--(0,-1);
\end{tikzpicture}
&
\begin{tikzpicture}[scale=.2, baseline=(current bounding box.center)]

\draw[thick,->](0,0)--(0,1);
\draw[thick,->](0,0)--(1,1);
\draw[thick,->](0,0)--(-1,-1);
\draw[thick,->](0,0)--(1,-1);
\end{tikzpicture}
&
\begin{tikzpicture}[scale=.2, baseline=(current bounding box.center)]

\draw[thick,->](0,0)--(0,1);
\draw[thick,->](0,0)--(1,1);
\draw[thick,->](0,0)--(-1,0);
\draw[thick,->](0,0)--(1,-1);
\end{tikzpicture}
&
\begin{tikzpicture}[scale=.2, baseline=(current bounding box.center)]

\draw[thick,->](0,0)--(0,1);
\draw[thick,->](0,0)--(1,1);
\draw[thick,->](0,0)--(-1,0);
\draw[thick,->](0,0)--(-1,-1);
\draw[thick,->](0,0)--(1,-1);
\end{tikzpicture}
&
\begin{tikzpicture}[scale=.2, baseline=(current bounding box.center)]

\draw[thick,->](0,0)--(0,1);
\draw[thick,->](0,0)--(1,1);
\draw[thick,->](0,0)--(-1,-1);
\draw[thick,->](0,0)--(0,-1);
\draw[thick,->](0,0)--(1,-1);
\end{tikzpicture}
&
\begin{tikzpicture}[scale=.2, baseline=(current bounding box.center)]

\draw[thick,->](0,0)--(0,1);
\draw[thick,->](0,0)--(1,1);
\draw[thick,->](0,0)--(1,0);
\draw[thick,->](0,0)--(-1,-1);
\draw[thick,->](0,0)--(0,-1);
\draw[thick,->](0,0)--(1,-1);
\end{tikzpicture}
\\
\begin{tikzpicture}[scale=.2, baseline=(current bounding box.center)]

\draw[thick,->](0,0)--(0,1);
\draw[thick,->](0,0)--(1,1);
\draw[thick,->](0,0)--(-1,0);
\draw[thick,->](0,0)--(1,0);
\draw[thick,->](0,0)--(-1,-1);
\draw[thick,->](0,0)--(1,-1);
\end{tikzpicture}
&
 \begin{tikzpicture}[scale=.2, baseline=(current bounding box.center)]

\draw[thick,->](0,0)--(0,1);
\draw[thick,->](0,0)--(1,1);
\draw[thick,->](0,0)--(-1,0);
\draw[thick,->](0,0)--(-1,-1);
\draw[thick,->](0,0)--(0,-1);
\draw[thick,->](0,0)--(1,-1);
\end{tikzpicture}
&
\begin{tikzpicture}[scale=.2, baseline=(current bounding box.center)]

\draw[thick,->](0,0)--(0,1);
\draw[thick,->](0,0)--(-1,0);
\draw[thick,->](0,0)--(1,0);
\draw[thick,->](0,0)--(0,-1);
\draw[thick,->](0,0)--(1,-1);
\end{tikzpicture}
&

\begin{tikzpicture}[scale=.2, baseline=(current bounding box.center)]

\draw[thick,->](0,0)--(-1,1);
\draw[thick,->](0,0)--(0,1);
\draw[thick,->](0,0)--(1,0);
\draw[thick,->](0,0)--(-1,-1);
\draw[thick,->](0,0)--(1,-1);
\end{tikzpicture}
&
\begin{tikzpicture}[scale=.2, baseline=(current bounding box.center)]

\draw[thick,->](0,0)--(-1,1);
\draw[thick,->](0,0)--(0,1);
\draw[thick,->](0,0)--(-1,0);
\draw[thick,->](0,0)--(1,0);
\draw[thick,->](0,0)--(-1,-1);
\draw[thick,->](0,0)--(1,-1);
\end{tikzpicture}
&
\begin{tikzpicture}[scale=.2, baseline=(current bounding box.center)]

\draw[thick,->](0,0)--(-1,1);
\draw[thick,->](0,0)--(0,1);
\draw[thick,->](0,0)--(-1,0);
\draw[thick,->](0,0)--(1,0);
\draw[thick,->](0,0)--(-1,-1);
\draw[thick,->](0,0)--(0,-1);
\draw[thick,->](0,0)--(1,-1);
\end{tikzpicture}
&
\begin{tikzpicture}[scale=.2, baseline=(current bounding box.center)]

\draw[thick,->](0,0)--(0,1);
\draw[thick,->](0,0)--(-1,0);
\draw[thick,->](0,0)--(0,-1);
\draw[thick,->](0,0)--(1,-1);
\end{tikzpicture}
&
\begin{tikzpicture}[scale=.2, baseline=(current bounding box.center)]

\draw[thick,->](0,0)--(0,1);
\draw[thick,->](0,0)--(-1,0);
\draw[thick,->](0,0)--(-1,-1);
\draw[thick,->](0,0)--(1,-1);
\end{tikzpicture}
&
\begin{tikzpicture}[scale=.2, baseline=(current bounding box.center)]

\draw[thick,->](0,0)--(-1,1);
\draw[thick,->](0,0)--(0,1);
\draw[thick,->](0,0)--(-1,-1);
\draw[thick,->](0,0)--(1,-1);
\end{tikzpicture}
&
\begin{tikzpicture}[scale=.2, baseline=(current bounding box.center)]

\draw[thick,->](0,0)--(-1,1);
\draw[thick,->](0,0)--(0,1);
\draw[thick,->](0,0)--(-1,0);
\draw[thick,->](0,0)--(1,-1);
\end{tikzpicture}
&
\begin{tikzpicture}[scale=.2, baseline=(current bounding box.center)]

\draw[thick,->](0,0)--(0,1);
\draw[thick,->](0,0)--(-1,0);
\draw[thick,->](0,0)--(-1,-1);
\draw[thick,->](0,0)--(0,-1);
\draw[thick,->](0,0)--(1,-1);
\end{tikzpicture}
&
\begin{tikzpicture}[scale=.2, baseline=(current bounding box.center)]

\draw[thick,->](0,0)--(-1,1);
\draw[thick,->](0,0)--(0,1);
\draw[thick,->](0,0)--(-1,-1);
\draw[thick,->](0,0)--(0,-1);
\draw[thick,->](0,0)--(1,-1);
\end{tikzpicture}
&
 \begin{tikzpicture}[scale=.2, baseline=(current bounding box.center)]

\draw[thick,->](0,0)--(-1,1);
\draw[thick,->](0,0)--(0,1);
\draw[thick,->](0,0)--(-1,0);
\draw[thick,->](0,0)--(0,-1);
\draw[thick,->](0,0)--(1,-1);
\end{tikzpicture}
&
 \begin{tikzpicture}[scale=.2, baseline=(current bounding box.center)]

\draw[thick,->](0,0)--(-1,1);
\draw[thick,->](0,0)--(0,1);
\draw[thick,->](0,0)--(-1,0);
\draw[thick,->](0,0)--(-1,-1);
\draw[thick,->](0,0)--(1,-1);
\end{tikzpicture}
&
\begin{tikzpicture}[scale=.2, baseline=(current bounding box.center)]

\draw[thick,->](0,0)--(-1,1);
\draw[thick,->](0,0)--(0,1);
\draw[thick,->](0,0)--(-1,0);
\draw[thick,->](0,0)--(-1,-1);
\draw[thick,->](0,0)--(0,-1);
\draw[thick,->](0,0)--(1,-1);
\end{tikzpicture}
&

\end{array}$

\end{trivlist}
\caption{Current classification of small step models of walks in the three quarter plane. Except for the 5 rational cases, the D-finite cases correspond to walks with a finite group. The red framed models are of different nature in the quarter and the three quarter plane.}\label{figcas34}
\end{figure}

\vspace*{0.5cm}
\paragraph{\textbf{Method and main results.}}
As mentioned in \cite{RaTr-18, Tr-19}, the study of walks avoiding a quadrant gives rise to convergence problems. Indeed, although we can easily write a functional equation for the generating series $C(x,y;t)$, unlike the quadrant case, see \cite[Sec.~4.1]{BMM}, this functional equation involves infinitely many negative and positive powers of $x$ and $y$ making the series not convergent anymore. In this article, we follow the same strategy as \cite{RaTr-18, Tr-19} and divide the three quadrants into two symmetric convex cones of opening angle $3\pi/4$ and a diagonal in between, see Figure~\ref{fig:three-quarter_split}. After making some assumptions on the step set of the walk (diagonal symmetry and no anti-diagonal jumps in \ref{main_hyp}), we derive a functional equation which can be solved. We are then able to use the tools and techniques of \cite{DHRS} to find the nature of the generating series for the models of   Figure~\ref{fig:symmetric_models_infinite}. More precisely, we prove the following, see Theorem \ref{thm2}.
\begin{figure}[t]
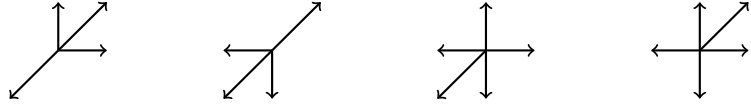

	\centering
	\begin{tabular}{cccc}
		$\ \diagr{E,NE,N,SW}\ $
		\qquad&\qquad 
		$\ \diagr{W,NE,S,SW}\ $ 
		\qquad&\qquad
		$\ \diagr{E,W,N,S,SW}\ $
		\qquad&\qquad
		$\ \diagr{E,W,N,S,NE}\ $ 
	\end{tabular}
	\caption{Diagonally symmetric step sets with infinite group and no anti-diagonal directions.}
	\label{fig:symmetric_models_infinite}
\end{figure}

\begin{thmintro}
 The following holds:
\begin{itemize}
\item Assume that the step set of the walk is the following: 
$$\begin{tikzpicture}[scale=.5, baseline=(current bounding box.center)]
\draw[thick,->](0,0)--(0,1);
\draw[thick,->](0,0)--(0,-1);
\draw[thick,->](0,0)--(1,0);
\draw[thick,->](0,0)--(-1,0);
\draw[thick,->](0,0)--(1,1);
\end{tikzpicture}$$
Then, $C(x,y;t)$ is D-algebraic. 
\item Assume that the step set of the walk is one of the following:
$$
\begin{tikzpicture}[scale=.5, baseline=(current bounding box.center)]
\draw[thick,->](0,0)--(0,1);
\draw[thick,->](0,0)--(1,1);
\draw[thick,->](0,0)--(1,0);
\draw[thick,->](0,0)--(-1,-1);
\end{tikzpicture}\quad  \quad \begin{tikzpicture}[scale=.5, baseline=(current bounding box.center)]
\draw[thick,->](0,0)--(0,-1);
\draw[thick,->](0,0)--(1,1);
\draw[thick,->](0,0)--(-1,0);
\draw[thick,->](0,0)--(-1,-1);
\end{tikzpicture}\quad \quad \begin{tikzpicture}[scale=.5, baseline=(current bounding box.center)]
\draw[thick,->](0,0)--(0,1);
\draw[thick,->](0,0)--(0,-1);
\draw[thick,->](0,0)--(1,0);
\draw[thick,->](0,0)--(-1,0);
\draw[thick,->](0,0)--(-1,-1);
\end{tikzpicture}$$
Then, $ C(x,y;t)$ is D-transcendental. More precisely, let $|\mathcal{S}|$ denote the cardinality of $\mathcal{S}$. There exists $t\in(0,1/|\mathcal{S}|)$ such that the series $x\mapsto C(x,y;t)$ (resp.~$y\mapsto C(x,y;t)$) does not satisfy any nontrivial algebraic differential equation with coefficients in $\Q(x,y,t)$.
\end{itemize}
\end{thmintro}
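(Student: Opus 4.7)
My starting point is the quadrant-like functional equation for the generating series $C(x,y;t)$ derived in Section~\ref{sec:Kernel-FctEq}. After splitting $\mathcal{C}$ into two symmetric $3\pi/4$-cones with a diagonal in between and exploiting the diagonal symmetry of $\mathcal{S}$ together with the absence of anti-diagonal steps, this equation takes the shape
\begin{equation*}
K(x,y;t)\, F(x,y;t) = x F_1(x;t) + y F_2(y;t) + c(x,y;t),
\end{equation*}
where $K(x,y;t)$ is the kernel of a (possibly weighted) quadrant-like model that inherits the diagonal symmetry, $F$ is the relevant section of $C$, and $c(x,y;t)$ is an explicit rational correction coming from the diagonal. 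A direct case check on the four models of Figure~\ref{fig:symmetric_models_infinite} shows that $K$ defines a smooth genus-one curve $\overline{E}_t \subset \mathbb{P}^1\times\mathbb{P}^1$ for generic $t\in(0,1/|\mathcal{S}|)$ and that the associated group of the walk is infinite, placing us squarely in the framework of \cite{DHRS, dreyfus2020walks, dreyfus2019length}.

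Next I would uniformize $\overline{E}_t$ by $\mathbb{C}/(\Z\omega_1+\Z\omega_2)$ and lift the two birational involutions $\iota_1,\iota_2$ (swapping the pair of roots of $K$ in $y$, respectively in $x$) to translations $\widetilde{\iota}_1,\widetilde{\iota}_2$ of the universal cover. Their composition $\tau=\widetilde{\iota}_2\circ\widetilde{\iota}_1$ is a translation whose infinite order modulo the lattice is guaranteed by the infinite-group assumption. Pulling back the functional equation along the uniformization, all unknowns become meromorphic functions on $\overline{E}_t$ and a standard manipulation reduces the analysis to a $\tau$-difference equation
\begin{equation*}
\Phi(\tau\cdot z) - \Phi(z) = b(z),
\end{equation*}
where $b(z)$ is an explicit elliptic function read off from $c(x,y;t)$ and the symmetrization, and $\Phi$ is a lift of $F_1$ or $F_2$.

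The dichotomy between D-algebraicity and D-transcendence now reduces, by the criteria of \cite{DHRS, dreyfus2019length}, to a \emph{decoupling test}: $b(z)$ admits a decomposition
\begin{equation*}
b(z) = g(\tau\cdot z) - g(z) + h(z),
\end{equation*}
with $g$ meromorphic on $\overline{E}_t$ and $h$ invariant under $\widetilde{\iota}_1$ (resp.~$\widetilde{\iota}_2$), if and only if the section $F_1$ (resp.~$F_2$) is D-algebraic in its variable. For the step set $\{N,S,E,W,NE\}$, I plan to construct such a $g$ and $h$ explicitly as $\Z$-linear combinations of translates of the Weierstrass $\wp$-function attached to the lattice, and to derive the D-algebraicity in $t$ from the resulting nonlinear differential equation satisfied by $F_1,F_2$ together with the kernel equation. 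For each of the three remaining step sets, I plan to exhibit a pole of $b(z)$ whose residue along the $\tau$-orbit cannot be killed by any $\widetilde{\iota}_i$-invariant correction; this cohomological obstruction to decoupling, combined with the criteria of \cite{DHRS}, forces the D-transcendence of $x\mapsto C(x,y;t)$ and $y\mapsto C(x,y;t)$ at a suitable $t\in(0,1/|\mathcal{S}|)$.

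The main obstacle will be the last residue bookkeeping step: the rational correction $c(x,y;t)$ coming from the three-quarter-plane splitting is substantially bulkier than in the pure quadrant case, and one must verify carefully that the diagonal contribution does not create spurious cancellations which would let a non-D-algebraic model accidentally pass the decoupling test. Once the obstruction is pinned down case by case for the three transcendental models and the explicit decoupling is written down for $\{N,S,E,W,NE\}$, the classification announced in the theorem follows.
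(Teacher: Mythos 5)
Your outline does follow the paper's route (the same three--quadrant splitting, the change of variables $\varphi$, uniformization of the resulting kernel curve, and the Galois-theoretic telescoping/residue criteria of \cite{DHRS}), but there is a genuine gap at the central step. After the splitting, the functional equation is \emph{not} of the quadrant shape $K\,F=xF_1(x)+yF_2(y)+c(x,y;t)$ that you posit: the diagonal series $D_{\varphi}(y)$ enters multiplied by a polynomial $g(x,y)$ depending on \emph{both} variables, see \eqref{eq:functional_equation_octant}. The paper's key observation (Lemma~\ref{lem3}) is that on the kernel curve $g(x,y)=\pm\frac{1}{2}\sqrt{\widehat{d}_{\varphi}(y)}$ and that $g\circ\iota_2=-g$. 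Because of this sign flip, the ``standard manipulation'' does not yield the equation $\Phi(\tau\cdot\omega)-\Phi(\omega)=b(\omega)$ for $\tau=\widetilde{\iota}_2\circ\widetilde{\iota}_1$ that you assert (note also that $\iota_1,\iota_2$ lift to $\omega\mapsto-\omega$ and $\omega\mapsto-\omega+\omega_3$, not to translations); one only gets relations of the form $r_x(\omega+\omega_3)=-r_x(\omega)+\cdots$, and the additive difference equation holds for the \emph{square} $\widetilde{\sigma}^{2}$, i.e.\ translation by $2\omega_3$ (Theorem~\ref{theo2}). Consequently all pole and residue bookkeeping must be done along $\sigma^{2}$-orbits; this is where Lemma~\ref{lem5} (if $\sigma^{2}$ has infinite order then $P\not\sim\sigma(P)$) and the explicit orbit computations imported from \cite[Sec.~6]{DHRS} are needed to show that, for the two transcendental models with $d_{1,1}\neq0$, the residues at the poles of $\widetilde{a}$ in a common orbit do not cancel. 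Your proposal does not supply these orbit facts, and without them the ``cohomological obstruction'' cannot be verified.

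Two further steps are underspecified. For the model $\{N,S,E,W,NE\}$, the decoupling rests on the orbit identity $\sigma^{4}(P_1)=P_2$ valid for \emph{all} $t$ (Example~\ref{ex1}), and D-algebraicity in $t$ does not follow from ``the resulting nonlinear differential equation together with the kernel equation'': the paper must prove that the periods $\omega_1,\omega_2,\omega_3$, the uniformization, the poles, residues, and the telescoping function are all $\partial_t$-algebraic (via \cite{bernardi2017counting}) before concluding that $C(x,y;t)$ is D-algebraic in $t$ as well. For the transcendental models, the statement requires a specific $t\in(0,1/|\mathcal{S}|)$; since the order of $\sigma$ depends on $t$, a ``direct case check for generic $t$'' is not enough, and the paper invokes the countability argument of \cite[Prop.~2.6]{DHRS} to produce a transcendental $t$ at which the group is infinite.
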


We emphasize  the fact that for the four models of Figure \ref{fig:symmetric_models_infinite}, the generating series is of the same nature in the quarter and the three quarter plane. In the same vein as the remarks of Bousquet-M\'elou in~\cite[Sec.~7.2]{BM-16}, we make the following conjecture
\begin{conj}
Suppose that the step set $\mathcal S$ is not included in any half-space with boundary passing through the origin. The generating series of walks in the three quarter plane with step set $\mathcal{S}$ is of the same nature as the generating series of walks in the quarter plane with step set $\mathcal{S}$. 
\end{conj}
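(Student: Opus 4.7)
The plan is to follow the strategy of~\cite{RaTr-18, Tr-19}. Exploiting the diagonal symmetry and the absence of anti-diagonal jumps shared by the four step sets of Figure~\ref{fig:symmetric_models_infinite}, I split $\mathcal{C}$ into two convex cones of opening angle $3\pi/4$ meeting along the bisecting diagonal, and decompose $C(x,y;t)$ into a diagonal contribution plus (by symmetry, twice) the generating series of walks ending in one of those cones. A linear change of coordinates folds a $3\pi/4$ cone onto a quarter plane and produces a new trivariate generating series $\widetilde C(x,y;t)$, related to $C$ by a rational transformation that preserves D-algebraicity and D-transcendence over $\Q(x,y,t)$. The series $\widetilde C$ will satisfy a quadrant-type kernel equation
\begin{equation*}
K(x,y;t)\,\widetilde C(x,y;t) \;=\; c_1(x,t)\,\widetilde C(x,0;t)+c_2(y,t)\,\widetilde C(0,y;t)+c_0(x,y;t),
\end{equation*}
whose kernel $K$ is that of a diagonally symmetric, genus one, infinite-group (weighted) model.

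Once the functional equation is in this quadrant form, I apply the Galois-theoretic framework of~\cite{DHRS,dreyfus2020walks,dreyfus2019length}. Uniformize the kernel curve $\{K=0\}$ by a complex torus $\C/(\Z\omega_1+\Z\omega_2)$ via Weierstrass-type functions and lift $\widetilde C(x,0;t)$ to a function $r_x$ on the universal cover. The horizontal and vertical involutions of the kernel become translations on the torus, and their composition acts by translation by a half-period $\omega_3$; the kernel equation then rewrites as an inhomogeneous difference equation
\begin{equation*}
r_x(\omega+\omega_3)-r_x(\omega)=b(\omega),
\end{equation*}
where $b$ is an explicit elliptic function built from the step set and $t$. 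The criteria of~\cite{DHRS,dreyfus2020walks} translate the D-algebraicity of $x\mapsto\widetilde C(x,0;t)$ (and, using the kernel equation, of $\widetilde C$ itself, hence of $C$) into whether the residues of $b$ at its poles on the torus satisfy a specific telescoping relation, while D-transcendence corresponds to a strong, differentially independent failure of that relation.

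For the step set $\{N,S,E,W,NE\}$, I expect the explicit computation of $b(\omega)$ to reveal enough additional symmetry that its residue datum telescopes, producing an explicit algebraic differential equation for $\widetilde C(x,0;t)$ and therefore for $C(x,y;t)$, in the spirit of the D-algebraic quadrant cases of~\cite{bernardi2017counting}. For each of the other three step sets, I would compute the poles and residues of $b$ and show that they violate the telescoping condition in a differentially independent way; following~\cite{DHRS,dreyfus2017differential,dreyfus2019length} this produces $t\in(0,1/|\mathcal{S}|)$ (chosen so that $\tau=\omega_2/\omega_1$ is generic and the relevant series converge) at which $x\mapsto C(x,y;t)$ and $y\mapsto C(x,y;t)$ satisfy no nontrivial algebraic differential equation with coefficients in $\Q(x,y,t)$.

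The main obstacle I expect is the \textbf{transfer step} from the three-quarter plane to the quadrant-type equation: one must carefully separate the diagonal contribution, justify that the rational change of coordinates does produce a bona fide functional equation with convergent coefficients, and check that this rational transformation preserves both D-algebraicity and D-transcendence jointly in $(x,y,t)$, so that the Galois-theoretic criterion applied to $\widetilde C$ transfers to $C$. A secondary obstacle is the explicit residue computation on the torus for each of the four models, which must distinguish the unique D-algebraic case from the three D-transcendental ones; this is the step where a fortuitous vanishing in the first model must be matched against a provable non-vanishing in the other three.
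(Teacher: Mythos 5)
There is a genuine gap: the statement you are asked to prove is the paper's \emph{Conjecture}, which concerns \emph{every} step set $\mathcal S$ not contained in a half-space through the origin, whereas your entire argument rests on the diagonal symmetry and the absence of anti-diagonal steps, i.e.\ on Assumption~\ref{main_hyp}. That hypothesis is exactly what makes the decomposition of Figure~\ref{fig:three-quarter_split} work: symmetry gives $U(x,y)=L(y,x)$, reducing to two unknowns, and forbidding $d_{1,-1},d_{-1,1}$ prevents the change of variables $\varphi(x,y)=(xy,x^{-1})$ from creating large steps. For a general step set in the conjecture (asymmetric models, models with anti-diagonal steps, finite-group models, genus-zero models), these reductions fail and your strategy does not even produce a tractable functional equation; the conjecture also requires matching the known quarter-plane nature case by case, including the D-finite and algebraic finite-group cases, which your Galois-theoretic argument does not touch. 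In other words, what you sketch is (a version of) the paper's Theorem~\ref{thm2} for the four models of Figure~\ref{fig:symmetric_models_infinite}; the paper itself offers no proof of the conjecture and explicitly leaves it open, so no amount of polishing of this sketch closes the gap between the four symmetric models and the general statement.

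Even restricted to those four models, two technical points in your sketch are inaccurate and would need repair. First, the equation obtained after $\varphi$ is not a genuine quadrant equation in $\widetilde C(x,0;t)$ and $\widetilde C(0,y;t)$: the unknowns are the section $C_{0-}(x)$ of the original series and the diagonal series $D_{\varphi}(y)$, and the coefficient $g(x,y)$ of $D_{\varphi}(y)$ depends on \emph{both} variables; one needs Lemma~\ref{lem3} (on the kernel curve $g$ equals $\pm\tfrac12\sqrt{\widehat d_\varphi(y)}$ and changes sign under $\iota_2$) to make the continuation work. Second, because of this sign change the lifted function $r_x$ satisfies an inhomogeneous equation for the \emph{square} $\widetilde\sigma^{2}$ (shift by $2\omega_3$), not for $\widetilde\sigma$ as you wrote (see Theorem~\ref{theo2}); the pole/residue criteria of \cite{DHRS} must therefore be applied to $\sigma^{2}$-orbits, and the D-algebraic case requires the explicit orbit relation $P_1=\sigma^{2k}(P_2)$ together with a construction showing $(\partial_\omega,\partial_t)$-algebraicity, not merely a ``telescoping of residues.''
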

Note that for all the few models that have already been solved, the methods used in the quarter plane and the three quarter plane are similar, yet much more difficult to implement in three quarter plane case.

\vspace*{0.5cm}
\paragraph{\textbf{Structure of the paper.}} 
In Section~\ref{sec:Kernel-FctEq}, we start with the introduction of an important object called the kernel of the walk denoted by $K(x,y)$.  We further derive various functional equations satisfied by the generating series of the walks.  Finally, after presenting some properties of the kernel curve, we recall the notion of the group of the walk.  This group only depends on the steps of the walk and is generated by two bi-rational transformations. 
In Section~\ref{sec:AnalyticContinuation}, we first uniformize the elliptic curve defined by the zeros of the kernel. We analytically continue the generating series on the kernel curve and then, after parametrization, on the whole complex plane $\C$. 
In Section~\ref{sec:DiffTranscendance}, we study the nature of generating series corresponding to the four step sets of Figure \ref{fig:symmetric_models_infinite}. \\\par

\textbf{Acknowledgment.} The authors want to warmly thank the referees for their  detailed and helpful comments. The authors also thank Samuel Simon for his help with the English language, Kilian Raschel, and Manuel Kauers,  for their suggestions.

\section{Kernel and functional equations}
\label{sec:Kernel-FctEq}

\subsection{Weighted walks}
In this paper, we are going to consider the more general context of weighted walks.
More precisely, let $(d_{i,j})_{(i,j)\in\{0,\pm 1\}^{2}}$ be a family of elements of $\Q\cap [0,1]$ such that $\sum_{i,j} d_{i,j}=1$.    We  consider weighted models of  walks in the three quarter plane $\mathcal C$ satisfying the following properties: 
\begin{itemize} 
\item the walks start at $(0,0)$; 
\item for $(i,j)\in\{0,\pm 1\}^{2}\backslash\{(0,0)\}$ (resp.~$(0,0)$), the  element   $d_{i,j}$ is  a  weight of the step $(i,j)$  and can be viewed as  the probability for the walk  to go in the direction $(i,j)$ (resp.~to stay  at the same position).
\end{itemize}

The set of steps  of the weighted walk  is  the 
set of nearest neighbors with nonzero weight, that is, 
\[
\mathcal{S}=\{(i,j) \in\{0,\pm 1\}^{2} | d_{i,j} \neq 0 \}.
\]

If $d_{0,0}=0$ and if the nonzero $d_{i,j}$ all have the same value $1/|\mathcal{S}|$, we say the model is unweighted. \par 
The {\it weight of the walk} is defined as the product of the weights of its component steps.
From now on, we replace the definition of $c_{i,j}(n)$ by the {sum of the weights of all walks reaching}  the position $(i,j)\in \mathcal{C}$ from the initial position $(0,0)$ after $n$ steps. We also replace 
 $C(x,y;t)$ by the following trivariate generating series
$$
C(x,y;t):=\displaystyle \sum_{n=0}^{\infty}\sum_{(i,j)\in \mathcal{C}}c_{i,j}(n)x^{i}y^{j}t^{n}.
$$

\begin{rem}
Consider an unweighted walk with generating series $C(x,y;t)$.
Then, the series $C(x,y;t|\mathcal{S}|)$, represents the generating series in the introduction. We stress  the fact that the latter has the same nature as $C(x,y;t)$, so  this abuse of notation has no importance in what follows. \end{rem}

\begin{rem}For simplicity, we assume that the weights $d_{i,j}$ belong to  $\Q$. However, we would like to mention that any of the arguments and statements  below will hold with arbitrary real weights in $[0,1]$. One just needs to  replace the field $\Q$ with the field $\Q(d_{i,j})$. \end{rem}

From now on, we fix $0<t<1$. To simplify notation, we may sometimes omit the dependencies in $t$. In particular,  $C(x,y;t)$ will also be denoted by $C(x,y)$.

\subsection{Kernel of the walk}

Usually, the starting point is to reduce the enumeration problem of walks to the resolution of a functional equation. Before deriving this equation satisfied by the generating series $C(x,y)$, let us present the kernel, a crucial object, which depends on the step set.

The polynomial
\begin{equation} 
	\label{eq:Kernel}
	K(x,y)=xy\left[t\sum_{(i,j)\in\mathcal{S}}d_{i,j}x^{i}y^{j}-1 \right]
\end{equation}
is called the kernel\index{kernel} of the walk. It encodes the elements of $\mathcal{S}$ (the steps of the walk). We can rewrite it as:
\begin{equation}
	\label{eq:kernel_expanded}
	K(x,y) = a(x)y^2 + b(x)y + c(x)= \widehat{a}(y)x^2 + \widehat{b}(y)x + \widehat{c}(y).
\end{equation}

We also define the discriminants in $x$ and $y$ of the kernel \eqref{eq:Kernel}:
\begin{equation}
	\label{eq:discriminants}
	d(x) = b(x)^2 - 4a(x)c(x) \qquad \text{and}\qquad \widehat{d}(y) = \widehat{b}(y)^2 - 4\widehat{a}(y)\widehat{c}(y).
\end{equation}

Let us fix once for all a determination of the complex logarithm $\log$,  and let us consider the following determination of the complex square root  $\sqrt{z}=\exp(\log(z)/2)$.  Let $Y_{\pm} (x)$ (resp.~$X_{\pm}(y)$) be the algebraic functions defined by the relation $K(x,Y_{\pm}(x))=0$ (resp.~$K(X_{\pm}(y),y)=0$). Obviously by \eqref{eq:kernel_expanded} and \eqref{eq:discriminants} we have
\begin{equation}
	\label{eq:algebraic_expressions_Y_X}
	Y_{\pm}(x)=\frac{-b(x)\pm\sqrt{d(x)}}{2a(x)}\qquad \text{and}\qquad X_{\pm}(y)=\frac{-\widehat{b}(y)\pm\sqrt{\widehat{d}(y)}}{2\widehat{a}(y)}.
\end{equation}

\subsection{Functional equations}

From now on, we impose the following assumption:

\begin{enumerate}[label=\textcolor{red}{\textbf{(A1)}},ref={\rm (A1)}]
     \item\label{main_hyp} We forbid anti-diagonal steps: for all $(i,j)\in \mathcal{S}$, $d_{i,j}=d_{j,i}$, and $d_{1,-1}=d_{-1,1}=0$.
\end{enumerate} 
As in the case of the generating series of quadrant walks \cite[Sec.~4.1]{BMM}, using a recursive construction of a walk by adding a new step at the end of the walk at each stage, we can easily derive a first functional equation for $C(x,y)$, see \cite[Sec.~2]{BM-16} and \cite[Sec.~2.1]{RaTr-18}. We obtain

\begin{multline}
	\label{eq:functional_equation_3/4}
C(x,y)=1+t\sum_{(i,j)\in\mathcal S}d_{i,j}x^{i}y^{j}C(x,y)-t\sum_{i\in \{0,\pm 1\}}d_{i,-1}x^{i}y^{-1}C_{-0}(x^{-1})
\\-t\sum_{j\in \{0,\pm 1\}}d_{-1,j}x^{-1}y^{j}C_{0-}(y^{-1})
	-td_{-1,-1}x^{-1}y^{-1}C_{0,0},
\end{multline}
where
\begin{align*}
     C_{-0}(x^{-1})&= \sum_{\substack{n\geq0 \\i  > 0}} c_{-i,0}(n)x^{-i}t^{n},\\
     C_{0-}(y^{-1}) &= \sum_{\substack{n\geq0 \\j  > 0}} c_{0,-j}(n)y^{-j}t^{n},\\
     C_{0,0}&=\sum_{n\geq0}c_{0,0}(n)t^{n}.
\end{align*}
Multiplying both sides by $-xy$ we get 
\begin{equation}\label{eq1}
K(x,y)C(x,y)=c(x)C_{-0}(x^{-1})+\widehat{c}(y)C_{0-}(y^{-1})+td_{-1,-1}C_{0,0}-xy.
\end{equation}
Because of the presence of infinitely many terms with positive and negative powers of $x$ and $y$, the resolution of the functional equation \eqref{eq:functional_equation_3/4} by algebraic substitutions or evaluations at well-chosen complex points seems complicated, as the series are no longer convergent. Therefore, we use the same strategy as in \cite{RaTr-18}: we split the three quadrant into three parts and decompose the generating series $C(x,y)$ into a sum of three generating series

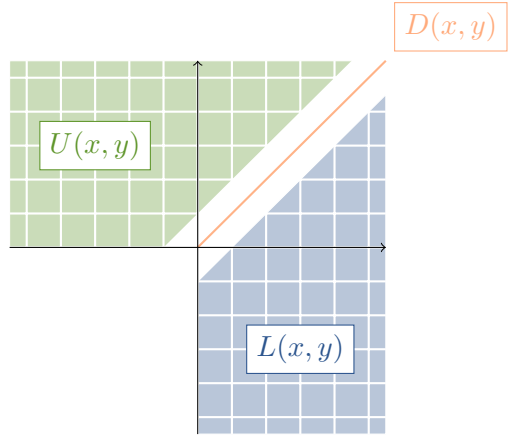
\begin{figure}[t]
	\centering
	\begin{tikzpicture}
		\begin{scope}[scale=0.45]
			\tikzstyle{quadri}=[rectangle,draw,fill=white]
			\draw[white, fill=dblue!30] (0,-1) -- (5.5,4.5) -- (5.5,-5.5) -- (0,-5.5);
			\draw[white, fill=dgreen!30] (-1,0) -- (-5.5,0) -- (-5.5,5.5) -- (4.5,5.5);
			\draw[white, thick] (0,-5.5) grid (5.5,5.5);
			\draw[white, thick] (-5.5,0) grid (0,5.5);
			\draw[Apricot!90, thick] (0,0) -- (5.5,5.5);
			\draw[->] (0,-5.5) -- (0,5.5);
			\draw[->] (-5.5,0) -- (5.5,0);
			\node[dblue!90,quadri] at (3,-3) {${L}(x,y)$};
		\node[dgreen!90, quadri] at (-3,3) {${U}(x,y)$};
		\node[Apricot!100, quadri] at (7.4,6.5) {${D}(x,y)$};
		\end{scope}
	\end{tikzpicture}
	\caption{Decomposition of the three quadrant cone into two cones of opening angle $\frac{3\pi}{4}$. The green part corresponds to $U$, the blue part to $L$, and the orange part to $D$.}
	\label{fig:three-quarter_split}
\end{figure}

\begin{equation}
	\label{eq:equation_cut3parts}
	C(x,y)=L(x,y)+D(x,y)+U(x,y),
\end{equation}
where 
\begin{equation*}
	L(x,y)=\sum\limits_{\substack{i \geq 0 \\  j\leq i-1 \\n\geq 0}}c_{i,j}(n)x^i y^j t^n, \quad
	D(x,y)=\sum\limits_{\substack{i\geq 0 \\ n\geq 0}}c_{i,i}(n)x^i y^i t^n, \quad
	U(x,y)=\sum\limits_{\substack{j \geq 0 \\ i\leq j-1 \\n\geq 0}}c_{i,j}(n)x^i y^j t^n.
\end{equation*}
The generating series $L(x,y)$ (resp.~$D(x,y)$, and $U(x,y)$) represents walks ending in the lower cone $\{ i\geq 0, j\leq i-1\}$ (resp.~the diagonal $\{i=j\}$, and the upper cone $\{j\geq 0, i\leq j-1\}$), see Figure~\ref{fig:three-quarter_split}.\par 

Thanks to the diagonal symmetry of the step set and the fact that the starting point $(0,0)$ lies on the diagonal, we have $U(x, y) = L(y, x)$ and $C(x, y)$ can be written as the sum $L(x, y)+D(x, y)+L(y, x)$ of two unknown generating series. 
\begin{lem}
	\label{lem:functional_equation_sym}
	For any step set which satisfies \ref{main_hyp} and any walk that starts at $(0,0)$, one has 
\begin{multline}\label{eq:functional_equation_sym}
		K(x,y)L(x,y)=-\frac{xy}{2}+\widehat{c}(y)C_{0-}(y^{-1}) + \frac{t}{2}d_{-1,-1}C_{0,0}
		\\
		-xy\left(-\frac{1}{2}+t\left(\frac{1}{2}(d_{1,1}xy+d_{0,0}+d_{-1,-1}x^{-1}y^{-1})+ d_{0,-1}y^{-1} + d_{1,0}x \right) \right)D(x,y).
	\end{multline}
\end{lem}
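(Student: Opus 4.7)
The plan is to derive a direct functional equation for $L(x,y)$ by a last-step decomposition restricted to walks ending in the lower cone, and then to eliminate an auxiliary ``just below the diagonal'' series by exploiting diagonal symmetry together with~\eqref{eq1}.

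First I would set up the recurrence $c_{i,j}(n+1) = \sum_{(\alpha,\beta)} d_{\alpha,\beta}\, c_{i-\alpha,j-\beta}(n)$, valid whenever $(i-\alpha,j-\beta)\in\mathcal{C}$, multiply by $x^i y^j t^{n+1}$, and sum over $(i,j)\in L$ and $n\geq 0$. For each step $(\alpha,\beta)\in\mathcal{S}$ (seven of them under~\ref{main_hyp}), I would enumerate the set of previous positions $(i',j')\in\mathcal{C}$ with $(i'+\alpha,j'+\beta)\in L$; each such set expresses as a simple combination of $L(x,y)$, $D(x,y)$, the boundary series $C_{0-}(y^{-1})$, its leading coefficient $C_{0,-1}(t):=\sum_n c_{0,-1}(n)t^n$, and the auxiliary series $M(x,y):=\sum_{i\geq 1,\,n} c_{i,i-1}(n)\,x^i y^{i-1}t^n$ counting walks ending one row below the main diagonal. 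Multiplying the resulting identity by $-xy$ yields a preliminary equation
\begin{equation*}
K(x,y)L(x,y) = -t\,Q_L(x,y)\,D(x,y) + t\,Q_U(x,y)\,M(x,y) + \widehat c(y)\,C_{0-}(y^{-1}) + t\,d_{1,0}\,xy\,C_{0,-1}(t),
\end{equation*}
where $Q_L(x,y):=d_{1,0}x^2y + d_{0,-1}x$ and $Q_U(x,y):=d_{1,0}xy^2 + d_{0,-1}y = Q_L(y,x)$.

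The main obstacle is to eliminate $M(x,y)$, which does not appear in~\eqref{eq:functional_equation_sym}. Here I would use diagonal symmetry: since $d_{i,j}=d_{j,i}$ and $(0,0)\in D$, the counts satisfy $c_{i,i-1}(n)=c_{i-1,i}(n)$. Setting $M'(x,y):=\sum_{i\geq 0,\,n} c_{i,i+1}(n)\,x^i y^{i+1}t^n$, one obtains by direct substitution $y\,M(x,y) = x\,M'(x,y)$, and combined with the obvious identity $Q_L(x,y)=(x/y)\,Q_U(x,y)$, this gives the key relation
\begin{equation*}
Q_U(x,y)\,M(x,y) = Q_L(x,y)\,M'(x,y).
\end{equation*}

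To conclude, I would apply the swap $x\leftrightarrow y$ to the preliminary equation, using $L(y,x)=U(x,y)$, $M(y,x)=M'(x,y)$, $K(y,x)=K(x,y)$, $D(y,x)=D(x,y)$, to obtain an analogous equation for $K(x,y)U(x,y)$. Subtracting the two versions, the $M$/$M'$-terms cancel by the identity above and the $C_{0,-1}(t)$-tails also cancel (being symmetric), giving a closed expression for $K(x,y)(L(x,y)-U(x,y))$ in terms of $D$ and the two boundary series. Independently, since $C_{-0}=C_{0-}$ and $\widehat c=c$ by diagonal symmetry, equation~\eqref{eq1} with $C=L+D+U$ rearranges to
\begin{equation*}
K(x,y)\bigl(L(x,y)+U(x,y)\bigr) = c(x)\,C_{0-}(x^{-1}) + c(y)\,C_{0-}(y^{-1}) + t\,d_{-1,-1}\,C_{0,0} - xy - K(x,y)\,D(x,y).
\end{equation*}
Adding the two equations, dividing by $2$, and re-expanding the coefficient of $D(x,y)$ via $K(x,y) = t(Q_D+Q_L+Q_U)-xy$ with $Q_D(x,y):=d_{1,1}x^2y^2+d_{0,0}xy+d_{-1,-1}$, yields precisely~\eqref{eq:functional_equation_sym}.
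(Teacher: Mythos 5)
Your derivation is correct, and it is worth noting that the paper itself gives no self-contained argument for Lemma~\ref{lem:functional_equation_sym}: it simply defers to \cite[Lem.~1 and Appx.~C]{RaTr-18}. I checked your steps. Under Assumption~\ref{main_hyp} the last-step decomposition over the seven admissible steps is exactly as you claim, the crucial point being that $d_{1,-1}=0$ forbids any transition from the upper cone into the lower cone, so each set of admissible previous positions is a combination of $L$, $D$, $C_{0-}(y^{-1})$, $C_{0,-1}(t)$ and $M$ only (you should state this use of \ref{main_hyp} explicitly, since it is where the hypothesis enters). The preliminary equation then comes out with the coefficient $t\bigl(d_{0,1}xy^{2}+d_{-1,0}y\bigr)$ in front of $M$, which equals $tQ_U$ by the diagonal symmetry of the weights, in agreement with what you wrote; the relation $yM=xM'$, hence $Q_U M=Q_L M'$, is correct; and adding your subtracted pair to the rearranged \eqref{eq1} and halving does reproduce \eqref{eq:functional_equation_sym}: one checks that $\tfrac12\bigl(t(Q_L-Q_U)+K(x,y)\bigr)$ coincides, under \ref{main_hyp}, with the bracketed coefficient of $D(x,y)$ in the statement. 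Compared with the cited proof of Raschel--Trotignon, which performs last-step decompositions for the cone and handles the diagonal contribution through its own equation before eliminating the sub-diagonal series by symmetry, your variant eliminates $M$ by combining the $x\leftrightarrow y$-swapped lower-cone equation with the global identity \eqref{eq1} for $C=L+D+U$; this shortens the bookkeeping at the mild cost of invoking \eqref{eq1}, which is in any case already established in the paper, so both routes are equally legitimate.
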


\begin{proof}
This is basically \cite[Lem.~1]{RaTr-18} and may be straightforwadly deduced from the proof of \cite[Appx.~C]{RaTr-18}.
\end{proof}

As in \cite{RaTr-18}, let us perform on \eqref{eq:functional_equation_sym} the change of coordinates 
\begin{equation}\label{eq:change_var}
	\varphi(x,y)=(xy, x^{-1}).
\end{equation}
Note that it is bijective with inverse $\varphi^{-1}(x,y)=(y^{-1},xy)$. This change of coordinates $\varphi$ acts on monomials and the plane is transformed via $(k,l)\rightarrow (k-l,k)$. Hence $\varphi$ changes $L$ into the positive quadrant (see Figure~\ref{fig:half-plane_split}).
If we also multiply the functional equation~\eqref{eq:functional_equation_sym} by $x$, we get by \cite[(15)]{RaTr-18}, the functional equation

	\begin{equation}
	\label{eq:functional_equation_octant}
	K_{\varphi}(x,y)L_{\varphi}(x,y)
	=xf(x)C_{0-}(x)+xg(x,y)D_{\varphi}(y)+\frac{t}{2}d_{-1,-1}xC_{0,0}-\frac{xy}{2},
\end{equation}
where 
$$\begin{array}{lll}
K_{\varphi}(x,y)& =&xK(\varphi(x,y))=xy\left(t\displaystyle\sum_{(i,j)\in \mathcal{S}}d_{i,j}x^{i-j}y^{i}-1\right)=xy\left(t\displaystyle\sum_{(i,j)\in \{ 0,\pm 1\}^{2}}d_{j,j-i}x^{i}y^{j}-1\right),\\
f(x)&= &td_{-1,0}+txd_{-1,-1},\\
 g(x,y)&=&-y\Big(-\frac{1}{2}+t\Big(\frac{1}{2}(d_{1,1}y+d_{0,0}+d_{-1,-1}y^{-1})+ d_{0,-1}x + d_{1,0}xy \Big) \Big),\\
 L_{\varphi}(x,y)& = &L(\varphi(x,y)) = \sum\limits_{\substack{i \geq 1 \\  j\geq 0 \\n\geq 0}}c_{j,j-i}(n)x^i y^j t^n,\\
C_{0-}(x)&=&\sum\limits_{\substack{j> 0 \\ n\geq 0}} c_{0,-j}(n)x^{j}t^{n},\\
 D_{\varphi}(y) &=& D(\varphi(x,y)) = \sum\limits_{\substack{i\geq 0 \\ n\geq 0}} c_{i,i}(n)y^i t^n.
\end{array}$$

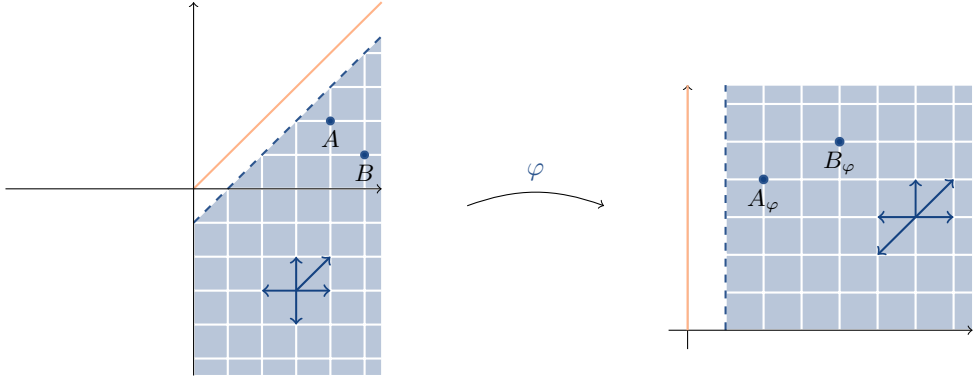
\begin{figure}[t]
	\centering
	\begin{tikzpicture}
		\begin{scope}[scale=0.45, xshift=0cm, yshift=2.5cm]
			\draw[white, fill=dblue!30] (0,-1) -- (5.5,4.5) -- (5.5,-5.5) -- (0,-5.5);
			\draw[white, thick] (0,-5.5) grid (5.5,5.5);
			\draw[white, thick] (-5.5,0) grid (0,5.5);
			\draw[Apricot!90, thick] (0,0) -- (5.5,5.5);
			\draw[thick, dblue!90, fill=dblue] (5,1) circle (0.10 cm);
			\node[below] at (5,1) {\footnotesize{$B$}};
			\draw[thick, dblue!90, fill=dblue] (4,2) circle (0.10 cm);
			\node[below] at (4,2) {\footnotesize {$A$}};
			\draw[dashed, dblue!90, thick] (0,-1) -- (5.5,4.5);
			\draw[->] (0,-5.5) -- (0,5.5);
			\draw[->] (-5.5,0) -- (5.5,0);
			\draw[thick, dblue, ->] (3,-3) -- (2,-3);
			\draw[thick, dblue, ->] (3,-3) -- (4,-3);
			\draw[thick, dblue, ->] (3,-3) -- (3,-4);
			\draw[thick, dblue, ->] (3,-3) -- (4,-2);
			\draw[thick, dblue, ->] (3,-3) -- (3,-2);
		\end{scope}

		\begin{scope}[scale=0.3, xshift= 15cm]
			\draw[->] (-3,3) to[out=20,in=160] (3,3);
			\draw [] (0,4.5) node[dblue!90]{\small {$\varphi$}};
		\end{scope}		

		\begin{scope}[scale=0.5, xshift=13cm, yshift=-1.5cm]
			\draw[white, fill=dblue!30] (1,0) -- (1,6.5) -- (7.5,6.5) -- (7.5,0);
			\draw[white, thick] (0,0) grid (7.5,6.5);
			\draw[dashed, dblue!90, thick] (1,0) -- (1,6.5);
			\draw[->] (0,-0.5) -- (0,6.5);
			\draw[->] (-0.5,0) -- (7.5,0);
			\draw[Apricot!90, thick] (0,0) -- (0,6.5);
			\draw[thick, dblue, ->] (6,3) -- (5,3);
			\draw[thick, dblue, ->] (6,3) -- (6,4);
			\draw[thick, dblue, ->] (6,3) -- (7,4);
			\draw[thick, dblue, ->] (6,3) -- (7,3);
			\draw[thick, dblue, ->] (6,3) -- (5,2);
			\draw[thick, dblue!90, fill=dblue] (2,4) circle (0.10 cm);
			\node[below] at (2,4) {\footnotesize {$A_{\varphi}$}};
			\draw[thick, dblue!90, fill=dblue] (4,5) circle (0.10 cm);
			\node[below] at (4,5) {\footnotesize{$B_{\varphi}$}};
		\end{scope}
	\end{tikzpicture}
	\caption{After performing the change of coordinates $\varphi$ defined in 
	\eqref{eq:change_var},
	the lower octant is transformed into the right quadrant. The weights of the walk are also changed by $\varphi$, see Lemma~\ref{lem1}.}
	\label{fig:half-plane_split}
\end{figure}
 Note that $f$ and $g$ are both polynomials. The multiplication by $x$ in  \eqref{eq:functional_equation_octant} permits us to find a kernel $K_{\varphi}(x,y)$ which is of the same form as the original kernel $K(x,y)$. More precisely, the following lemma holds and is a straightforward consequence of the expression of $K_{\varphi}(x,y)$.
 
 \begin{lem}\label{lem1}
The polynomial $K_{\varphi}(x,y)$ corresponds to a kernel of a weighted walk with weights $d^{\varphi}_{i,j}$, where 
$$\begin{array}{lllllllll}
d^{\varphi}_{-1,1}&=&0,&d^{\varphi}_{0,1}&=&d_{1,1},&d^{\varphi}_{1,1}&=&d_{1,0},\\
d^{\varphi}_{-1,0}&=&d_{0,1},&d^{\varphi}_{0,0}&=&d_{0,0},&d^{\varphi}_{1,0}&=&d_{0,-1},\\
d^{\varphi}_{-1,-1}&=&d_{-1,0},&d^{\varphi}_{0,-1}&=&d_{-1,-1},&d^{\varphi}_{1,-1}&=&0.
\end{array}$$
 \end{lem}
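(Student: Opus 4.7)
The plan is to take the already-computed explicit expression of $K_{\varphi}(x,y)$ displayed just before the lemma and simply reindex the sum so that it becomes visibly the kernel of a weighted walk with step set in $\{0, \pm 1\}^2$. The whole proof is bookkeeping; the only non-formal ingredient is hypothesis \ref{main_hyp}.

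First, starting from
$$K_{\varphi}(x,y)=xy\left(t\sum_{(a,b)\in\{0,\pm 1\}^2} d_{a,b}\,x^{a-b}y^{a}-1\right),$$
I would perform the change of summation indices $(i,j):=(a-b,\,a)$. This map is a bijection $\Z^2\to\Z^2$ with inverse $(a,b)=(j,\,j-i)$, so each monomial $x^iy^j$ comes from a unique $(a,b)\in\{0,\pm 1\}^2$ and carries the coefficient $d_{j,j-i}$.

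Next, I would check that, thanks to assumption \ref{main_hyp}, the support of the new sum is contained in $\{0,\pm 1\}^2$. Indeed, among the nine pairs $(a,b)\in\{0,\pm 1\}^2$, the image $(a-b,a)$ fails to lie in $\{0,\pm 1\}^2$ only for the two anti-diagonal pairs $(1,-1)\mapsto(2,1)$ and $(-1,1)\mapsto(-2,-1)$, and precisely those weights vanish by \ref{main_hyp}. Defining $d^{\varphi}_{i,j}:=d_{j,j-i}$ whenever $j-i\in\{0,\pm 1\}$ and $d^{\varphi}_{i,j}:=0$ otherwise, the rewriting yields
$$K_{\varphi}(x,y)=xy\left(t\sum_{(i,j)\in\{0,\pm 1\}^2} d^{\varphi}_{i,j}\,x^{i}y^{j}-1\right),$$
which is the announced form. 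The nine entries of the table are then read off by running through the nine pairs $(i,j)\in\{0,\pm 1\}^2$ and evaluating $d_{j,j-i}$.

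Finally, to confirm that these $d^{\varphi}_{i,j}$ really are weights of a weighted walk in the sense of the paper, I would observe that each $d^{\varphi}_{i,j}$ equals either some $d_{a,b}$ or $0$, hence lies in $\Q\cap[0,1]$, and that
$$\sum_{(i,j)\in\{0,\pm 1\}^2} d^{\varphi}_{i,j}=\sum_{(a,b)\in\{0,\pm 1\}^2} d_{a,b}=1,$$
the first equality holding because the reindexing is a bijection on the effective supports. There is no real obstacle in this proof; the only thing one must not forget is to invoke \ref{main_hyp} to rule out the two anti-diagonal pairs which would otherwise push the support outside $\{0,\pm 1\}^2$.
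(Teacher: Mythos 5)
Your proof is correct and is essentially the paper's own argument: the paper simply reads the weights off the reindexed expression $xy\bigl(t\sum_{(i,j)\in\{0,\pm 1\}^2}d_{j,j-i}x^iy^j-1\bigr)$ displayed just before the lemma, calling the conclusion a straightforward consequence. You merely make the bookkeeping explicit — the bijection $(a,b)\mapsto(a-b,a)$, the use of \ref{main_hyp} to kill the two anti-diagonal terms that would leave $\{0,\pm 1\}^2$, and the check that the new weights are admissible — which is exactly what "straightforward consequence" hides.
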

\begin{rem}
The second part of Assumption~\ref{main_hyp} $d_{1,-1}=d_{-1,1}=0$ avoids the appearance of large steps after the change of variable $\varphi$ defined in~\eqref{eq:change_var}. For example, $d_{1,-1}$ would be changed into $d^{\varphi}_{2,1}$, see Figure~\ref{fig:half-plane_split}.
\end{rem}

By Lemma \ref{lem1}, $K_{\varphi}(x,y)$ has degree at most two in $x$ and degree at most two in $y$. We may then write 
$$
K_{\varphi}(x,y) = a_{\varphi}(x)y^2 + b_{\varphi}(x)y + c_{\varphi}(x) =\widehat{a}_{\varphi}(y)x^2 + \widehat{b}_{\varphi}(y)x + \widehat{c}_{\varphi}(y).
$$
When $K_{\varphi}(x,y)$ has exactly degree two in $x$ and $y$, we also define the discriminants in $x$ and $y$ of the kernel $K_{\varphi}(x,y)$:
$$
d_{\varphi}(x) = b_{\varphi}(x)^2 - 4a_{\varphi}(x)c_{\varphi}(x)	 \qquad \text{and}\qquad \widehat{d}_{\varphi}(y) = \widehat{b}_{\varphi}(y)^2 - 4\widehat{a}_{\varphi}(y)\widehat{c}_{\varphi}(y).
$$

We then have  $K_{\varphi}(x,Y_{\varphi \pm}(x))=0$ (resp.~$K_{\varphi}(X_{\varphi \pm}(y),y)=0$) where
$$
	Y_{\varphi \pm}(x)=\frac{-b_{\varphi}(x)\pm\sqrt{d_{\varphi}(x)}}{2a_{\varphi}(x)}\qquad \text{and}\qquad X_{\varphi \pm}(y)=\frac{-\widehat{b}_{\varphi}(y)\pm\sqrt{\widehat{d}_{\varphi}(y)}}{2\widehat{a}_{\varphi}(y)}.
$$
\subsection{The kernel curve}

 Let us  consider the algebraic curve  $$
 E = \{(x,y) \in \C \times \C \ \vert \ K(x,y) = 0\}. 
 $$ 
As it is not compact, we need to embed $E$ into $\P1(\C) \times \P1(\C)$. 
 We first recall that $\P1(\C)$ denotes the complex projective line, which is the quotient of $\C \times \C \setminus \{(0,0)\}$ by the equivalence relation $\sim$ defined by 
$$
(x_{0},x_{1}) \sim (x_{0}',x_{1}') \Leftrightarrow \exists \lambda \in \C^{*},  (x_{0}',x_{1}') = \lambda (x_{0},x_{1}). 
$$
The equivalence class of $(x_{0},x_{1}) \in \C \times \C \setminus \{(0,0)\}$ is usually denoted by $[x_{0}:x_{1}] \in \P1(\C)$. The map 
$
x \mapsto  [x:1]
$ 
embeds $\C$ inside $\P1(\C)$. The latter map is not surjective: its image is $\P1(\C) \setminus \{[1:0]\}$; the missing point $[1:0]$  is usually denoted by $\infty$. 
  Now, we embed $E$  inside $\P1(\C) \times \P1(\C)$ via  ${(x,y) \mapsto ([x:1],[y:1])}$. The kernel curve $\Etproj$ is the closure of this embedding of $E$.  In other words, $\Etproj $ is the algebraic curve defined by 
$$
\Etproj = \{([x_{0}:x_{1}],[y_{0}:y_{1}]) \in \P1(\C) \times \P1(\C) \ \vert \ \overline{K}(x_0,x_1,y_0,y_1) = 0\}
$$
where $\overline{K}(x_0,x_1,y_0,y_1)$ is the following bihomogeneous polynomial
\begin{equation}\label{eq:kernelwalk}
\overline{K}(x_0,x_1,y_0,y_1)={x_1^2y_1^2K\left(\frac{x_0}{x_1},\frac{y_0}{y_1}\right)}= t \sum_{i,j=0}^2 d_{i-1,j-1} x_0^{i} x_1^{2-i}y_0^j y_1^{2-j}-x_0x_1y_0y_1. 
 \end{equation}

The following proposition has been proved in \cite[Prop.~2.1 and Cor.~2.6]{DHRS20}, when $t$ is transcendental and has been extended for a general $0<t<1$ in \cite[Prop.~1.9]{dreyfus2017differential}.
\begin{prop}\label{prop2}
The following facts are equivalent.
\begin{enumerate}
\item $\Etproj$ is an elliptic curve;
\item  The set of authorized directions $\mathcal{S}$ is not included in any half space with boundary passing through the origin.
\end{enumerate}
\end{prop}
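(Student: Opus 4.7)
The plan is to exploit the fact that $\Etproj$ is cut out in $\P1(\C)\times\P1(\C)$ by the bihomogeneous polynomial $\overline{K}$ of bidegree $(2,2)$ given in~\eqref{eq:kernelwalk}. By the adjunction formula, a smooth irreducible curve of bidegree $(2,2)$ in $\P1\times\P1$ has geometric genus $(2-1)(2-1)=1$ and is therefore an elliptic curve, while any reducible or singular curve of that bidegree fails to be elliptic. Hence the proposition reduces to showing that $\Etproj$ is smooth and irreducible if and only if $\mathcal{S}$ is not contained in any closed half-plane whose boundary passes through the origin.

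For the implication (ii)$\Rightarrow$(i), I would argue by contraposition and inspect the eight possible half-planes containing $\mathcal{S}$, splitting them into two families. The four \emph{axial} half-planes (e.g.~$\mathcal{S}\subset\{i\geq 0\}$, meaning $d_{-1,j}=0$ for all $j$) each force $\overline{K}$ to be divisible by one of the homogeneous coordinates $x_0,x_1,y_0,y_1$: in the displayed example, every $i=0$ monomial of~\eqref{eq:kernelwalk} vanishes, and the remaining term $-x_0x_1y_0y_1$ is already divisible by $x_0$, so $x_0\mid\overline{K}$ and $\Etproj$ is reducible. The four \emph{diagonal} half-planes (e.g.~$\mathcal{S}\subset\{i+j\geq 0\}$, meaning $d_{-1,-1}=d_{-1,0}=d_{0,-1}=0$) instead produce a singular point of $\Etproj$ at one of the four corners of $\P1\times\P1$. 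In the displayed example, in the affine chart $x_1=y_1=1$ one has $K(x,y)=-xy+t\sum d_{i,j}x^{i+1}y^{j+1}$, and a direct computation gives
\[
K(0,0)=td_{-1,-1},\qquad \partial_x K(0,0)=td_{0,-1},\qquad \partial_y K(0,0)=td_{-1,0},
\]
all of which vanish by hypothesis, so $(0,0)$ is a singular point of $\Etproj$. The remaining cases follow by the symmetries exchanging $0$ and $\infty$ in the coordinates.

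For the reverse implication (i)$\Rightarrow$(ii), assuming that $\mathcal{S}$ lies in none of the eight half-planes above, I would deduce irreducibility and smoothness of $\Etproj$. Irreducibility comes from the failure of the four axial conditions: $\overline{K}$ is then not divisible by any of $x_0,x_1,y_0,y_1$, which rules out the splitting off of a $(1,0)$ or $(0,1)$ component, and a direct argument excludes the remaining possibility of a factorisation into two $(1,1)$ components. Smoothness at each of the four corners of $\P1\times\P1$ is the immediate translation of the corner computations above under the failure of the diagonal conditions.

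\textbf{Main obstacle.} The delicate part is ruling out singular points of $\Etproj$ away from the four corners. A non-corner singularity would be a common zero of $\overline{K}$ and both of its partial derivatives lying in $(\C^{*})^2$, and one must show that such a coincidence forces enough of the $d_{i,j}$ to vanish to confine $\mathcal{S}$ to one of the eight forbidden half-planes. This is a non-trivial case analysis, which is carried out in~\cite[Prop.~2.1 and Cor.~2.6]{DHRS20} for $t$ transcendental and extended to arbitrary $0<t<1$ in~\cite[Prop.~1.9]{dreyfus2017differential}.
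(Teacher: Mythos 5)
The paper gives no independent proof of this proposition: it simply quotes \cite[Prop.~2.1 and Cor.~2.6]{DHRS20} (for $t$ transcendental) and \cite[Prop.~1.9]{dreyfus2017differential} (for general $0<t<1$), which is exactly where your outline also sends the only delicate steps (singular points of $\Etproj$ away from the four corners, and the full irreducibility argument), so your proposal is essentially the same approach as the paper's. Your genus count for smooth $(2,2)$ curves in $\P1(\C)\times\P1(\C)$ and the corner/divisibility computations for the degenerate half-plane cases are a correct sketch of what those references do, with only the minor caveat that non-divisibility of $\overline{K}$ by $x_0,x_1,y_0,y_1$ does not by itself exclude a bidegree $(1,0)$ component of the form $x_0-cx_1$ (a vertical line need not be a coordinate line), a point covered by the cited results.
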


From now on, we make the following additional assumption.

\begin{enumerate}[label=\textcolor{red}{\textbf{(A2)}},ref={\rm (A2)}]
     \item\label{main_hyp2} The set of authorized directions $\mathcal{S}$ is not included in any half space with boundary passing through the origin.
\end{enumerate}

Let 
$$ \overline{K}_{\varphi}(x_0,x_1,y_0,y_1)={x_1^{2} y_1^2 K_{\varphi}\left(\frac{x_0}{x_1},\frac{y_0}{y_1}\right)}= t \sum_{i,j=0}^2 d^{\varphi}_{i-1,j-1} x_0^{i} x_1^{2-i}y_0^j y_1^{2-j}-x_0x_1y_0y_1 , $$
and define $$
\Etproj_{\varphi} = \{([x_{0}:x_{1}],[y_{0}:y_{1}]) \in \P1(\C) \times \P1(\C) \ \vert \ \overline{K}_{\varphi}(x_0,x_1,y_0,y_1) = 0\}.
$$

\begin{prop}\label{prop1}
Suppose that Assumptions \ref{main_hyp} and \ref{main_hyp2} hold.  Then, 
$\Etproj_{\varphi}$ is an elliptic curve.
\end{prop}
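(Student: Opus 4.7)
The plan is to reduce the statement to Proposition \ref{prop2} applied to the transformed walk. By Lemma \ref{lem1}, $K_\varphi(x,y)$ is itself a kernel of a weighted walk in the sense of \eqref{eq:Kernel}, with weights $(d^\varphi_{i,j})_{(i,j) \in \{0,\pm 1\}^2}$ and step set $\mathcal{S}_\varphi := \{(i,j) \in \{0,\pm 1\}^2 \mid d^\varphi_{i,j} \neq 0\}$. Proposition \ref{prop2} is a general statement about kernels of weighted walks, so it may be applied to $K_\varphi$, giving: $\Etproj_\varphi$ is an elliptic curve if and only if $\mathcal{S}_\varphi$ is not included in any half-space with boundary passing through the origin.

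Next, I would observe that $\mathcal{S}_\varphi$ is the image of $\mathcal{S}$ under the linear map $\psi \colon \R^2 \to \R^2$ defined by $\psi(i,j) = (i-j, i)$. Indeed, inspection of Lemma \ref{lem1} shows that $d^\varphi_{i',j'} = d_{j', j'-i'}$, i.e.\ $d^\varphi_{\psi(i,j)} = d_{i,j}$ for $(i,j) \in \mathcal{S}$. The only delicate point is to check that $\psi$ sends $\mathcal{S}$ into $\{0, \pm 1\}^2$: the two elements of $\{0,\pm 1\}^2$ whose images under $\psi$ land outside $\{0,\pm 1\}^2$ are exactly $(1,-1)$ and $(-1,1)$, which are ruled out from $\mathcal S$ by Assumption \ref{main_hyp}. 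Hence $\psi$ restricts to a bijection between $\mathcal{S}$ and $\mathcal{S}_\varphi$.

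Since $\psi$ has matrix $\bigl(\begin{smallmatrix} 1 & -1 \\ 1 & 0 \end{smallmatrix}\bigr)$, it is an invertible $\R$-linear endomorphism of $\R^2$. Invertible linear maps send half-spaces with boundary through the origin to half-spaces with boundary through the origin: for any nonzero linear form $\ell$ on $\R^2$, $\mathcal{S}_\varphi \subseteq \{\ell \geq 0\}$ if and only if $\mathcal{S} \subseteq \{\ell \circ \psi \geq 0\}$, and $\ell \circ \psi$ is again nonzero. Thus Assumption \ref{main_hyp2} for $\mathcal{S}$ transfers to $\mathcal{S}_\varphi$, and Proposition \ref{prop2} yields the claim.

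There is no real obstacle here; the argument is a one-shot application of Proposition \ref{prop2}. The only thing to be careful about is the bookkeeping in step two (matching $d^\varphi_{i,j}$ with $d_{j,j-i}$ from Lemma \ref{lem1} and checking that the anti-diagonal exclusion in \ref{main_hyp} is precisely what ensures $\psi(\mathcal{S}) \subseteq \{0,\pm 1\}^2$), which explains why Assumption \ref{main_hyp} is built into the setup.
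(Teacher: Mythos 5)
Your proof is correct and follows essentially the same route as the paper: apply Lemma \ref{lem1} to view $K_{\varphi}$ as the kernel of a weighted walk, observe that Assumptions \ref{main_hyp} and \ref{main_hyp2} force its step set to avoid every half-space through the origin, and conclude by Proposition \ref{prop2}. You merely make explicit the bookkeeping (the invertible map $(i,j)\mapsto(i-j,i)$ and the transfer of the half-space condition) that the paper leaves implicit.
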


\begin{proof}
 By Lemma \ref{lem1} and the fact that Assumptions \ref{main_hyp}, \ref{main_hyp2} hold, the set of authorized directions of $K_{\varphi}(x,y)$ is not included in any half space with boundary passing through the origin. This follows now from Proposition~\ref{prop2}.
\end{proof}

\subsection{The Group of the walk}

Recall that we have seen in Lemma~\ref{lem1}, that  $K_{\varphi}(x,y)$ has degree  at most two in $x$ and $y$ and nonzero coefficient of degree $0$ in $x$ and $y$. Hence, $a_{\varphi}(x),c_{\varphi}(x),\widehat{a}_{\varphi}(y),\widehat{c}_{\varphi}(y)$ are not identically zero. \par 
In what follows, we use the classical dashed arrow notation to denote rational maps; {\it a priori}, such functions may not be defined everywhere. Following \cite[Sec.~3]{BMM}, \cite[Sec.~3]{KauersYatchak},  or \cite{FIM}, we consider the involutive rational functions
$$ 
i_{1}, i_{2} : \P1(\C) \times \P1(\C) \dashrightarrow \P1(\C) \times \P1(\C)
$$ 
 given by 
$$
i_1([x_0:x_1]),[y_0:y_1])) =\left(\frac{x_{0}}{x_{1}}, \frac{c_{\varphi}(\frac{x_{0}}{x_{1}}) }{a_{\varphi}(\frac{x_{0}}{x_{1}})\frac{y_{0}}{y_{1}}}\right) \quad \text{and} \quad   i_2([x_0:x_1]),[y_0:y_1]))=\left(\frac{\widehat{c}_{\varphi}(\frac{y_{0}}{y_{1}})}{\widehat{a}_{\varphi}(\frac{y_{0}}{y_{1}})\frac{x_{0}}{x_{1}}},\frac{y_{0}}{y_{1}}\right).
$$ 
Note that we have $i_1([x_0/x_1:1],[y_0/y_1:1])=i_1([x_0:x_1],[y_0:y_1])$ and the same holds for $i_2$.
Note that $i_{1}, i_{2}$  are {\it a priori} not defined when the denominators vanish but we will see in the sequel that we may overcome this problem when we will restrict to $\Etproj_{\varphi}$. \par 
For a fixed value of $x$, there are at most  two possible values of $y$ such that $(x,y)\in\Etproj_{\varphi}$. The involution $i_{1}$ corresponds to interchanging these values. A similar interpretation can be given for $i_2$. Therefore  the kernel curve $\Etproj_{\varphi}$ is left invariant by the natural action of $i_{1}, i_{2}$. \par 
 We denote by $\iota_{1}, \iota_{2}$ the restriction of $i_{1},i_{2}$ to $\Etproj_{\varphi}$, see Figure \ref{figiota}. Again, these functions are {\it a priori} not defined where the denominators vanish. However, by \cite[Prop.~3.1]{DHRS20}, this is only an ``apparent problem''. To be precise, they proved this for $t$ transcendental but the proofs stay valid for every $0<t<1$. We then obtain that
 $\iota_{1}$ and $\iota_{2}$ can be extended to morphisms of $\Etproj_{\varphi}$. We recall that a rational map $f :\Etproj_{\varphi} \dashrightarrow \Etproj_{\varphi}$ is a morphism if it is regular at any $P \in \Etproj_{\varphi}$, {\it i.e.}, if $f$ can be represented in suitable affine charts containing $P$ and  $f(P)$ by a rational function with nonvanishing denominator at $P$. \par

    



\begin{figure}
\begin{center}
\includegraphics{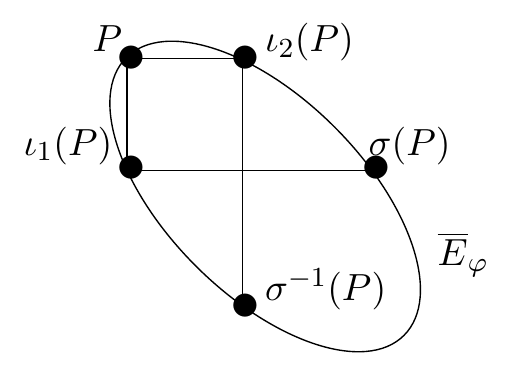}
\end{center}
\caption{The maps $i_{1}$ and $i_{2}$ restricted to the kernel curve $\Etproj_{\varphi}$ are denoted by $\iota_1$ and $\iota_2$, respectively.}\label{figiota}
\end{figure}

Let us finally define
\begin{equation*}
     \sigma=\iota_2 \circ \iota_1. 
\end{equation*}
Note that such a map is called a QRT-map and has been widely studied, see \cite{DuistQRT}. In the quarter plane case,  the algebraic nature of the generating  series depends in part on whether $\sigma$ has finite or infinite order. More precisely, in the unweighted quarter plane case, the group associated to the curve $\Etproj$ is finite if and only if the generating series is D-finite (see the introduction). In the unweighted three quarter plane, when $\Etproj$ is an elliptic curve, an infinite group associated to the curve $\Etproj$ implies that the generating series is not D-finite, see \cite[Thm.~1.3]{mustapha2019non}. Note that when the $d_{i,j}$ are fixed, the cardinality of the group depends on $t$, see \cite{fayolleRaschel} for concrete examples.\par 

Recall that the rational maps $f(x)$, $g(x,y)$ were defined in \eqref{eq:functional_equation_octant} and we may extend their definition to the elliptic curve $\Etproj_{\varphi}$, in the same way as the rational maps $\iota_1$ and $\iota_2$.
If no confusion is likely to  arise we are going to write $(x,y)$ instead of $([x_0:x_1],[y_0:y_1])$ for an element of $\P1(\C) \times \P1(\C)$. 
 We may now prove that on  the curve $\Etproj_{\varphi}$, $g(x,y)$ is almost independent of $x$. More precisely, the following lemma holds. 

\begin{lem}\label{lem3}
There exists $\varepsilon \in \{-1,1\}$, such that 
if we evaluate $g(x,y)$ on $\Etproj_{\varphi}$, then $g(x,y)= \frac{\varepsilon\sqrt{	\widehat{d}_{\varphi}(y)}}{2}$. 
Furthermore, for every $(x,y)\in\Etproj_{\varphi}$, $$g(\iota_{2}(x,y))=-g(x,y).$$
\end{lem}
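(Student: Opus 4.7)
The plan is to observe that $g(x,y)$, when compared with the kernel coefficients (as a polynomial in $x$), has a very simple form, and then use the quadratic formula on the kernel.

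\textbf{Step 1: Expressing $\widehat{a}_{\varphi}(y)$ and $\widehat{b}_{\varphi}(y)$.} From the definition $K_{\varphi}(x,y)=xy\bigl(t\sum d^{\varphi}_{i,j}x^{i}y^{j}-1\bigr)$ and the table in Lemma \ref{lem1}, one reads off
\[
\widehat{a}_{\varphi}(y)=ty\bigl(d_{0,-1}+d_{1,0}y\bigr),\qquad \widehat{b}_{\varphi}(y)=td_{1,1}y^{2}+(td_{0,0}-1)y+td_{-1,-1}.
\]

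\textbf{Step 2: Factorization of $g$.} Expanding the definition of $g(x,y)$ from \eqref{eq:functional_equation_octant}, I would simply rearrange
\[
g(x,y)=\tfrac{1}{2}\bigl(y-td_{1,1}y^{2}-td_{0,0}y-td_{-1,-1}\bigr)-t\,x\,y\,(d_{0,-1}+d_{1,0}y).
\]
Comparing with Step 1 gives the key identity
\[
g(x,y)=-\tfrac{1}{2}\widehat{b}_{\varphi}(y)-x\,\widehat{a}_{\varphi}(y),
\]
valid as a rational function on $\P^{1}(\C)\times\P^{1}(\C)$.

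\textbf{Step 3: Restriction to $\Etproj_{\varphi}$.} On the kernel curve, $x$ satisfies $\widehat{a}_{\varphi}(y)x^{2}+\widehat{b}_{\varphi}(y)x+\widehat{c}_{\varphi}(y)=0$, so by the quadratic formula $2x\,\widehat{a}_{\varphi}(y)=-\widehat{b}_{\varphi}(y)\pm\sqrt{\widehat{d}_{\varphi}(y)}$. Plugging this into the expression for $g$ from Step~2 gives $g(x,y)=\mp\tfrac{1}{2}\sqrt{\widehat{d}_{\varphi}(y)}$. Squaring yields the equation
\[
4\,g(x,y)^{2}=\widehat{d}_{\varphi}(y)
\]
of rational functions on $\Etproj_{\varphi}$. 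Since $\Etproj_{\varphi}$ is an elliptic curve (Proposition~\ref{prop1}), hence an irreducible algebraic curve, the locally-defined function $2g(x,y)/\sqrt{\widehat{d}_{\varphi}(y)}$ is a square root of $1$ on a connected set, so it equals a constant $\varepsilon\in\{-1,1\}$, proving the first assertion.

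\textbf{Step 4: The involution $\iota_{2}$.} By definition, $\iota_{2}$ fixes $y$ and sends $x$ to the other root $\widehat{c}_{\varphi}(y)/(\widehat{a}_{\varphi}(y)x)$ of $K_{\varphi}(\cdot,y)=0$. Using again Step~2,
\[
g(x,y)+g(\iota_{2}(x,y))=-\widehat{b}_{\varphi}(y)-\widehat{a}_{\varphi}(y)\bigl(x+\iota_{2}(x,y)_{x}\bigr)=-\widehat{b}_{\varphi}(y)-\widehat{a}_{\varphi}(y)\cdot\Bigl(-\frac{\widehat{b}_{\varphi}(y)}{\widehat{a}_{\varphi}(y)}\Bigr)=0,
\]
by Vieta's formulas. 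This yields $g(\iota_{2}(x,y))=-g(x,y)$.

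The only subtle point is the sign-consistency argument in Step 3: one must be sure that the $\pm$ from the quadratic formula is globally consistent on $\Etproj_{\varphi}$. This follows from the irreducibility of the elliptic curve (so that a rational function squaring to $1$ is identically $\pm 1$), but writing the squaring identity $4g^{2}=\widehat{d}_{\varphi}$ first makes this rigorous without having to track branches of the square root explicitly. Everything else is a direct computation.
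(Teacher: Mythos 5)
Your Steps 1, 2 and 4 are essentially the paper's own computation: the key identity $g(x,y)=-x\,\widehat{a}_{\varphi}(y)-\tfrac{1}{2}\widehat{b}_{\varphi}(y)$, the quadratic formula for $x$ on the kernel curve, and the fact that $\iota_{2}$ fixes $y$ and exchanges the two roots in $x$. The genuine problem is the sign-consistency claim in Step 3, which is not only unjustified but false. The paper fixes once and for all a determination of the square root, so $\sqrt{\widehat{d}_{\varphi}(y)}$ is a quantity depending on the coordinate $y$ alone; since $\iota_{2}$ preserves $y$, it is $\iota_{2}$-invariant, while your own Step 4 shows $g\circ\iota_{2}=-g$. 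Hence at the two points of $\Etproj_{\varphi}$ lying over a generic $y$ (where $\widehat{d}_{\varphi}(y)\neq 0$) the ratio $2g(x,y)/\sqrt{\widehat{d}_{\varphi}(y)}$ equals $+1$ at one point and $-1$ at the other: it is not constant, and no single $\varepsilon$ works on all of $\Etproj_{\varphi}$. The irreducibility/connectedness argument does not apply, because this ratio is neither a rational function on $\Etproj_{\varphi}$ nor continuous: with the fixed branch, $\sqrt{\widehat{d}_{\varphi}(y)}$ has jump discontinuities along the preimage of the branch cut. (The rational function on the curve whose square is $\widehat{d}_{\varphi}(y)$ is $\pm\bigl(2\widehat{a}_{\varphi}(y)x+\widehat{b}_{\varphi}(y)\bigr)=\mp 2g$ itself, so running your argument with that choice only yields a tautology and says nothing about the fixed determination.) In short, Step 3's conclusion contradicts Step 4.

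The lemma must therefore be read pointwise, which is exactly how the paper proves and later uses it: for each $(x,y)\in\Etproj_{\varphi}$ there is $\varepsilon\in\{-1,1\}$, a priori depending on the point, with $g(x,y)=\varepsilon\sqrt{\widehat{d}_{\varphi}(y)}/2$, and the sign is reversed by $\iota_{2}$; only the relation $g\circ\iota_{2}=-g$ enters the proof of Theorem~\ref{theo2}. If you delete the global-constancy paragraph (and the squaring detour it was meant to support), your Steps 1--2, the quadratic-formula computation at the start of Step 3, and Step 4 establish precisely this statement, by the same route as the paper.
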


\begin{proof}
Let $(x,y)\in \Etproj_{\varphi}$. We find $x=\frac{-\widehat{b}_{\varphi}(y)-\varepsilon\sqrt{	\widehat{d}_{\varphi}(y)}}{2\widehat{a}_{\varphi}(y)}$ for some $\varepsilon \in \{-1,1\}$.
Since $g(x,y)=-x\widehat{a}_{\varphi}(y)-\frac{\widehat{b}_{\varphi}(y)}{2}$,  it holds that
$$g(x,y)=-\frac{-\widehat{b}_{\varphi}(y)
-\varepsilon\sqrt{	\widehat{d}_{\varphi}(y)}}{2\widehat{a}_{\varphi}(y)}\times\widehat{a}_{\varphi}(y)-\frac{\widehat{b}_{\varphi}(y)}{2}=\frac{\varepsilon\sqrt{	\widehat{d}_{\varphi}(y)}}{2}.$$
Furthermore, the first coordinate of $\iota_{2}(x,y)$, which is, $\frac{-\widehat{b}_{\varphi}(y)+\varepsilon\sqrt{	\widehat{d}_{\varphi}(y)}}{2\widehat{a}_{\varphi}(y)}$, is the other root, and the same computation shows 
$$g(\iota_{2}(x,y))=-\frac{\varepsilon\sqrt{\widehat{d}_{\varphi}(y)}}{2}=-g(x,y).$$
\end{proof}
\section{Analytic continuation}
\label{sec:AnalyticContinuation}

\subsection{Uniformization of the kernel curve}\label{sec21}

Since by Proposition \ref{prop1}, $\Etproj_{\varphi}$ is an elliptic curve, we may identify $\Etproj_{\varphi}$ with $\C/(\Z\omega_1 + \Z\omega_2)$, where $(\omega_1,\omega_2)\in \C^{2}$ is the basis of a lattice, via the $(\Z\omega_1 + \Z\omega_2)$-periodic map  
$$
\Lambda : \left\{
\begin{array}{lll}
\C& \rightarrow &\Etproj_{\varphi} \\
\omega &\mapsto& (\mathfrak{q}_1(\omega), \mathfrak{q}_2(\omega)),
\end{array}
\right.
$$
where $\mathfrak{q}_1, \mathfrak{q}_2$ are rational functions of $\wp$ and its derivative $d\wp/d\omega$, where $\wp$ is the Weierstrass function associated with the lattice $\Z\omega_1 + \Z\omega_2$:

$$
     \wp(\omega )=\wp(\omega;\omega_1,\omega_2):=\frac{1}{\omega^{2}}+ \sum_{(\ell_{1},\ell_{2}) \in \Z^{2}\setminus \{(0,0)\}} \left(\frac{1}{(\omega +\ell_{1}\omega_{1}+\ell_{2}\omega_{2})^{2}} -\frac{1}{(\ell_{1}\omega_{1}+\ell_{2}\omega_{2})^{2}}\right).
$$

Then, the field of meromorphic functions on the $\omega$-plane $\Etproj_{\varphi}$ may be identified with the field of meromorphic functions on $\C/(\Z\omega_1 + \Z\omega_2)$, i.e., the field of meromorphic functions on $\C$ that are $(\omega_{1},\omega_{2})$-periodic (or elliptic). Classically, this latter field is equal to $\C(\wp, \wp')$, see \cite{WW}. The map $\Lambda$ induces a bijection from $\C/(\omega_{1}\Z+\omega_{2}\Z)$  to $\Etproj_{\varphi}$ which we will still denote by $\Lambda$. \par

The maps $\iota_{1}$, $\iota_{2}$, and $\sigma$ may be lifted to the $\omega$-plane. We will call them $\iup_{1}$, $\iup_{2}$, and $\widetilde{\sigma}$, respectively. So we have the commutative diagrams 
\begin{equation*}
\xymatrix{
    \Etproj_{\varphi}  \ar@{->}[r]^{\iota_k} & \Etproj_{\varphi} \\
    \C \ar@{->}[u]^\Lambda \ar@{->}[r]_{\iup_k} & \C \ar@{->}[u]_\Lambda 
  }
  \qquad\qquad\qquad
  \xymatrix{
    \Etproj_{\varphi}  \ar@{->}[r]^{\sigma} & \Etproj_{\varphi}  \\
\C \ar@{->}[u]^\Lambda \ar@{->}[r]_{\widetilde{\sigma}} & \C \ar@{->}[u]_\Lambda 
  } 
\end{equation*}
More precisely, following \cite{DuistQRT} (see in particular page 35, Prop~2.5.2,  and Rem.~2.3.8) and \cite[Sec.~2.1]{dreyfus2017differential}, there exists $\omega_{3}\in \C$ such that 
\begin{equation}
\label{eq:expression_group_universal_cover}
     \iup_{1}(\omega)=-\omega, \quad\iup_{2}(\omega)=-\omega+\omega_{3},\quad \text{and} \quad\widetilde{\sigma}(\omega)=\omega+\omega_{3}. 
\end{equation}

We have $\Lambda (\omega)=\Lambda(\omega+\omega_{1})=\Lambda(\omega+\omega_{2})$ for all $\omega\in \C$. This implies that for all $\ell_{1},\ell_{2}\in \Z$, $\iup_{2}$ may be replaced by $\omega\mapsto -\omega+\omega_{3}+\ell_{1}\omega_{1}+\ell_{2}\omega_{2}$. This shows that $\omega_{3}$ is not uniquely defined: it is only defined modulo the lattice $\omega_{1}\Z+\omega_{2}\Z$. \par 

An explicit expression of $\omega_{1},\omega_{2},\omega_{3}$, and $\Lambda$ may be found for instance in \cite[Sec.~2]{dreyfus2017differential}. It is proved that without loss of generality, we may assume that $\omega_{1}$ is purely imaginary, and $\omega_{2},\omega_3$ are positive real numbers, such that $0<\omega_3 <\omega_2$. Note also that the order of $\sigma$ may depends on $t$. It will be finite if and only if $\omega_2/\omega_3 \in \Q$.

\subsection{Analytic continuation}\label{sec22}

Define the domains 
\begin{equation}
\label{eq:three_domains}
     \mathcal{D}_{x} := \Etproj_{\varphi}\cap \{\vert x\vert < 1 \},\quad \mathcal{D}_{y} := \Etproj_{\varphi}\cap \{\vert y\vert < 1 \},\quad \text{and}\quad \mathcal{D}_{x,y}:=\mathcal{D}_{x}\cap \mathcal{D}_{y}. 
\end{equation}   

The coefficients of the series $C_{0-}$ are positive real numbers bounded by $1$. Since $0<t<1$, we find that  $C_{0-}$ converges in $\mathcal{D}_{x}$. Therefore $xf(x)C_{0-}(x)$ converges in $\mathcal{D}_{x}$. Similarly, $D_{\varphi}(y)$ converges in $\mathcal{D}_{y}$ and   $L_{\varphi}(x,y)$ converges in $\mathcal{D}_{x,y}$.  We now follow the strategy of  \cite[Sec.~2]{dreyfus2017differential}. Since the first steps are almost the same, we sketch the method. By \cite[Lem.~2.7]{dreyfus2017differential}, $\mathcal{D}_{x,y}$ is not empty.
 We evaluate \eqref{eq:functional_equation_octant} on 
$\mathcal{D}_{x,y}$  to find, 
$$0=xf(x)C_{0-}(x)+xg(x,y)D_{\varphi}(y)+\frac{t}{2}d_{-1,-1}xC_{0,0}-\frac{xy}{2}.$$

Let us prove that there are at most two points of  $\Etproj_{\varphi}$ with $x$-coordinate equal to $[0:1]$. Indeed, $([0:1],[y_{0}:y_{1}])\in \Etproj_{\varphi}$ implies that $[y_{0}:y_{1}]$ satisfies
$$0=t\sum_{j=0}^2 d^{\varphi}_{-1,j-1} y_0^j y_1^{2-j}=t(d^{\varphi}_{-1,0} y_0 y_1+d^{\varphi}_{-1,-1}  y_1^{2}).$$
Therefore, the only two candidates in $\Etproj_{\varphi}$ are 
$$([0:1],[1:0]) \quad \text{and} \quad ([0:1],[-d^{\varphi}_{-1,-1}:d^{\varphi}_{-1,0}]).  $$

 Let $\mathcal{Z}:= \Etproj_{\varphi}\cap (\{0\}\times \P1(\C))$ denote the finite set of points on $\Etproj_{\varphi}$ with $x$-coordinate that is equal to $[0:1]$.
On $\mathcal{D}_{x,y}\setminus \mathcal{Z}$, we may simplify by $x$:
\begin{equation}\label{eq:funcequaonthecurve2}
0=f(x)C_{0-}(x)+g(x,y)D_{\varphi}(y)+\frac{t}{2}d_{-1,-1}C_{0,0}-\frac{y}{2}.
\end{equation}
Since the right hand side is analytic on $\mathcal{D}_{x,y}$, it is continuous, and \eqref{eq:funcequaonthecurve2} holds on $\mathcal{D}_{x,y}$. 
We continue 
 $g(x,y)D_{\varphi}(y)$ on $\mathcal{D}_{x}$ with the formula  
 $$g(x,y)D_{\varphi}(y)
	=-f(x)C_{0-}(x)-\frac{t}{2}d_{-1,-1}C_{0,0}+\frac{y}{2}. $$ 
	Similarly, we extend $f(x)C_{0-}(x)$ on $\mathcal{D}_{y}$, so both functions have been extended to $\mathcal{D}_{x}\cup \mathcal{D}_{y}$. Let us now see their analytic continuation in the $\omega$-plane. By \cite[Sec.~2]{dreyfus2017differential}, there exists a connected set $\mathcal{O}\subset \C$ such that 
\begin{itemize}
\item $\Lambda (\mathcal{O})=\mathcal{D}_{x}\cup \mathcal{D}_{y}$;
\item $\widetilde{\sigma}^{-1}(\mathcal{O})\cap \mathcal{O}\neq \varnothing$;
\item $\displaystyle \bigcup_{\ell\in \Z}\widetilde{\sigma}^{\ell}(\mathcal{O})=\C$.
\end{itemize}

The monomials $x$ and $y$ are meromorphic on $\Etproj_{\varphi}$. We will denote them by $x(\omega)$ and $y(\omega)$, respectively. They are respectively the functions $\mathfrak{q}_1, \mathfrak{q}_2$ defined in Section \ref{sec21}.  Note that we have an explicit expression for the latter, see \cite[Prop.~2.1]{dreyfus2017differential}.  Simplifying notations, let us write $g_{x,y}(\omega)= g(x(\omega),y(\omega))$.
\par 

The main differences with \cite[Sec.~2]{dreyfus2017differential} begin here, since the factor of $D_{\varphi}(y)$, the polynomial $g(x,y)$, involves $x$ and $y$. The use of  Lemma~\ref{lem3} about the almost independence of $x$ will be a crucial point in the proof of the following theorem. \par 
Recall that the field of meromorphic functions on $\Etproj_{\varphi}$ may be identified with $\C/(\Z\omega_1 + \Z\omega_2)$. The universal cover of $\Etproj_{\varphi}$ may be identified with  $\C$. Meromorphic functions on this universal cover are multivalued functions on $\Etproj_{\varphi}$.
Similarly to the quarter plane case, we have the following functional equations.

\begin{thm}\label{theo2}
The functions $f(x)C_{0-}(x)$ and  $g(x,y)D_{\varphi}(y)$ may be lifted to the universal cover of $\Etproj_{\varphi}$. We will call the continuations $\rx$ and $\ry$ respectively. Seen as functions of $\omega$,
they are meromorphic on $\C$ and satisfy 
\begin{align}
\label{eq:omega_3_per_rx}     \rx(\omega+2\omega_{3}) & =\rx(\omega)+y(\omega+2\omega_{3})-y(\omega+\omega_{3}),\\
     \rx(\omega+\omega_{1})&=  \rx(\omega),\label{eq:omega_1_per_rx}\\
       \ry(\omega+2\omega_{3})   &=\ry(\omega)+\frac{-y(\omega)+2y(\omega+\omega_{3})-y(\omega+2\omega_{3})}{2}, \label{eq:omega_3_per_ry} \\ 
 \ry(\omega+\omega_{1})&=  \ry(\omega).\label{eq:omega_1_per_ry}
\end{align}
\end{thm}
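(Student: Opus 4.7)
} The strategy closely follows \cite[Sec.~2]{dreyfus2017differential}, but the presence of $x$ inside $g(x,y)$ forces us to use Lemma~\ref{lem3} at a crucial step. First, I would lift the functions $f(x)C_{0-}(x)$ and $g(x,y)D_{\varphi}(y)$ to the open set $\mathcal{O}\subset\C$ (with $\Lambda(\mathcal{O})=\mathcal{D}_{x}\cup\mathcal{D}_{y}$) via the uniformization $\Lambda$, getting meromorphic functions $\rx$ and $\ry$ on $\mathcal{O}$. On $\mathcal{O}$ the relation \eqref{eq:funcequaonthecurve2} becomes
\begin{equation*}
\rx(\omega)+\ry(\omega)+C=\frac{y(\omega)}{2},\qquad C:=\tfrac{t}{2}d_{-1,-1}C_{0,0}.
\end{equation*}
Since $\rx$ depends only on $x$, and $x$ is invariant under $\iota_1$, we get $\rx\circ\iup_{1}=\rx$, i.e.~$\rx(-\omega)=\rx(\omega)$. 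Since $D_{\varphi}(y)$ depends only on $y$ (invariant under $\iota_2$) whereas Lemma~\ref{lem3} gives $g\circ\iota_{2}=-g$, we obtain $\ry\circ\iup_{2}=-\ry$, i.e.~$\ry(-\omega+\omega_{3})=-\ry(\omega)$.

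Next, evaluating the functional equation at $\omega$ and at $\iup_{2}(\omega)=-\omega+\omega_{3}$ and using $y\circ\iup_{2}=y$ together with the sign change of $\ry$, addition yields
\begin{equation*}
\rx(-\omega+\omega_{3})+\rx(\omega)+2C=y(\omega).
\end{equation*}
Substituting $\omega\mapsto -\omega$ and applying $\rx(-\omega)=\rx(\omega)$ together with the key identity $y(-\omega)=y(\omega+\omega_{3})$ (which follows from $\widetilde{\sigma}=\iup_{2}\circ\iup_{1}$ acting as translation by $\omega_{3}$ and from $y\circ\iup_{2}=y$), I obtain
\begin{equation*}
\rx(\omega+\omega_{3})=-\rx(\omega)+y(\omega+\omega_{3})-2C.
\end{equation*}
Iterating this relation twice produces the telescoping identity \eqref{eq:omega_3_per_rx}. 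For $\ry$, subtracting the functional equation at $\omega$ and $\iup_{1}(\omega)=-\omega$ gives $\ry(-\omega)-\ry(\omega)=\tfrac{1}{2}(y(-\omega)-y(\omega))$; combining this with $\ry(-\omega+\omega_{3})=-\ry(\omega)$ and again exploiting $y(-\omega)=y(\omega+\omega_{3})$ yields $\ry(\omega+\omega_{3})=-\ry(\omega)-\tfrac{1}{2}(y(\omega+\omega_{3})-y(\omega))$, whose second iterate is \eqref{eq:omega_3_per_ry}.

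It remains to extend $\rx$ and $\ry$ meromorphically to all of $\C$ and to check $\omega_{1}$-periodicity. Since $\bigcup_{\ell\in\Z}\widetilde{\sigma}^{\ell}(\mathcal{O})=\C$ and the right-hand sides of the $\omega_{3}$-shift formulas involve only $y$ (which is meromorphic on $\C$, being an elliptic function) and the values of $\rx,\ry$ at the shifted argument, one can continue $\rx$ and $\ry$ step by step to meromorphic functions on $\C$; consistency on overlaps follows from uniqueness of analytic continuation and from the fact that the derived relations already hold on $\mathcal{O}$. For the periodicity, on $\mathcal{O}$ both $\rx$ and $\ry$ descend from well-defined functions on $\Etproj_{\varphi}$, hence are $\omega_{1}$-periodic by construction of $\Lambda$; the extension formulas commute with translation by $\omega_{1}$ (the translations by $\omega_{3}$ and $\omega_{1}$ commute, and $y$ is itself $\omega_{1}$-periodic), so \eqref{eq:omega_1_per_rx} and \eqref{eq:omega_1_per_ry} propagate through all iterates.

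The main technical subtlety is the identity $y(-\omega)=y(\omega+\omega_{3})$ and the correct sign tracking for $\ry$ under $\iup_{2}$ coming from Lemma~\ref{lem3}; once these are in hand, the $\omega_{3}$-shift formulas follow by direct computation and the rest is bookkeeping. The possible difficulty in the meromorphic continuation — controlling poles at boundary points of $\widetilde{\sigma}^{\ell}(\mathcal{O})$ — is handled as in \cite[Sec.~2]{dreyfus2017differential}, since the $\omega_{3}$-shift formulas show that any singularity of $\rx$ or $\ry$ at a new point must come from a singularity of $y$, itself meromorphic.
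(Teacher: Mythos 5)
Your proposal is correct and follows essentially the same route as the paper: lift the two functions to $\mathcal{O}$ via $\Lambda$, use the invariances $\rx\circ\iup_1=\rx$, $\widetilde{r}_y\circ\iup_2=\widetilde{r}_y$ together with the sign change of $g$ under $\iota_2$ from Lemma~\ref{lem3} to derive the one-step $\omega_3$-shift relations (your $\rx(\omega+\omega_3)=-\rx(\omega)+y(\omega+\omega_3)-2C$ and $\ry(\omega+\omega_3)=-\ry(\omega)-\tfrac12(y(\omega+\omega_3)-y(\omega))$ are exactly the paper's intermediate identities), iterate them, and then continue meromorphically along $\bigcup_{\ell}\widetilde{\sigma}^{\ell}(\mathcal{O})=\C$ while propagating the $\omega_1$-periodicity already present on $\mathcal{O}$. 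The only cosmetic difference is that you evaluate the functional equation at $\omega$, $-\omega$, and $-\omega+\omega_3$ and exploit the (anti)symmetries of $\rx$ and $\ry$, whereas the paper adds and subtracts the three substituted equations directly; the content is identical.
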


\begin{proof}
  Let $\widetilde{r}_{y}$ be the meromorphic  continuation of $D_{\varphi}(y)$. Consider $x(\omega)$ and $y(\omega)$ as meromorphic functions on $\Etproj_{\varphi}$. 
From \eqref{eq:funcequaonthecurve2}, we deduce that for all $\omega\in \mathcal{O}$,
\begin{equation}\label{eq2}
0= \rx(\omega)+ g_{x,y}(\omega)\widetilde{r}_{y}(\omega)+\frac{td_{-1,-1}C_{0,0}}{2} -\frac{1}{2}y(\omega).
\end{equation}
Recall that $\iup_{1}(\omega)=-\omega$ and $\iota_{1}$ leaves  the first coordinate invariant, so $x(\omega)=x(\iup_{1}(\omega))=x(-\omega)$ and since $r_x$ represent a function of $x$, by the same logic $\rx(\omega)=\rx(-\omega)$. Similarly, $\iup_{2}(\omega)=-\omega+\omega_{3}$,  $y(-\omega)=y(\omega+\omega_{3})$, and $\widetilde{r}_{y}(-\omega)=\widetilde{r}_{y}(\omega+\omega_{3})$. 
\par
By Lemma \ref{lem3}, $g_{x,y}(\widetilde{\sigma}(\omega))=-g_{x,y}(\iup_{1}(\omega))$. We now replace $\omega$ by $\iup_{1}(\omega)$ in both sides. We deduce that for all $\omega\in \widetilde{\sigma}^{-1}(\mathcal{O})\cap \mathcal{O} $,
\begin{equation}\label{eq3}
0= \rx(\omega)-g_{x,y}(\omega+\omega_{3}) \widetilde{r}_{y}(\omega+\omega_{3})+\frac{td_{-1,-1}C_{0,0}}{2} -\frac{y(\omega+\omega_{3})}{2}.
\end{equation}

We now replace $\omega$ by $\iup_{2}(\omega)$  in both sides of \eqref{eq3}, (or equivalently replace $\omega$ by $\widetilde{\sigma}(\omega)$ in both sides of \eqref{eq2}) to obtain for all $\omega\in \widetilde{\sigma}^{-1}(\mathcal{O})\cap \mathcal{O}$,
\begin{equation}\label{eq4}
0= \rx(\omega+\omega_{3})+ g_{x,y}(\omega+\omega_{3})\widetilde{r}_{y}(\omega+\omega_{3})+\frac{td_{-1,-1}C_{0,0}}{2} -\frac{y(\omega+\omega_{3})}{2}.
\end{equation}
By definition, $\ry=g_{x,y}\widetilde{r}_{y}$.  Subtracting  \eqref{eq3} from \eqref{eq2}, we find
\begin{equation}\label{eq5}
r_{y} (\omega+\omega_{3})=  - \ry (\omega) +\frac{y(\omega)-y(\omega+\omega_{3})}{2} . 
\end{equation}

Adding  \eqref{eq3} to \eqref{eq4} we find

\begin{equation}\label{eq6}
\rx (\omega+\omega_{3})=-  \rx (\omega)-td_{-1,-1}C_{0,0}+y(\omega+\omega_{3}) .  
\end{equation}
Recall that $\widetilde{\sigma}^{-1}(\mathcal{O})\cap \mathcal{O}\neq \varnothing$, so that the intersection is an open set with an accumulation point. By the analytic continuation principle we may continue  $\rx$ and $\ry$ on $\widetilde{\sigma}(\mathcal{O})$ with \eqref{eq5} and \eqref{eq6}.  Iterating this strategy, we continue $\rx$ and $\ry$ on $\displaystyle \bigcup_{\ell\in \Z}\widetilde{\sigma}^{\ell}(\mathcal{O})=\C$ and the functions satisfy \eqref{eq5} and \eqref{eq6}. 
Now, we iterate these relations to get \eqref{eq:omega_3_per_ry}
\begin{align*}
\ry (\omega+2\omega_{3})&=- \ry (\omega+\omega_{3}) +\frac{y(\omega+\omega_{3})-y(\omega+2\omega_{3})}{2} \\
&=\ry (\omega)+\frac{-y(\omega)+2y(\omega+\omega_{3})-y(\omega+2\omega_{3})}{2}.
\end{align*}
Similarly, we find  \eqref{eq:omega_3_per_rx}.\par 
The proof of \eqref{eq:omega_1_per_rx} and  \eqref{eq:omega_1_per_ry} is similar to \cite{dreyfus2017differential}: it is based on the fact that $\rx$ and $\ry$ defined on $\mathcal{O}$ are already $\omega_{1}$-periodic. 
\end{proof}

\section{Differential transcendence}
\label{sec:DiffTranscendance}

\subsection{Definition and elementary properties}
The study of algebraic structures of differential fields is the main object of the so-called 
differential algebra.  We refer the interested reader to the founding book of Kolchin, see 
\cite{kolchin1973differential}, for further details.
In what follows, every ring contains $\Q$. In particular every field is of characteristic zero. 
A differential ring $(L,\delta)$ is a ring  $L$ equipped with a derivation $\delta$, that is an additive morphism satisfying the Leibniz rules $\delta(ab)=\delta(a)b+a\delta(b)$ for every $a,b\in L$. If $L$ is additionally a field, we say that it is a differential field. Let $K$ be a differential field and let $L$ be a $K$-algebra. Assume that the derivative of $L$ extends the derivative of $K$. We say that $y\in L$ is {\it $\delta$-algebraic} over $K$ if there exists $n\in \N$ such that $y,\dots, \delta^{n}(y)$ are algebraically dependent over $K$. We say that $y$ is {\it $\delta$-transcendental} otherwise. When no confusions arise, we may also say that $y$ is differentially algebraic or differentially transcendental.\\ \par

We now want to embed the series $C(x,y)$ into a differential field. Since we consider walks with small steps, we have $c_{i,j}(n)=0$ when $|i|,|j|>n$. Therefore, $C(x,y)\in \mathcal{L}:= \Q(x,y)((t))$. Note that $\mathcal{L}$ is a field.  It may be equipped as a differential field with the derivatives $\partial_{x}$, $\partial_{y}$, and $\partial_{t}$. We also have $\Q\subset \Q(x)\subset \mathcal{L}$. Note that since $x$ is $\partial_{x}$-algebraic over $\Q$, we find that $C(x,y)$ is $\partial_{x}$-algebraic over $\Q(x)$ if and only if it is $\partial_{x}$-algebraic over $\Q$. A similar statement holds for $\partial_{y}$. 

\begin{rem}\label{rem1}
Let $f\in \mathcal{L}$. By \cite[Prop.~8, P.~101]{kolchin1973differential}, $f$ is $\partial_{x}$-algebraic over $\C$ if and only if it is $\partial_{x}$-algebraic over $\Q$. A similar statement holds for $\partial_{y}$.
\end{rem}

On the other hand we will have to consider the differential transcendence of $\rx\in \mathcal{M}(\C)$, where $\mathcal{M}(\C)$ denotes the field of meromorphic functions on $\C$.  Note that both fields $\C(\wp,\wp')\subset \mathcal{M}(\C)$ are $\partial_{\omega}$-fields.
As before, $\rx$ is $\partial_{\omega}$-algebraic over $\C(\wp,\wp')$ if and only if it is $\partial_{\omega}$-algebraic over $\C$.
The following lemma shows that to determine the nature of $C(x,y)$, it then suffices to determine the nature of $\rx$. 
\begin{lem}\label{lem2}
The following statements are equivalent.
\begin{enumerate}
\item $C(x,y)$ is $\partial_{x}$-algebraic over $\Q$;
\item $c(x)C_{-0}(x^{-1})$ is $\partial_{x}$-algebraic over $\Q$;
\item $C(x,y)$ is $\partial_{y}$-algebraic over $\Q$;
\item $\widehat{c}(y)C_{0-}(y^{-1})$ is $\partial_{y}$-algebraic over $\Q$;
\item $\rx(\omega)$ is $\partial_{\omega}$-algebraic over $\C$.
\end{enumerate}
\end{lem}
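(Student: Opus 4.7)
The plan is to establish the chain (1)$\Leftrightarrow$(3), (2)$\Leftrightarrow$(4), (1)$\Leftrightarrow$(2), and (4)$\Leftrightarrow$(5), using the diagonal symmetry of the step set, the functional equation~\eqref{eq1}, and the elliptic uniformization of $\Etproj_{\varphi}$. Throughout I would use Remark~\ref{rem1} to identify differential algebraicity over $\Q$ with the same property over $\C$, and the standard fact that differentially algebraic elements form a differential subfield containing the $\partial$-constants. For (1)$\Leftrightarrow$(3) and (2)$\Leftrightarrow$(4), I would exploit that by Assumption~\ref{main_hyp} and the fact that $(0,0)$ lies on the diagonal, $c_{i,j}(n) = c_{j,i}(n)$ for all $i,j,n$. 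This yields $C(x,y) = C(y,x)$, and setting $\psi(z) := c(z)\sum_{i > 0,\, n\geq 0} c_{-i,0}(n)\, z^{-i} t^n$ one has $c(x)C_{-0}(x^{-1}) = \psi(x)$ and $\widehat{c}(y)C_{0-}(y^{-1}) = \psi(y)$ (using $c = \widehat{c}$, which follows from the diagonal symmetry of $K$). The equivalences (1)$\Leftrightarrow$(3) and (2)$\Leftrightarrow$(4) then follow by swapping $x$ and $y$.

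For (1)$\Leftrightarrow$(2), I would rewrite~\eqref{eq1} as
\[
K(x,y)\,C(x,y) + xy - t\,d_{-1,-1}\,C_{0,0} \;=\; \psi(x) + \psi(y).
\]
The key observation is that $\psi(y)\in \Q[y,y^{-1}]((t))$ is a $\partial_x$-constant and hence trivially $\partial_x$-algebraic. If (1) holds, the left-hand side is $\partial_x$-algebraic, so $\psi(x)$ is too, giving (2). Conversely, if (2) holds, so is the right-hand side, and since $K(x,y)$ is a unit in $\mathcal{L}$ (its $t$-constant term is $-xy\neq 0$) one can divide to recover the $\partial_x$-algebraicity of $C(x,y)$.

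The main difficulty is (4)$\Leftrightarrow$(5), which I would decompose into two reductions. First, multiplication by the polynomials $\widehat{c}(y)$ or $f(x)$ preserves differential algebraicity, and the substitution $x = y^{-1}$ (under which $\partial_y = -x^2\,\partial_x$) shows that (4) is equivalent to $f(x)C_{0-}(x)$ being $\partial_x$-algebraic over $\C$. Second, on the nonempty open set $\Lambda^{-1}(\mathcal{D}_x)$ one has $\rx(\omega) = f(x(\omega))\,C_{0-}(x(\omega))$. Since $x(\omega) \in \C(\wp,\wp')$ is non-constant and $\C(\wp,\wp')$ is a degree-$2$ algebraic extension of $\C(x(\omega))$, the derivatives $x^{(k)}(\omega)$ all lie in $\C(\wp,\wp')$ and are algebraic over $\C(x(\omega))$. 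The chain rule $\partial_\omega = x'(\omega)\,\partial_x$ then transports any algebraic differential equation in $x$ for $f(x)C_{0-}(x)$ into one in $\omega$ for $\rx$; conversely any algebraic differential relation for $\rx$ over $\C$ pulls back to a relation for $f(x)C_{0-}(x)$ with coefficients in $\C(x(\omega), x'(\omega),\ldots)$, which can then be cleared using the algebraicity of these coefficients over $\C(x(\omega))$. Analytic continuation extends the resulting identities from $\Lambda^{-1}(\mathcal{D}_x)$ to all of $\C$.

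The principal obstacle is the elimination step in the reverse direction of (4)$\Leftrightarrow$(5), which relies on the specific structure of the Weierstrass parametrization to turn a $\partial_\omega$-algebraic relation into a $\partial_x$-algebraic one; everything else reduces to routine manipulations of the functional equation and the symmetry of the step set.
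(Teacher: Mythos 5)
Your proposal is correct and follows essentially the same route as the paper: the same reorganization of the equivalences via the functional equation \eqref{eq1} and the invertibility of $K(x,y)$, the diagonal symmetry $C(x,y)=C(y,x)$ with $c=\widehat{c}$, the reduction of (iv) to the $\partial_x$-algebraicity of $f(x)C_{0-}(x)$, and the transfer to the $\omega$-plane through the uniformization $\Lambda$. The only difference is that where the paper invokes \cite[Lem.~6.3 and~6.4]{DHRS} for the equivalence with (v), you sketch that transfer directly (chain rule plus the algebraicity of $x'(\omega),x''(\omega),\dots$ over $\C(x(\omega))\subset\C(\wp,\wp')$ and analytic continuation), which is a valid inlining of the cited argument rather than a genuinely different proof.
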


\begin{proof}
Note that $K(x,y)$ and $K(x,y)^{-1}$ are $\partial_{x}$-algebraic over $\Q$. Then 
 $C(x,y)$ is $\partial_{x}$-algebraic over $\Q$ if and only if $K(x,y)C(x,y)$ is  $\partial_{x}$-algebraic over $\Q$. By \eqref{eq1}, $K(x,y)C(x,y)-c(x)C_{-0}(x^{-1})$ is a function of $y$ and therefore is $\partial_{x}$-algebraic over $\Q$. This shows that (i) and (ii) are equivalent. Similarly, we prove that (iii) and (iv) are equivalent. By Assumption \ref{main_hyp}, the weights $d_{i,j}$ are diagonally symmetric so $C(x,y)=C(y,x)$. This shows that (i) and (iii) are equivalent.\par 
Since $f(x)\in \Q[x,t]$ and $\widehat{c}(y)=f(1/y)\in \Q[y,t]$,  (iv) is equivalent to the fact that $f(x)C_{0-}(x)\in\Q(x)((t))$ is  $\partial_{x}$-algebraic over $\Q$. By Remark \ref{rem1}, this is equivalent to the fact that $f(x)C_{0-}(x)$ is $\partial_{x}$-algebraic over $\C$. The map $\Lambda$ locally induces a bijection. 
By construction, we have 
 $$ (fC_{0-}) \circ \Lambda=\rx.$$
Since $\Lambda$ involves the $\partial_{\omega}$-algebraic functions $\wp,\wp'$, we find that $\Lambda$  is $\partial_{\omega}$-algebraic over $\C$. By  \cite[Lem.~6.3 and~6.4]{DHRS}, $f(x)C_{0-}(x)$ is $\partial_{x}$-algebraic over $\C$ if and only if (v) holds. 
\end{proof}

\subsection{On the poles of $\mathbf{a}$}\label{sec32}
By Theorem \ref{theo2}, $\rx$  satisfies an equation of the form 
$$\rx(\widetilde{\sigma}^{2}(\omega))=\rx (\omega)+\widetilde{a}(\omega),$$
where 
$$
 \widetilde{a}(\omega)=
y(\omega+2\omega_{3})-y(\omega+\omega_{3})\in\C(\wp, \wp').
$$
In this subsection, we are going to prove a certain number of technical results concerning the poles of $\widetilde{a}$ and their residues. 
To simplify the computations,  we are going to consider $\widetilde{a}$ as a meromorphic function on $\Etproj_{\varphi}$. In what follows we make the following abuse of notation. By $\sigma^{n}(y)$, we denote the projection on the second coordinate of the map $\sigma^{n}(x,y)$. Let $$\mathbf{a}:=\sigma^{2}(y)-\sigma(y),$$ such that $ \widetilde{a}=\mathbf{a}\circ \Lambda$. \par 
Let us begin by the computation of the poles of $\mathbf{a}$. 
 \begin{lem}\label{lem4}
 The following holds:
 \begin{itemize}
 \item  If $d^{\varphi}_{0,1}=0$, then the poles of $\mathbf{a}$ are double and equal to $$\sigma^{-1}([0:1],[1:0]), \quad \sigma^{-2}([0:1],[1:0]).$$
 \item  If $d^{\varphi}_{0,1}\neq 0$, then
the poles of $\mathbf{a}$ are simple  and their set is included in  
$$\{\sigma^{-1}(P_1), \sigma^{-1}(P_2), \sigma^{-2}(P_1), \sigma^{-2}(P_2)\},$$
where
  $$P_1=([0:1],[1:0])\hbox{ and } P_2=([-d^{\varphi}_{0,1}:d^{\varphi}_{1,1}],[1:0]).$$
 \end{itemize}

 \end{lem}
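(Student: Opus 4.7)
My plan is to reduce the pole analysis of $\mathbf{a}$ to that of $y$ on $\Etproj_{\varphi}$. Indeed, $\mathbf{a}=\sigma^{2}(y)-\sigma(y)=y\circ\sigma^{2}-y\circ\sigma$, and since $\sigma$ is an automorphism of the elliptic curve $\Etproj_{\varphi}$ by \cite[Prop.~3.1]{DHRS20}, the pole divisor of $\sigma^{k}(y)$ is the image under $\sigma^{-k}$ of the pole divisor of $y$, with identical multiplicities. It therefore suffices to identify the poles of $y$ and then transport them.

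First I would compute the polar divisor of $y$ by intersecting $\Etproj_{\varphi}$ with $\{y_{1}=0\}$. Substituting $y_{1}=0$ in the bihomogeneous polynomial $\overline{K}_{\varphi}(x_{0},x_{1},y_{0},y_{1})$ and using Lemma~\ref{lem1}, which yields $d^{\varphi}_{-1,1}=0$, the equation reduces to
$$
t\,x_{0}\bigl(d^{\varphi}_{0,1}x_{1}+d^{\varphi}_{1,1}x_{0}\bigr)=0.
$$
If $d^{\varphi}_{0,1}=0$, this collapses to $t\,d^{\varphi}_{1,1}x_{0}^{2}=0$, with $d^{\varphi}_{1,1}\neq 0$ since otherwise $y$ would be a regular function on the elliptic curve, hence constant, contradicting Assumption~\ref{main_hyp2}. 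Thus $y$ has a single double pole at $P_{1}=([0:1],[1:0])$. If $d^{\varphi}_{0,1}\neq 0$, the equation has two distinct simple roots $P_{1}$ and $P_{2}=([-d^{\varphi}_{0,1}:d^{\varphi}_{1,1}],[1:0])$, giving simple poles of $y$ at $P_{1}$ and $P_{2}$. Since both $\sigma(y)$ and $\sigma^{2}(y)$ then have only simple poles, $\mathbf{a}$ has at most simple poles, located within $\{\sigma^{-1}(P_{1}),\sigma^{-1}(P_{2}),\sigma^{-2}(P_{1}),\sigma^{-2}(P_{2})\}$, which proves the second bullet.

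For the first bullet I still need to check that $\sigma^{-1}(P_{1})\neq\sigma^{-2}(P_{1})$ and that no cancellation occurs. By \eqref{eq:expression_group_universal_cover}, $\widetilde{\sigma}$ is translation by $\omega_{3}$ on $\C$; if $\omega_{3}$ belonged to $\Z\omega_{1}+\Z\omega_{2}$, then $\sigma$ would be the identity on $\Etproj_{\varphi}$, forcing $\iota_{1}=\iota_{2}$. This is impossible since $\iota_{1}$ fixes the first coordinate while $\iota_{2}$ fixes the second, both being non-trivial involutions because $K_{\varphi}$ has degree two in both variables on the generic elliptic curve. Hence $\sigma$ is fixed-point free on $\Etproj_{\varphi}$, so the two double poles of $\sigma(y)$ and $\sigma^{2}(y)$ lie at the distinct points $\sigma^{-1}(P_{1})$ and $\sigma^{-2}(P_{1})$ respectively; at each of these, only one summand of $\mathbf{a}$ is singular, so $\mathbf{a}$ has a double pole there and no other poles.

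The main obstacle I foresee is the fixed-point analysis of $\sigma$ on $\Etproj_{\varphi}$, i.e.\ establishing $\omega_{3}\notin\Z\omega_{1}+\Z\omega_{2}$, which is what forbids an accidental merging of the two candidate double poles in the first case. The remainder is a direct intersection computation at $y=\infty$ followed by transport under the automorphism $\sigma$.
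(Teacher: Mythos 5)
Your argument is correct and follows essentially the same route as the paper: compute the polar divisor of $y$ by solving $\overline{K}_{\varphi}(x_0,x_1,1,0)=0$, transport the poles by the automorphism $\sigma$, and note that $\sigma^{-1}(P_1)\neq\sigma^{-2}(P_1)$. Your extra justifications (that $d^{\varphi}_{1,1}\neq 0$ when $d^{\varphi}_{0,1}=0$, and that $\sigma$ is fixed-point free since $\omega_3\notin\Z\omega_1+\Z\omega_2$) simply make explicit what the paper leaves implicit in ``by construction of $\sigma$''.
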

 
 \begin{proof}
  Let us begin by computing the poles of $y$ and their multiplicities. It suffices to solve  
$$\overline{K}_{\varphi}(x_0,x_1,1,0)=d^{\varphi}_{0,1} x_0 x_1+d^{\varphi}_{1,1}  x_0^{2}=0.$$
We then find that 
\begin{itemize}
\item if $d^{\varphi}_{0,1}=0$, then $y$ has a double pole  $([0:1],[1:0])$.
\item  if $d^{\varphi}_{0,1}\neq 0$, then $y$ has two simple poles,  $([-d^{\varphi}_{0,1}:d^{\varphi}_{1,1}],[1:0])$ and $([0:1],[1:0])$.
\end{itemize}
Note that $P$ is a pole of $y$ if and only if $\sigma^{-n}(P)$ is a pole of $\sigma^{n}(y)$.  By construction of~$\sigma$, $\sigma^{-1}([0:1],[1:0])\neq  \sigma^{-2}([0:1],[1:0])$, concluding the proof.
 \end{proof}
 
 Let us now focus on the dynamics of the poles of $\mathbf{a}$ with respect to $\sigma^{2}$.
We define an equivalence relation on $\Etproj_{\varphi}$ as follows. We say that
 $A,B\in \Etproj_{\varphi}$ satisfy $A\sim B$ if and only if there exists $n\in \Z$, such that $\sigma^{2n}(A)=B$. Then, given poles $A$ and $B$ of $\mathbf{a}$, we want to determine whether $A\sim B$ or not.\par 

 A first partial result is the following. 

\begin{lem}\label{lem5}
Let us assume that $\sigma^{2}$ has infinite order. Let $P\in \Etproj_{\varphi}$. Then, $P\not\sim \sigma(P)$. 
\end{lem}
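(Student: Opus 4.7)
The plan is to move to the universal cover, where $\sigma$ becomes a translation and the statement reduces to an elementary observation about orders of translations on an elliptic curve. By \eqref{eq:expression_group_universal_cover}, $\widetilde{\sigma}(\omega) = \omega + \omega_3$, so $\widetilde{\sigma}^k(\omega) = \omega + k\omega_3$ for every $k\in\Z$. The hypothesis $P \sim \sigma(P)$ means, by definition, that there exists $n \in \Z$ with $\sigma^{2n}(P) = \sigma(P)$; since $\sigma$ is bijective on $\Etproj_{\varphi}$, this is equivalent to $\sigma^{2n-1}(P) = P$. I would argue by contradiction, assuming such an $n$ exists.

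Picking a preimage $\omega_P \in \C$ with $\Lambda(\omega_P) = P$, the identity $\sigma^{2n-1}(P) = P$ lifts to $(2n-1)\omega_3 \in \Z\omega_1 + \Z\omega_2$. This condition is independent of $\omega_P$ and forces $\widetilde{\sigma}^{2n-1}$ to be a translation by an element of the lattice, hence to descend to the identity on $\Etproj_{\varphi} = \C/(\Z\omega_1 + \Z\omega_2)$. Thus $\sigma^{2n-1} = \mathrm{id}$. Because $2n-1$ is odd and in particular nonzero, $\sigma$ has finite order dividing $|2n-1|$, so $\sigma^2$ has finite order dividing $|2n-1|$ as well. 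This contradicts the assumption that $\sigma^2$ has infinite order, and therefore $P \not\sim \sigma(P)$.

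Conceptually this is just the familiar fact that a nontrivial translation on an elliptic curve has no fixed points; the only care needed is the parity bookkeeping in $2n-1$, which is exactly why the conclusion holds as soon as $\sigma^2$ (and not merely $\sigma$) has infinite order. I do not foresee any genuine obstacle: the lifting via $\Lambda$ and the explicit form of $\widetilde{\sigma}$ handed to us by \eqref{eq:expression_group_universal_cover} reduce the whole question to arithmetic in $\C/(\Z\omega_1 + \Z\omega_2)$.
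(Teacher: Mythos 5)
Your argument is correct and is essentially the paper's proof: both reduce $P\sim\sigma(P)$ to $\sigma^{2n-1}(P)=P$, lift to the universal cover where $\widetilde{\sigma}^{2n-1}$ is translation by $(2n-1)\omega_3$, conclude that a translation with a fixed point is the identity, and contradict the infinite order of $\sigma^{2}$ using that $2n-1\neq 0$. Your explicit remark that $(2n-1)\omega_3$ must lie in the lattice is just a spelled-out version of the paper's one-line justification.
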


\begin{proof}
To the contrary, assume the existence of $n\in \Z$ such that $\sigma^{2n}(P)=\sigma(P)$. Then, $\sigma^{2n-1}(P)=P$. 
 Since $\widetilde{\sigma}^{2n-1}(\omega)=\omega+(2n-1)\omega_3$, we find that $\sigma^{2n-1}$ fixes one point if and only if it is the identity. This contradicts the assumption that $\sigma^{2}$ has infinite order.
\end{proof}

Note that the fact that $\sigma^{2}$ has infinite order is crucial in the proof of Lemma~\ref{lem5}.
Let us now focus on the residues of $\widetilde{a}$ when the poles are simple.
 By Lemma \ref{lem4}, we may reduce to the case where $d_{1,1}=d^{\varphi}_{0,1}\neq 0$.
If we see $y$ as a meromorphic function in the $\omega$-plane, we see that, modulo the lattice $\omega_{1}\Z+\omega_{2}\Z$, it has two simple poles corresponding to $P_1$ and $P_2$. Let us see $\Lambda$ as a bijection from $\C/(\omega_{1}\Z+\omega_{2}\Z)$ to $\Etproj_{\varphi}$. Let $[\omega_{y,1}],[\omega_{y,2}]\in \C/(\omega_{1}\Z+\omega_{2}\Z)$ be such that $\Lambda ([\omega_{y,i}])=P_i $. Since $y$ is an elliptic function, the sum of the residues in a fundamental parallelogram is zero. Since the poles of $y$ are simple, their residues are nonzero. This shows that if $0\neq \alpha$ is the residue at $[\omega_{y,1}]$, then the residue at $[\omega_{y,2}]$ is $-\alpha$. Furthermore, it is clear that the residue at $\omega_{0}$ of $f(\omega)\in \mathcal{M}(\C)$ is equal to the residue at  $\omega_{0}-\omega_{3}$ of $f(\widetilde{\sigma}(\omega))$.  
If the  poles $\sigma^{-1}(P_1), \sigma^{-1}(P_2), \sigma^{-2}(P_1), \sigma^{-1}(P_2)$ are distinct, 
this shows that the residue of $\widetilde{a}(\omega)=y(\omega+2\omega_{3})-y(\omega+\omega_{3})$ at $[\omega_{y,1}]-\omega_{3}$ is $-\alpha$, while the residue at $[\omega_{y,2}]-\omega_{3}$ is $\alpha$. Furthermore, the residue of $\widetilde{a}(\omega)$ at $[\omega_{y,1}]-2\omega_{3}$  is $\alpha$, while the residue at $[\omega_{y,2}]-2\omega_{3}$ is $-\alpha$.
To summarize, we have proved that when the  poles $\sigma^{-1}(P_1), \sigma^{-1}(P_2), \sigma^{-2}(P_1), \sigma^{-2}(P_2)$ are distinct the residues of the simple poles of $\widetilde{a}(\omega)$ are equal to 
$$\begin{array}{|l|l|l|l|l|}\hline
\hbox{Pole} & [\omega_{y,1}]-\omega_{3}&[\omega_{y,1}]-2\omega_{3}&[\omega_{y,2}]-\omega_{3}&[\omega_{y,2}]-2\omega_{3}  \\\hline
\hbox{Residue} &-\alpha &\alpha&\alpha&-\alpha \\\hline
\end{array}$$
If the  poles $\sigma^{-1}(P_1), \sigma^{-1}(P_2), \sigma^{-2}(P_1), \sigma^{-2}(P_2)$ are not distinct, we just have to sum up the corresponding residues.

 \subsection{Differential algebraicity}

This subsection is devoted to criteria ensuring that the generating series is differentially algebraic. This is one of our main results.

\begin{thm}\label{thm3}
Suppose that Assumptions \ref{main_hyp} and \ref{main_hyp2} hold. 
Assume further that $d_{1,1}=d^{\varphi}_{0,1}\neq  0$ and that there exists $k\in \Z$, such that for all $0<t<1$, $P_1=\sigma^{2k}(P_2)$. Then, $C(x,y;t)$ is D-algebraic.
\end{thm}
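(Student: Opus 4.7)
The plan is to reduce, via Lemma~\ref{lem2}, to showing that the function $r_x$ from Theorem~\ref{theo2} is $\partial_\omega$-algebraic over $\C$. Recall that $r_x$ is meromorphic on $\C$, $\omega_1$-periodic, and satisfies $r_x(\omega+2\omega_3)-r_x(\omega)=\widetilde a(\omega)$ with $\widetilde a\in\C(\wp,\wp')$. My strategy is to build an elliptic function $b\in\C(\wp,\wp')$ for the lattice $\Z\omega_1+\Z\omega_2$ satisfying $b(\omega+2\omega_3)-b(\omega)=\widetilde a(\omega)+c'$ for some $c'\in\C$. Then $\phi:=r_x-b$ is $\omega_1$-periodic with $\phi(\omega+2\omega_3)-\phi(\omega)=-c'$, so the derivative $\phi'=\partial_\omega\phi$ is both $\omega_1$- and $2\omega_3$-periodic. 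Since $\omega_1$ is purely imaginary and $2\omega_3$ is positive real, these periods are $\R$-linearly independent, and $\phi'$ is therefore elliptic for the new lattice $\Z\omega_1+2\omega_3\Z$. Elliptic functions are $\partial_\omega$-algebraic over $\C$; $\phi$ then inherits this property (any polynomial relation among $\phi',\phi'',\dots$ is also one among $\phi,\phi',\dots$), and so does $r_x=b+\phi$.

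To construct $b$, I invoke the pole data of $\widetilde a$ computed in Lemma~\ref{lem4} and Section~\ref{sec32}. Under $d_{1,1}\neq 0$, $\widetilde a$ has four simple poles at $\omega_{y,i}-j\omega_3$, $i,j\in\{1,2\}$, with residues $\mp\alpha$, $\pm\alpha$. The hypothesis $P_1=\sigma^{2k}(P_2)$ reads $\omega_{y,1}\equiv\omega_{y,2}+2k\omega_3$ modulo $\Z\omega_1+\Z\omega_2$; after exchanging $P_1,P_2$ if necessary, one may assume $k>0$ (the case $k=0$ is excluded since $d_{1,1}\neq 0$ forces $P_1\neq P_2$). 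The four poles then align on the single progression $\omega_{y,2}+m\omega_3$, $m\in\{-2,-1,2k-2,2k-1\}$. Writing $\zeta$ for the Weierstrass zeta of $\Z\omega_1+\Z\omega_2$, set
\[
b(\omega):=-\alpha\sum_{j=0}^{k-1}\zeta\bigl(\omega-\omega_{y,2}-2j\omega_3\bigr)+\alpha\sum_{j=1}^{k}\zeta\bigl(\omega-\omega_{y,2}-(2j-1)\omega_3\bigr).
\]
Since the prescribed residues sum to $-k\alpha+k\alpha=0$, Legendre's quasi-period relation $\zeta(\omega+\omega_i)=\zeta(\omega)+\eta_i$ makes $b$ genuinely elliptic on $\Z\omega_1+\Z\omega_2$. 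A direct telescoping computation (the ``interior'' poles pairwise cancel) shows that $b(\omega+2\omega_3)-b(\omega)$ has exactly the same principal parts as $\widetilde a$ at each of the four poles, so the difference $b(\omega+2\omega_3)-b(\omega)-\widetilde a(\omega)$ is a pole-free elliptic function, i.e.\ the claimed constant $c'$.

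Combining the two steps finishes the argument via Lemma~\ref{lem2}. The main technical point lies in the construction of $b$, and the hypothesis $P_1=\sigma^{2k}(P_2)$ is used precisely to align the four poles of $\widetilde a$ along a single $\sigma^2$-orbit modulo the lattice; without this alignment no elliptic antidifference exists, which is also why the other three models of Figure~\ref{fig:symmetric_models_infinite} are instead differentially transcendental (Theorem~\ref{thm2}). A secondary technical subtlety is the passage from $\phi'$ to $\phi$: one uses the fact that a polynomial relation $P(\phi',\dots,\phi^{(n)})=0$ immediately gives the relation $P(\partial\phi,\dots,\partial^n\phi)=0$, i.e.\ $Q(\phi,\phi',\dots,\phi^{(n)})=0$ with $Q(y_0,y_1,\dots,y_n):=P(y_1,\dots,y_n)$ nonzero as a polynomial in $(y_0,\dots,y_n)$, so the $\partial_\omega$-algebraicity transfers from $\phi'$ to $\phi$.
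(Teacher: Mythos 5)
Your construction of the elliptic antidifference is correct and is essentially the paper's own telescoping argument in different clothing: the paper builds $\mathfrak{g}(\omega)=\sum_{\ell=0}^{k-1}\mathfrak{f}(\omega+2\ell\omega_3)$ out of translates of $1/(\wp(\omega-\omega_{y,1}+3\omega_3/2)-\wp(\omega_3/2))$ with residues adjusted to $\pm\alpha$, whereas you use differences of Weierstrass $\zeta$-functions; both match the pole/residue table of $\widetilde a$ obtained from Lemma~\ref{lem4} and Section~\ref{sec32} under $P_1=\sigma^{2k}(P_2)$. Your trick of passing to $\phi'$ to absorb the additive constant $c'$ is in fact a cleaner justification of the step where the paper asserts that $\mathfrak{g}-r_x$ is $2\omega_3$-periodic, and your transfer of $\partial_\omega$-algebraicity from $\phi'$ back to $\phi$ is fine. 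Via Lemma~\ref{lem2} this correctly yields that $C(x,y;t)$ is $\partial_x$- and $\partial_y$-algebraic over $\Q$. (A small point you should at least mention: Theorem~\ref{thm3} does not assume \ref{main_hyp3}, so $\sigma$ may have finite order and the four poles need not be pairwise distinct; one then has to add up principal parts, which is harmless.)

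There is, however, a genuine gap: in this paper ``$C(x,y;t)$ is D-algebraic'' means that \emph{all three} maps $x\mapsto C$, $y\mapsto C$ and $t\mapsto C$ satisfy nontrivial algebraic differential equations, and your argument says nothing about the derivative in $t$. This is not a formality—it is where most of the paper's proof is spent, and it is the only place where the hypothesis ``for all $0<t<1$, $P_1=\sigma^{2k}(P_2)$'' (rather than for one fixed $t$) is actually needed. Concretely, the paper tracks that $\omega_1,\omega_2,\omega_3$, $\wp$, $\omega_{y,1}$, the residue $\alpha$ and hence $\mathfrak{f}$ and $\mathfrak{g}$ are $(\partial_\omega,\partial_t)$-algebraic; it then shows that the poles of $r_x(\omega;t)$ and their residues are $\partial_t$-algebraic (via the analysis of $g_{x,y}$ on $\mathcal{O}$ and the recursion \eqref{eq6}), builds an auxiliary $(\omega_1,2\omega_3)$-elliptic function $\mathfrak{h}_0$ with the same poles as $\mathfrak{h}=\mathfrak{g}-r_x$ to conclude that $\mathfrak{h}$, hence $r_x$, is $\partial_t$-algebraic; and finally transfers this to $C$ through $(fC_{0-})\circ\Lambda=r_x$, the diagonal symmetry $C(x,y)=C(y,x)$, and \eqref{eq1}. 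In your setup the analogous missing work is to show that $b$ (i.e.\ $\zeta$, $\omega_{y,2}$, $\alpha$, $\omega_3$ as functions of $t$) and, more delicately, the doubly periodic remainder $\phi$ (equivalently $\phi'$) are $\partial_t$-algebraic; ellipticity in $\omega$ for each fixed $t$ gives no control in the $t$-direction, so this step requires the pole/residue analysis of $r_x$ that the paper carries out, followed by the final descent to $C_{0-}$, $C_{-0}$ and $C$. As written, your proof establishes only the $\partial_x$- and $\partial_y$-algebraicity in the conclusion of Theorem~\ref{thm3}.
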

\begin{proof} 
We emphasize that although the $t$ dependencies are not mentioned, almost every function depends on $t$.
By Lemma \ref{lem4}, the poles of $\mathbf{a}$ are simple and their set is included in  $\{\sigma^{-1}(P_1), \sigma^{-1}(P_2), \sigma^{-2}(P_1), \sigma^{-2}(P_2)\}$. Furthermore, $P_1\neq P_2$, and thus $k\neq 0$. Up to interchanging $P_1$ and $P_2$, we may reduce to the case where $k>0$.\par 
We are going to use the following classical properties of $\wp$. It satisfies ${\wp(\omega)=\wp(-\omega)}$,  $\wp'(\omega)=-\wp'(-\omega)$ and for all $\omega_0 \in \C$, the equation $\wp (\omega)=\wp(\omega_0)$ has the solutions ${\pm \omega_0 +\Z \omega_1+ \Z \omega_2}$.  Let us  recall that  $\alpha\neq 0$ is the residue of $y(\omega;t)$ at $[\omega_{y,1}]$ and  $-\alpha$ is the residue of $y(\omega;t)$ at $[\omega_{y,2}]$.  Let $\omega_{y,1}\in \C$  be a representative of $[\omega_{y,1}]$.
  Let $\mathfrak{f}_0(\omega;t)=\frac{1}{\wp(\omega-\omega_{y,1}+3\omega_3/2)-\wp(\omega_3/2)}$. It is an elliptic function with two simple poles at $[\omega_{y,1}]-\omega_3$ and $[\omega_{y,1}]-2\omega_3$ with residues $ \wp'(\omega_3/2)^{-1}$ and $ -\wp'(\omega_3/2)^{-1}$, respectively.
  There exists $c(t)$, such that $\mathfrak{f}(\omega;t)=c(t)\mathfrak{f}_0 (\omega;t)$ is an elliptic function with two simple poles at $[\omega_{y,1}]-\omega_3$ and $[\omega_{y,1}]-2\omega_3$ with respective residues $\alpha$ and $-\alpha$. \par 
In what follows we will say that  a bivariate holomorphic function $h(\omega;t)$ is $(\partial_{\omega},\partial_{t})$-algebraic if it is both $\partial_{\omega}$-algebraic and $\partial_{t}$-algebraic over $\C$.  
We claim that   $\mathfrak{f}(\omega;t)$ is $(\partial_{\omega},\partial_{t})$-algebraic.
By \cite[Lem.~6.10]{bernardi2017counting}, $\omega_1,\omega_2,\omega_3$ are $\partial_{t}$-algebraic over $\C$. By \cite[Prop.~6.7 and Lem.~6.10]{bernardi2017counting}, see also \cite[Prop.~3.5]{dreyfusdiffalg}, $\wp$ is  $(\partial_{\omega},\partial_{t})$-algebraic.  In what follows, we are going to use several times the fact that the differential algebraic functions form a field that is stable under composition, see \cite[Cor.~6.4 and Prop.~6.5]{bernardi2017counting}. With \cite[(2.16)]{dreyfus2017differential}, this shows that $\omega_{y,1}$ and $y(\omega;t)$ are $\partial_t$-algebraic over $\C$, see also \cite[Cor.~3.7]{dreyfusdiffalg}.
Since the product of two differentially algebraic functions is differentially algebraic, we find that 
$(\omega-\omega_{y,1})y(\omega;t)$ is $\partial_t$-algebraic over $\C$. Evaluating the differential algebraic equation at $\omega=\omega_{y,1}$ shows that the residue $\alpha$ is $\partial_t$-algebraic over $\C$. 
 In the same way, we prove that $\mathfrak{f}_{0}(\omega;t)$ and its residues are $\partial_{t}$-algebraic over $\C$. Since the residues of $\mathfrak{f}_0(\omega;t)$ and $\mathfrak{f}(\omega;t)$ are $\partial_{t}$-algebraic over $\C$, we find that $c(t)$, such that $\mathfrak{f}(\omega;t)=c(t)\mathfrak{f}_0(\omega;t)$, is also $\partial_{t}$-algebraic over $\C$. We then deduce that $\mathfrak{f}(\omega;t)$ is $(\partial_{\omega},\partial_{t})$-algebraic.   Let $\mathfrak{g}(\omega;t)=\sum_{\ell=0}^{k-1}\mathfrak{f}(\omega+2\ell \omega_3 ;t)$. From what precedes, $\mathfrak{g}(\omega;t)$ is $(\partial_{\omega},\partial_{t})$-algebraic.
Then,
$\mathfrak{g}(\widetilde{\sigma}^{2}(\omega);t)-\mathfrak{g}(\omega;t)$ has only simple  poles  with corresponding residues
$$\begin{array}{|l|l|l|l|l|}\hline
\hbox{Pole} & [\omega_{y,1}]-(2k+2)\omega_{3}&[\omega_{y,1}]-(2k+1)\omega_{3}&[\omega_{y,1}]-\omega_{3}&[\omega_{y,1}]-2\omega_{3}  \\\hline
\hbox{Residue} &-\alpha &\alpha&-\alpha&\alpha \\\hline
\end{array}$$
On the other hand, recall that the poles of $\widetilde{a}(\omega;t)$ are simple and equal to 
$$\begin{array}{|l|l|l|l|l|}\hline
\hbox{Pole} & [\omega_{y,1}]-\omega_{3}&[\omega_{y,1}]-2\omega_{3}&[\omega_{y,2}]-\omega_{3}&[\omega_{y,2}]-2\omega_{3}  \\\hline
\hbox{Residue} &-\alpha &\alpha&\alpha&-\alpha \\\hline
\end{array}$$
We now use $P_1 =\sigma^{2k}( P_2)$ to deduce that 
$$\begin{array}{|l|l|l|l|l|}\hline
\hbox{Pole} & [\omega_{y,1}]-\omega_{3}&[\omega_{y,1}]-2\omega_{3}&[\omega_{y,1}]-(2k+1)\omega_{3}&[\omega_{y,1}]-(2k+2)\omega_{3} \\\hline
\hbox{Residue} &-\alpha &\alpha&\alpha&-\alpha \\\hline
\end{array}$$
Then, $\mathfrak{g}(\widetilde{\sigma}^{2}(\omega);t)-\mathfrak{g}(\omega;t)$ and $\widetilde{a}(\omega;t)$ have the same poles and residues. This shows that $\mathfrak{g}(\widetilde{\sigma}^{2}(\omega);t)-\mathfrak{g}(\omega;t)-\widetilde{a}(\omega;t)$ is an elliptic function with no poles, and it is therefore constant with respect to $\omega$.
Recall, see Theorem~\ref{theo2}, that $\rx (\widetilde{\sigma}^{2}(\omega);t)-\rx (\omega;t)=\widetilde{a}(\omega;t)$.
Then,
$\mathfrak{g}(\omega;t)-\rx(\omega;t)$ is $\widetilde{\sigma}^{2}$-invariant, i.e., it is $2\omega_3$-periodic. Since $\mathfrak{g}(\omega;t),\rx(\omega;t)$ are $\omega_{1}$-periodic, by Theorem \ref{theo2}, we find that for all $0<t<1$, $\mathfrak{g}(\omega;t)-\rx(\omega;t)=\mathfrak{h}(\omega;t)\in \C(\wp_{1,3},\wp'_{1,3})$, where $\wp_{1,3}$ denotes the Weierstrass function with respect to the periods $\omega_{1},2\omega_{3}$.\par 
  We claim that the poles of $\rx(\omega;t)$  in the $\omega$-plane and their corresponding residues are $\partial_{t}$-algebraic over $\C$. We now use the notations of Section \ref{sec22}. On the set $\mathcal{O}$ with $|x(\omega;t)|<1$, $\rx (\omega;t)$ has no poles. On the set $\mathcal{O}$ with $|y(\omega;t)|<1$, the poles of $\rx (\omega;t)$ are necessarily poles of $g_{x,y}(\omega;t)$. The latter have to be poles of $x(\omega;t)$.  On $\Etproj_{\varphi}$, due to~\eqref{eq:kernel_expanded},
  $$g_{x,y}(\omega;t)=-x(\omega;t)\widehat{a}_{\varphi}(y(\omega;t))-\frac{\widehat{b}_{\varphi}(y(\omega;t))}{2}=x(\omega;t)^{-1}\widehat{c}_{\varphi}(y(\omega;t))+\frac{\widehat{b}_{\varphi}(y(\omega;t))}{2},$$ proving that $g_{x,y}(\omega;t)$ has no poles on $\mathcal{O}$ with $|y(\omega;t)|<1$. Then, $\rx (\omega;t)$ has no poles on $\mathcal{O}$. Recall that the poles of $y(\omega;t)$ are $\partial_{t}$-algebraic and the residues are $\partial_{t}$-algebraic.  By \eqref{eq6} and $\displaystyle \bigcup_{\ell\in \Z}\widetilde{\sigma}^{\ell}(\mathcal{O})=\C$, we deduce that the poles of  $\rx(\omega;t)$  in the $\omega$-plane and their corresponding residues are $\partial_{t}$-algebraic over $\C$. This proves our claim.\par 
  Since the poles of  $\mathfrak{g}(\omega;t)$  in the $\omega$-plane and their corresponding residues are $\partial_{t}$-algebraic over $\C$,  the poles of  $\mathfrak{g}(\omega;t)-\rx(\omega;t)=\mathfrak{h}(\omega;t)$ in the $\omega$-plane and their corresponding residues are $\partial_{t}$-algebraic over $\C$.
For $a,b\in \C$, the function $\frac{1}{\wp_{1,3}(\omega-\frac{a+b}{2})-\wp_{1,3}(\frac{a-b}{2})}$ has simple poles which are located to $a+\Z \omega_1 +\Z 2\omega_3$ and $b+\Z \omega_1 +\Z 2\omega_3$.
With the same reasoning as for $\wp(\omega;t)$, we deduce that the elliptic function $\wp_{1,3}(\omega;t)$ is $(\partial_{\omega},\partial_{t})$-algebraic. 
If $a(t),b(t)$ are $\partial_{t}$-algebraic over $\C$, it follows that $\frac{1}{\wp_{1,3}(\omega-\frac{a(t)+b(t)}{2})-\wp_{1,3}(\frac{a(t)-b(t)}{2})}$,  $\wp_{1,3}(\omega-a(t))$, and $\wp'_{1,3}(\omega-a(t))$ are $(\partial_{\omega},\partial_{t})$-algebraic.
There exists $\mathfrak{h}_0( \omega;t)$, that may be written as a product of functions of the latter form, such that $\mathfrak{h}_0( \omega;t)$ has the same poles as $\mathfrak{h}( \omega;t)$, with the same order. Then, $\mathfrak{h}(\omega;t)/\mathfrak{h}_0(\omega;t)$ is an elliptic function with no poles, it thus does not depend on $\omega$.
Same reasoning as for $y(\omega;t)$ shows that since $\mathfrak{h}_0(\omega;t)$ and its poles are  $\partial_t$-algebraic over $\C$, its residues are $\partial_t$-algebraic over $\C$.
Furthermore, $\mathfrak{h}(\omega;t)$ has  $\partial_{t}$-algebraic residues, which implies that $\mathfrak{h}(\omega;t)/\mathfrak{h}_0(\omega;t)$ is $\partial_{t}$-algebraic over $\C$. Since $\mathfrak{h}_0(\omega;t)$
 is $(\partial_{\omega},\partial_{t})$-algebraic, we find that 
  $\mathfrak{h}(\omega;t)$
 is $(\partial_{\omega},\partial_{t})$-algebraic. This shows that 
$\rx(\omega;t)= \mathfrak{g}(\omega;t)-\mathfrak{h}(\omega;t)$ is $(\partial_{\omega},\partial_{t})$-algebraic.\par 
By Lemma \ref{lem2}, $C(x,y;t)$ is both $\partial_{x}$-algebraic and $\partial_{y}$-algebraic over $\Q$.  It remains to consider the $t$ derivative.
As we have seen in the proof of Lemma \ref{lem2}, $(fC_{0-}) \circ \Lambda=\rx$. We have proved that $y(\omega;t)$ is $\partial_t$-algebraic over $\C$, and in the same way we may prove that $x(\omega;t)$ is $\partial_t$-algebraic over $\C$. Hence,
 the coordinates of the map $\Lambda$ are $\partial_{t}$-algebraic over $\C$, see also \cite[Cor.~3.7]{dreyfusdiffalg}. Then, $C_{0-}(y^{-1})$ is $\partial_{t}$-algebraic over $\C$. Since the weights are diagonally symmetric, $C(x,y;t)=C(y,x;t)$, and $C_{-0}(x^{-1})$ is also $\partial_{t}$-algebraic over $\C$.  By \cite[Prop.~8, P.~101]{kolchin1973differential}, $C_{-0}(x^{-1})$, and 
 $C_{0-}(y^{-1})$ are $\partial_{t}$-algebraic over $\Q$. By \eqref{eq1}, $K(x,y)C(x,y)$ is $\partial_{t}$-algebraic over $\Q$. Then, $C(x,y)$ is $\partial_{t}$-algebraic over $\Q$. This concludes the proof. 
\end{proof}

\begin{ex}\label{ex1}
We may consider  a weighted example. Let $\lambda,\mu\in \Q\cap(0,1)$ such that $\lambda+4\mu=1$. Consider the weighted model with $d_{\pm 1,0}=d_{0,\pm 1}=\mu$, $d_{1,1}=\lambda$, and the other $d_{i,j}$ are $0$ (the condition $\lambda+4\mu=1$ ensures that $\sum_{i,j} d_{i,j}=1$) $$\begin{tikzpicture}[scale=1, baseline=(current bounding box.center)]
\draw[thick,->](0,0)--(-1,0);
\draw[thick,->](0,0)--(1,0);
\draw[thick,->](0,0)--(0,1);
\draw[thick,->](0,0)--(0,-1);
\draw[thick,->](0,0)--(1,1);
\put(-10,20){{$\mu$}}
\put(25,15){{$\lambda$}}
\put(-10,-25){{$\mu$}}
\put(25,-10){{$\mu$}}
\put(-25,-10){{$\mu$}}
\end{tikzpicture}$$
Let $C_{\lambda,\mu}(x,y;t)$ be the corresponding generating series.
 After the change of variable $\varphi$ we obtain the following step set
$$\begin{tikzpicture}[scale=1, baseline=(current bounding box.center)]
\draw[thick,->](0,0)--(-1,0);
\draw[thick,->](0,0)--(1,0);
\draw[thick,->](0,0)--(0,1);
\draw[thick,->](0,0)--(-1,-1);
\draw[thick,->](0,0)--(1,1);
\put(-10,20){{$\lambda$}}
\put(25,15){{$\mu$}}
\put(-15,-25){{$\mu$}}
\put(25,-10){{$\mu$}}
\put(-25,-10){{$\mu$}}
\end{tikzpicture}$$
We have $$P_1=([0:1],[1:0])\hbox{ and } P_2=([-\lambda:\mu],[1:0]).$$ 
As in \cite[Sec.~6]{DHRS}, the following orbit holds: 
$$
\xymatrix{
    & P_{1} =([0:1],[1:0])  \ar[ld]^{\iota_{1}} \ar[d]_{\sigma} \\
    ([0:1],[-1:1]) \ar[r]_{\iota_{2}} &([1:0],[-1:1]) \ar[ld]^{\iota_{1}} \ar[d]_{\sigma}\\
    ([1:0],[0:1]) \ar[r]_{\iota_{2}}  & ([1:0],[0:1]) \ar[ld]^{\iota_{1}} \ar[d]_{\sigma} \\
   ([1:0],[-1:1])  \ar[r]_{\iota_{2}}  & ([0:1],[-1:1]) \ar[ld]^{\iota_{1}} \ar[d]_{\sigma}  \\
  ([0:1],[1:0]) \ar[r]_{\iota_{2}}  &  P_{2} = ([-\lambda:\mu],[1:0]). 
  }
  $$
Therefore,  $\sigma^{4}(P_1)=P_2$, for all $0<t<1$.
By Theorem \ref{thm3}, we then obtain that $C_{\lambda,\mu}(x,y;t)$ is $D$-algebraic.
\end{ex}

 \subsection{Differential transcendence}

In this section, we want to prove results on differential transcendence. Thanks to Lemma \ref{lem2}, it suffices to prove that $\rx(\omega)$ is $\partial_{\omega}$-transcendental over $\C$ for a fixed transcendental value of $t\in (0,1)$. \par 
From now on, let us fix $t\in (0,1)$ transcendental  and let us
 make the following assumption:

\begin{enumerate}[label=\textcolor{red}{\textbf{(A3)}},ref={\rm (A3)}]
     \item\label{main_hyp3} The automorphism $\sigma$ has infinite order.
\end{enumerate}

 In the paper \cite{DHRS},  differential transcendence criteria for the generating series of walks in the quarter plane are given. Similarly to this paper, the generating series admits a continuation $f_{x}(\omega)$ that is meromorphic on $\C$ and satisfies a functional equation of the form $f_{x} (\widetilde{\sigma}(\omega))=f_{x} (\omega)+b(\omega)$, with $b(\omega)\in \C(\wp(\omega),\wp'(\omega))$. Under the assumption that $\widetilde{\sigma}$ has infinite order, the difference Galois theory of \cite{HS} is used to provide  differential transcendence criteria based on $b(\omega)$.
Note that by  Assumption \ref{main_hyp3} $\widetilde{\sigma}^2$ has infinite order. More generally,  the criteria of \cite{DHRS} stay valid in this more general context to provide the differential transcendence of $y$, a meromorphic function on $\C$, that is additionally a solution of a functional equation of the form $y (\widetilde{\sigma}^{2}(\omega))=y(\omega)+\tilde{b}(\omega)$, with $\tilde{b}(\omega)\in \C(\wp(\omega),\wp'(\omega))$ and $\widetilde{\sigma}^{2}$ is of infinite order.
By Theorem \ref{theo2}, 
$\rx (\omega)$ is meromorphic on $\C$ and satisfies an equation of the form  $\rx (\widetilde{\sigma}^{2}(\omega))=\rx (\omega)+\widetilde{a}(\omega)$, where $\widetilde{a}(\omega)\in \C(\wp(\omega),\wp'(\omega))$. We are then in the framework where the criteria of \cite{DHRS} apply.

\begin{prop}[Cor.~3.7, \cite{DHRS}]\label{prop4}
Assume that $\mathbf{a}$ has a pole $P$ of order $\geq m$, such that none of the $\sigma^{2\ell}(P)$, with $\ell\in \Z^{*}$, is a pole of order $\geq m$ of $\mathbf{a}$. Then, $\rx$ is $\partial_{\omega}$-transcendental over $\C$.
\end{prop}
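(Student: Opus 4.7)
The plan is to argue by contradiction: suppose $\rx$ is $\partial_\omega$-algebraic over $\C$, equivalently over the field $K=\C(\wp,\wp')$ of elliptic functions on $\Etproj_\varphi$. The argument has two stages---an integrability criterion coming from parametrized difference Galois theory, followed by a dynamical analysis of the poles of $\widetilde a$ along the $\widetilde\sigma^2$-orbit of $P$.

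First, I would invoke the Hardouin--Singer machinery on the difference-differential field $(K,\widetilde\sigma^2,\partial_\omega)$, applied to the telescoper equation $\rx(\omega+2\omega_3)-\rx(\omega)=\widetilde a(\omega)$ from Theorem~\ref{theo2}. In this framework, $\partial_\omega$-algebraicity of $\rx$ over $K$ is equivalent to the existence of a nonzero linear differential operator $L=\sum_{i=0}^{n}c_i\partial_\omega^i$ with constant coefficients $c_i\in\C$ (with $c_n\neq0$) and an elliptic function $g\in K$ such that
\[
L(\widetilde a)(\omega)=g(\omega+2\omega_3)-g(\omega).
\]
This says that some fixed $\C$-linear differential combination of $\widetilde a$ is a $\widetilde\sigma^2$-coboundary in $K$; it is the engine behind the differential transcendence criteria in \cite{DHRS}.

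The local analysis then produces the contradiction. Let $\omega_0\in\C$ be a lift of $P$, so that $\widetilde a$ has a pole of exact order $m_0\geq m$ at $\omega_0$. By the hypothesis on $P$, $\widetilde a$ has pole of order strictly less than $m$ at each translate $\omega_0+2\omega_3\ell$ for $\ell\in\Z^{*}$ (modulo $\omega_1\Z+\omega_2\Z$). Since differentiation raises pole orders by one, $L(\widetilde a)$ has pole of exact order $m_0+n$ at $\omega_0$ and of order strictly less than $m+n\leq m_0+n$ at every other point of the orbit. On the right-hand side, this pole of order $m_0+n$ at $\omega_0$ must originate from a pole of $g$ of the same order either at $\omega_0+2\omega_3$ (via $g(\omega+2\omega_3)$) or at $\omega_0$ (via $-g(\omega)$); say the former, the other case being symmetric after reversing the orbit direction. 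Then $-g(\omega)$ contributes an order $m_0+n$ pole at $\omega_0+2\omega_3$, which must be absorbed by $g(\omega+2\omega_3)$ because the LHS is of strictly smaller order there. This forces $g$ to have a matching pole of order $m_0+n$ at $\omega_0+4\omega_3$ whose top principal parts coincide with those at $\omega_0+2\omega_3$. Iterating, $g$ acquires poles of order $\geq m_0+n$ at every $\omega_0+2\omega_3\ell$ with $\ell\geq1$. By Assumption~\ref{main_hyp3}, $\widetilde\sigma^2$ has infinite order on $\C/(\omega_1\Z+\omega_2\Z)$, so these points are pairwise distinct modulo the lattice; this contradicts the finiteness of the pole set of the elliptic function $g$ in a fundamental domain.

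The main obstacle is the first stage, i.e., producing the telescoping relation satisfied by $\widetilde a$ from the differential algebraicity of $\rx$. This is the deep ingredient, requiring the parametrized Picard--Vessiot formalism and the structure theory of Hardouin--Singer for integrable discrete equations over an elliptic base field. Once that relation is available, the dynamical pole chase of the second stage is elementary and robust, hinging only on the infinite order of $\widetilde\sigma^2$ provided by Assumption~\ref{main_hyp3}, which guarantees that the forced orbit of new poles of $g$ is genuinely infinite modulo the lattice.
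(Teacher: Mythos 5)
Your argument is correct and is, in substance, the proof of the result being quoted: the paper itself gives no proof of Proposition~\ref{prop4} but imports it verbatim from \cite[Cor.~3.7]{DHRS}, and the two stages you describe (the Hardouin--Singer telescoper criterion, i.e.\ \cite[Prop.~3.6]{DHRS}, followed by the pole chase along the $\widetilde{\sigma}^{2}$-orbit) are exactly how that corollary is proved there. Two points should be tightened. First, in the local analysis the pole of order $m_0+n$ of $L(\widetilde a)$ at $\omega_0$ forces $g$ to have a pole of order \emph{at least} $m_0+n$ at $\omega_0$ or at $\omega_0+2\omega_3$ (both may occur, with partial cancellation), so the induction should be run with the maximal such order $M\geq m_0+n$; since the left-hand side has order $\leq m-1+n<M$ at every orbit point $\omega_0+2\ell\omega_3$, $\ell\neq 0$, the chase then propagates poles of order $M$ of $g$ along the half-orbit and the contradiction with ellipticity (the orbit points being pairwise distinct modulo $\Z\omega_1+\Z\omega_2$ by Assumption~\ref{main_hyp3}) goes through unchanged. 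Second, the first stage is invoked rather than proved, which is legitimate here, but one should record that applying the Hardouin--Singer criterion to the meromorphic lift $\rx$ uses both that the $\widetilde{\sigma}^{2}$-constants of $\C(\wp,\wp')$ reduce to $\C$ (again Assumption~\ref{main_hyp3}) and the $\omega_1$-periodicity of $\rx$ established in Theorem~\ref{theo2}, which is what allows the reduction to a difference--differential setting with constants $\C$; this is precisely the role of \eqref{eq:omega_1_per_rx} in \cite{DHRS}.
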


 Unfortunately, it may happen that Proposition \ref{prop4} is not sufficient to prove the differential transcendence of $\rx$.  We now state an equivalent condition to the  $\partial_{\omega}$-transcendence of $\rx$ that involves the  residues of $\widetilde{a}$.
Note that  $\widetilde{a}$ is meromorphic on $\C$ and is $(\omega_{1},\omega_{2})$-periodic. This shows that there exist $\omega_{\widetilde{a},1},\dots,\omega_{\widetilde{a},k}\in \C$ such that the set of poles of $\widetilde{a}$ is included in $\cup_{\ell=1}^{k}\omega_{\widetilde{a},\ell}+\omega_{1}\Z+\omega_{2}\Z$.
Recall, see Section \ref{sec21}, that $\omega_1$ is purely imaginary, $\omega_2,\omega_3$ are positive real numbers, and 
 $\sigma$ has infinite order if and only if $\omega_2/\omega_3 \notin \Q$.
 By \ref{main_hyp3}, $\widetilde{\sigma}^{2}$ has infinite order, so $\omega_2/\omega_3 \notin \Q$ and we find that for all $\omega_{0}\in \C$ and for all $\ell=1,\dots,k$, $(\omega_0+2\Z\omega_{3})\cap (\omega_{\widetilde{a},\ell}+\omega_{1}\Z+\omega_{2}\Z)$ has cardinality at most one. Then,  $\widetilde{a}$ has a finite number of poles in  $\omega_0+2\Z\omega_{3}$. Note that as $\widetilde{\sigma}^{2}(\omega)=\omega+2\omega_3$, $\omega_0+2\omega_{3}\Z$ is the orbit of $\omega_{0}$.

\begin{prop}[Prop.~B.2, \cite{DHRS}]\label{prop5}
The function $\rx$ is $\partial_{\omega}$-algebraic over $\C$ if and only if for all $\omega_{0}\in \C$, the following function is analytic at $\omega_0$
$$\sum_{i=1}^{t} \widetilde{a}(\omega+2n_{i}\omega_{3}),$$
where $n_{i}$ are the integers such that $\omega_0+2n_{i}\omega_{3}$  are poles of  $\widetilde{a}$.
\end{prop}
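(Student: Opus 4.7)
The plan is to approach this via the difference Galois theoretic framework of Hardouin--Singer, exploiting the first-order structure of the equation $\rx(\omega+2\omega_3)-\rx(\omega)=\widetilde{a}(\omega)$, where $\widetilde{\sigma}^{2}$ acts on $\C$ by translation by $2\omega_{3}$ and $\widetilde{a}\in\C(\wp,\wp')$ by Theorem \ref{theo2} and Section \ref{sec32}.

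For the sufficient direction, I would assume that for every $\omega_{0}\in\C$ the sum $\sum_{i}\widetilde{a}(\omega+2n_{i}\omega_{3})$ is analytic at $\omega_{0}$, and then build explicitly an elliptic function $\phi$ for the lattice $\omega_{1}\Z+2\omega_{3}\Z$ absorbing the singularities of $\widetilde{a}$: starting from each pole $P$ of $\widetilde{a}$, I would use partial-fraction expansions in terms of $\wp_{1,2\omega_{3}}$ and its derivative translated to $P,P-2\omega_{3},P-4\omega_{3},\ldots$, arranging the coefficients so that $\phi\circ\widetilde{\sigma}^{2}-\phi$ reproduces the polar part of $\widetilde{a}$ at every translate of $P$ modulo the lattice. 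The residue-cancellation hypothesis is precisely what makes such a $\phi$ well-defined as a single-valued elliptic function (otherwise a quasi-elliptic, logarithmic-type obstruction would appear). The difference $\rx-\phi$ is then $\widetilde{\sigma}^{2}$-invariant and, by \eqref{eq:omega_1_per_rx}, $\omega_{1}$-periodic, so it lies in $\C(\wp_{1,2\omega_{3}},\wp'_{1,2\omega_{3}})$ and is $\partial_{\omega}$-algebraic over $\C$. Since $\phi$ is elliptic it is $\partial_{\omega}$-algebraic, and therefore $\rx$ is $\partial_{\omega}$-algebraic.

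For the necessary direction, assume $\rx$ is $\partial_{\omega}$-algebraic. The task is to deduce the residue-cancellation condition, and this is the main obstacle. Here I would invoke the parameterized Picard--Vessiot machinery of \cite{HS} as adapted in \cite{DHRS}: the inhomogeneous $\widetilde{\sigma}^{2}$-equation $\rx\mapsto\rx+\widetilde{a}$ has Galois group contained in the additive group $\mathbb{G}_{a}$, and its parameterized Galois group is \emph{all} of $\mathbb{G}_{a}$ unless $\widetilde{a}$ admits a decomposition $\widetilde{a}=\phi\circ\widetilde{\sigma}^{2}-\phi+c$ with $\phi$ elliptic for the lattice $\omega_{1}\Z+2\omega_{3}\Z$ and $c\in\C$. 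The assumed differential algebraicity of $\rx$ forces this parameterized group to be a proper differential subgroup of $\mathbb{G}_{a}$, hence forces the decomposition to exist. Converting this decomposition into a local statement at each $\omega_{0}\in\C$ gives exactly the cancellation of the summed principal parts; specializing to residues yields the stated condition.

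The hard part will be the necessary direction. The sufficient direction is essentially an explicit analytic construction of $\phi$, but the converse genuinely uses the Galois-theoretic structure: one has to rule out all parameterized differential subgroups of $\mathbb{G}_{a}$ other than those coming from cohomological triviality, and this is the content of the general differential transcendence criterion of \cite{HS}. In practice, rather than reproving the machinery, one quotes \cite[Prop.~B.2]{DHRS}, where exactly this equivalence is established in a slightly more general context.
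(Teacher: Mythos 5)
The paper itself contains no proof of this proposition: as the bracketed attribution indicates, it is quoted verbatim from \cite[Prop.~B.2]{DHRS}, so your closing remark that in practice one simply cites that result is precisely what the authors do. In that sense your bottom line agrees with the paper; the problem is that the reconstruction you sketch would not work as written.

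Two concrete issues. First, the lattices are interchanged. By \eqref{eq:expression_group_universal_cover}, $\widetilde{\sigma}^{2}$ is the translation $\omega\mapsto\omega+2\omega_{3}$, so any function elliptic for the lattice $\omega_{1}\Z+2\omega_{3}\Z$ (in particular anything built from $\wp_{1,3}$) is $\widetilde{\sigma}^{2}$-invariant; your $\phi$ would then satisfy $\phi\circ\widetilde{\sigma}^{2}-\phi\equiv 0$ and could absorb nothing, and a decomposition $\widetilde{a}=\phi\circ\widetilde{\sigma}^{2}-\phi+c$ with such a $\phi$ would force $\widetilde{a}$ to be constant. The correct construction --- carried out in this paper in the proof of Theorem~\ref{thm3} --- uses blocks elliptic for the \emph{original} lattice $\omega_{1}\Z+\omega_{2}\Z$ (translates of $\wp$ and its derivatives for higher-order poles, and functions such as $1/(\wp(\omega-s)-\wp(v))$ to realize pairs of simple poles with opposite residues), summed over finitely many $2\omega_{3}$-translates; it is the remainder $\rx-\phi$ that is $\widetilde{\sigma}^{2}$-invariant and $\omega_{1}$-periodic and hence lies in $\C(\wp_{1,3},\wp'_{1,3})$. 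Second, in the necessity direction the dichotomy you invoke is not what the Galois theory delivers: differential algebraicity of $\rx$ is equivalent, via \cite{HS} as implemented in \cite{DHRS}, to the existence of a telescoper $L(\widetilde{a})=\phi\circ\widetilde{\sigma}^{2}-\phi$ with $L$ a nonzero linear differential operator with constant coefficients and $\phi$ elliptic for $\omega_{1}\Z+\omega_{2}\Z$; the case $L=1$ that you require is strictly stronger, and the passage from the telescoper to the orbit-wise cancellation of the full principal parts (the statement asks for analyticity of the orbit sum, i.e.\ cancellation at every order, not only of residues) is exactly the nontrivial content of Prop.~B.2, which your outline leaves to the citation. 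So either repair these two points, or do as the paper does and simply quote \cite[Prop.~B.2]{DHRS}.
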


When all the poles of $\widetilde{a}$ are simple, the condition that $\sum_{i=1}^{t} \widetilde{a}(\omega+2n_{i}\omega_{3})$ is analytic at  $\omega_0$ is equivalent to the fact that the sum of the residues of $\widetilde{a}$ at $\omega+2n_{i}\omega_{3}$ is zero.

We are now ready to state and prove one of our main results.
\begin{thm}\label{thm1}
Let us fix a transcendental value of $t$ in $(0,1)$. Assume that Assumptions \ref{main_hyp}, \ref{main_hyp2}, and \ref{main_hyp3} (for this transcendental value of $t$) hold.
Assume further that $d_{1,1}=d^{\varphi}_{0,1}= 0$. Then, $C(x,y;t)$ is $D$-transcendental.  More precisely, for the fixed value of $t$, $C(x,y)$ is $\partial_{x}$-transcendental over $\Q$ and  $\partial_{y}$-transcendental over $\Q$.
\end{thm}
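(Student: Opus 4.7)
By Lemma~\ref{lem2}, it suffices to prove that $\rx(\omega)$ is $\partial_{\omega}$-transcendental over $\C$ (the statements about $\partial_{x}$, $\partial_{y}$ then follow by the equivalences $(i)\Leftrightarrow(iii)\Leftrightarrow(v)$). Recall from Theorem~\ref{theo2} that
\begin{equation*}
\rx(\omega+2\omega_{3})=\rx(\omega)+\widetilde{a}(\omega),\qquad \widetilde{a}(\omega)=y(\omega+2\omega_{3})-y(\omega+\omega_{3}),
\end{equation*}
and $\widetilde{a}\in\C(\wp,\wp')$. Since $\widetilde{\sigma}^{2}$ has infinite order by Assumption~\ref{main_hyp3}, the criterion of Proposition~\ref{prop4} applies to this functional equation: it is enough to exhibit a pole $P$ of $\widetilde{a}$ of some order $m$ such that no other iterate $\widetilde{\sigma}^{2\ell}(P)$ with $\ell\in\Z^{*}$ is a pole of order $\geq m$.

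The key input is the hypothesis $d_{1,1}=d^{\varphi}_{0,1}=0$: by the first bullet of Lemma~\ref{lem4}, the only poles of $\mathbf{a}=\sigma^{2}(y)-\sigma(y)$ are the two \emph{double} poles $\sigma^{-1}(P_{0})$ and $\sigma^{-2}(P_{0})$, where $P_{0}=([0:1],[1:0])$. Passing to the universal cover, the poles of $\widetilde{a}$ in one fundamental domain are exactly the corresponding two double poles. Take $P=\sigma^{-1}(P_{0})$ (or equivalently any lift in $\C$). To apply Proposition~\ref{prop4} with $m=2$, I need to verify that $\sigma^{2\ell}(P)\neq \sigma^{-1}(P_{0}), \sigma^{-2}(P_{0})$ for every $\ell\in\Z^{*}$.

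The first impossibility is immediate: $\sigma^{2\ell}(\sigma^{-1}(P_{0}))=\sigma^{-1}(P_{0})$ forces $\sigma^{2\ell}$ to fix a point, which, via the uniformization $\widetilde{\sigma}^{2\ell}(\omega)=\omega+2\ell\omega_{3}$ from \eqref{eq:expression_group_universal_cover}, forces $\ell=0$ by Assumption~\ref{main_hyp3}. For the second, suppose $\sigma^{2\ell}(\sigma^{-1}(P_{0}))=\sigma^{-2}(P_{0})$ for some $\ell\in\Z^{*}$; this means $\sigma^{-2}(P_{0})$ and $\sigma^{-1}(P_{0})=\sigma(\sigma^{-2}(P_{0}))$ lie in the same $\sigma^{2}$-orbit, contradicting Lemma~\ref{lem5} applied to $Q=\sigma^{-2}(P_{0})$ (which gives $Q\not\sim\sigma(Q)$). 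Hence neither double pole of $\widetilde{a}$ is a $\widetilde{\sigma}^{2}$-translate of the other, and Proposition~\ref{prop4} yields the $\partial_{\omega}$-transcendence of $\rx$ over $\C$.

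The main point (and the only nontrivial one) is the orbit-separation argument, which rests entirely on Lemma~\ref{lem5} and therefore on the infinite order of $\sigma$. All remaining steps are invocations of results already established in the paper: Lemma~\ref{lem4} produces the explicit poles, Proposition~\ref{prop4} converts the orbit-separation into differential transcendence of $\rx$, and Lemma~\ref{lem2} transports this back to the differential transcendence of $C(x,y;t)$ in both $x$ and $y$ over $\Q$. No new analytic or Galois-theoretic input is needed; in particular one does not need the more refined residue criterion of Proposition~\ref{prop5}, because the double-pole structure is enough to rule out the existence of a telescoping certificate.
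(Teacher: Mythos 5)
Your argument is correct and is essentially the paper's own proof: Lemma~\ref{lem4} supplies the two double poles $\sigma^{-1}([0:1],[1:0])$ and $\sigma^{-2}([0:1],[1:0])$ of $\mathbf{a}$, Lemma~\ref{lem5} shows they lie in distinct $\sigma^{2}$-orbits, and Proposition~\ref{prop4} together with Lemma~\ref{lem2} yields the $\partial_{x}$- and $\partial_{y}$-transcendence of $C(x,y)$ over $\Q$. Your additional check that $\sigma^{2\ell}(P)=P$ is impossible for $\ell\neq 0$ (via the infinite order of $\widetilde{\sigma}$) is a detail the paper leaves implicit, but it does not change the route.
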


\begin{rem}
By Remark \ref{rem1}, for the fixed value of $t$, $C(x,y)$ is $\partial_{x}$-transcendental over $\C$ and  $\partial_{y}$-transcendental over $\C$.
\end{rem}
\begin{proof}
By Lemma \ref{lem4}, $\mathbf{a}$ has two double poles, $\sigma^{-1}([0:1],[1:0])$, $\sigma^{-2}([0:1],[1:0])$.  By Lemma \ref{lem5} they do not belong to the same $\sigma^{2}$-orbits.  By Proposition \ref{prop4} and Lemma~\ref{lem2}, $C(x,y)$ is $\partial_{x}$-transcendental over $\Q$ and  $\partial_{y}$-transcendental over $\Q$.
\end{proof}

\begin{ex}
Consider the weighted model with $d_{\pm 1,0}=d_{0,\pm 1}=1/6$, $d_{-1,-1}=1/3$, and the other $d_{i,j}$ are $0$ $$\begin{tikzpicture}[scale=1, baseline=(current bounding box.center)]
\draw[thick,->](0,0)--(-1,0);
\draw[thick,->](0,0)--(1,0);
\draw[thick,->](0,0)--(0,1);
\draw[thick,->](0,0)--(0,-1);
\draw[thick,->](0,0)--(-1,-1);
\put(5,20){{$1/6$}}
\put(-45,-25){{$1/3$}}
\put(5,-25){{$1/6$}}
\put(25,-10){{$1/6$}}
\put(-45,-10){{$1/6$}}
\end{tikzpicture}$$
Assumptions \ref{main_hyp}, \ref{main_hyp2} obviously hold and we have $d_{1,1}=d^{\varphi}_{0,1}= 0$. Using the fixed point method of \cite[Sec.~3]{BMM}, we can prove that for $t$ transcendental, Assumption \ref{main_hyp3} holds. 
 Then, $C(x,y;t)$ is $D$-transcendental.
\end{ex}

Recall, see Lemma \ref{lem5}, that $P_i\not\sim \sigma(P_i)$.
From the discussion on the residues in Section~\ref{sec32}, when the poles of $y$ are simple, the conclusion of Proposition~\ref{prop5} holds positively if and only if
$P_1 \sim  P_2$ (and then $\sigma(P_1) \sim \sigma( P_2))$. 
We have proved:

\begin{prop}\label{prop6}
Let us fix a transcendental value of $t$ in $(0,1)$. Assume that Assumptions \ref{main_hyp}, \ref{main_hyp2}, and \ref{main_hyp3} (for this transcendental value of $t$) hold.
Assume further that $d_{1,1}=d^{\varphi}_{0,1}\neq  0$. Then, $C(x,y)$ is $\partial_{x}$-algebraic over $\Q$ (resp.~$\partial_{y}$-algebraic over $\Q$) if and only if 
$$P_1 \sim P_2.$$
\end{prop}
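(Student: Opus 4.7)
The plan is to reduce, via Lemma \ref{lem2}, the question of $\partial_x$- and $\partial_y$-algebraicity of $C(x,y)$ over $\Q$ to the $\partial_\omega$-algebraicity of $\rx$ over $\C$, and then invoke the orbit-sum criterion of Proposition \ref{prop5}. Because $d_{1,1}=d^\varphi_{0,1}\neq0$, Lemma \ref{lem4} ensures that the poles of $\widetilde a$ are all simple and contained in the set $\{\sigma^{-1}(P_1),\sigma^{-1}(P_2),\sigma^{-2}(P_1),\sigma^{-2}(P_2)\}$. In the simple-pole case, Proposition \ref{prop5} is equivalent to saying that the sum of the residues of $\widetilde a$ along every $\widetilde{\sigma}^2$-orbit of poles vanishes. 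The residues of $\widetilde a$ at the (lifts of the) four poles above were computed in Section \ref{sec32} to be $-\alpha,+\alpha,+\alpha,-\alpha$, respectively, with $\alpha\neq 0$.

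The heart of the argument is then a short combinatorial analysis of how these four poles group into $\sim$-classes, that is, into $\widetilde{\sigma}^2$-orbits. Lemma \ref{lem5} ($P\not\sim\sigma(P)$ for every $P$) immediately forbids $\sigma^{-1}(P_i)\sim\sigma^{-2}(P_i)$ for $i=1,2$. Consequently, the only relations that can identify two of the four poles are either (a) $P_1\sim P_2$, which forces simultaneously $\sigma^{-1}(P_1)\sim\sigma^{-1}(P_2)$ and $\sigma^{-2}(P_1)\sim\sigma^{-2}(P_2)$, or (b) $\sigma(P_1)\sim P_2$, which amounts to $\sigma^{-1}(P_1)\sim\sigma^{-2}(P_2)$ and $\sigma^{-2}(P_1)\sim\sigma^{-1}(P_2)$.

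If (a) holds, the four poles pair up into two orbits, each with residue sum $(-\alpha)+(+\alpha)=0$, and Proposition \ref{prop5} yields $\partial_\omega$-algebraicity of $\rx$. If instead $P_1\not\sim P_2$, then either all four poles lie in distinct orbits, each contributing a residue $\pm\alpha\neq 0$, or case (b) applies and the two orbits carry residue sums $(-\alpha)+(-\alpha)=-2\alpha$ and $(+\alpha)+(+\alpha)=+2\alpha$, both nonzero. In either situation some orbit has nonzero residue sum, Proposition \ref{prop5} fails, and $\rx$ is $\partial_\omega$-transcendental. Lemma \ref{lem2} then transfers the conclusion back to $C(x,y)$, both in $\partial_x$ and in $\partial_y$.

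There is essentially no real obstacle: all the ingredients (Lemmas \ref{lem2}, \ref{lem4}, \ref{lem5}, the residue table of Section \ref{sec32}, and Proposition \ref{prop5}) have been assembled earlier. The only delicate point worth stressing is sub-case (b): even though it identifies poles pairwise without $P_1\sim P_2$ holding, the residues in each of the two resulting orbits happen to share the same sign, so they add to $\pm 2\alpha$ rather than cancel, which is precisely what makes the equivalence in the proposition clean.
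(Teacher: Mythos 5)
Your argument is correct and is essentially the paper's own proof: the paper likewise combines Lemma \ref{lem2}, the pole/residue computation of Lemma \ref{lem4} and Section \ref{sec32}, Lemma \ref{lem5} to exclude $\sigma^{-1}(P_i)\sim\sigma^{-2}(P_i)$, and the residue-sum criterion of Proposition \ref{prop5}. The only difference is that you spell out explicitly the sub-case $\sigma(P_1)\sim P_2$, where the residues reinforce rather than cancel, which the paper leaves implicit in the phrase ``holds positively if and only if $P_1\sim P_2$''.
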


\begin{ex}
Consider the weighted model with $d_{ 1,0}=d_{0, 1}=d_{1,1}=1/5$, $d_{-1,-1}=2/5$, and the other $d_{i,j}$ are $0$ 
$$\begin{tikzpicture}[scale=1, baseline=(current bounding box.center)]
\draw[thick,->](0,0)--(1,0);
\draw[thick,->](0,0)--(0,1);
\draw[thick,->](0,0)--(-1,-1);
\draw[thick,->](0,0)--(1,1);
\put(-20,20){{$1/5$}}
\put(25,15){{$1/5$}}
\put(-15,-25){{$2/5$}}
\put(25,-10){{$1/5$}}
\end{tikzpicture}$$
Assumptions \ref{main_hyp}, \ref{main_hyp2} obviously hold and we have $d_{1,1}=d^{\varphi}_{0,1}\neq 0$. Using the fixed point method of \cite[Sec.~3]{BMM}, we can prove that for $t$ transcendental, Assumption \ref{main_hyp3} holds.  We have $$P_1=([0:1],[1:0])\hbox{ and } P_2=([-1:1],[1:0]).$$
A straightforward computation shows that $\iota_1 (P_1)=P_1$ and $\sigma(P_1)=P_2$. By Lemma \ref{lem5}, $P_1\not\sim \sigma(P_1)=P_2$. Then, $C(x,y;t)$ is $D$-transcendental.
\end{ex}
\subsection{Unweighted cases}
Let us now look at the
unweighted diagonally symmetric models with infinite group:

	\begin{center}\begin{tabular}{cccc}
		$\ \diagr{E,NE,N,SW}\ $
		\qquad&\qquad 
		$\ \diagr{W,NE,S,SW}\ $ 
		\qquad&\qquad
		$\ \diagr{E,W,N,S,SW}\ $
		\qquad&\qquad
		$\ \diagr{E,W,N,S,NE}\ $ 
	\end{tabular}\end{center}
	
After performing the change of variable $\varphi$, we obtain models corresponding to the following steps:

	\begin{center}\begin{tabular}{cccc}
		$\ \diagr{W,N,S,NE}\ $
		\qquad&\qquad 
		$\ \diagr{N,E,S,SW}\ $ 
		\qquad&\qquad
		$\ \diagr{E,W,SW,S,NE}\ $
		\qquad&\qquad
		$\ \diagr{E,W,N,SW,NE}\ $ 
	\end{tabular}\end{center}

Let us now apply the results of the previous sections in order to determine the nature of the generating series.
	\begin{thm}\label{thm2}
The following holds:
\begin{itemize}
\item Assume that the model of the walk is the following: 
$$ \begin{tikzpicture}[scale=.5, baseline=(current bounding box.center)]
\draw[thick,->](0,0)--(0,1);
\draw[thick,->](0,0)--(0,-1);
\draw[thick,->](0,0)--(1,0);
\draw[thick,->](0,0)--(-1,0);
\draw[thick,->](0,0)--(1,1);
\end{tikzpicture}$$
Then,
$C(x,y;t)$ is D-algebraic.
\item Assume that the model of the walk is one of the following:
$$\begin{tikzpicture}[scale=.5, baseline=(current bounding box.center)]
\draw[thick,->](0,0)--(0,1);
\draw[thick,->](0,0)--(1,1);
\draw[thick,->](0,0)--(1,0);
\draw[thick,->](0,0)--(-1,-1);
\end{tikzpicture}\quad  \quad \begin{tikzpicture}[scale=.5, baseline=(current bounding box.center)]
\draw[thick,->](0,0)--(0,-1);
\draw[thick,->](0,0)--(1,1);
\draw[thick,->](0,0)--(-1,0);
\draw[thick,->](0,0)--(-1,-1);
\end{tikzpicture}\quad \quad  \begin{tikzpicture}[scale=.5, baseline=(current bounding box.center)]
\draw[thick,->](0,0)--(0,1);
\draw[thick,->](0,0)--(0,-1);
\draw[thick,->](0,0)--(1,0);
\draw[thick,->](0,0)--(-1,0);
\draw[thick,->](0,0)--(-1,-1);
\end{tikzpicture}$$
Then, $C(x,y;t)$ is D-transcendental. More precisely, there exists $t\in(0,1)$ such that $C(x,y)$ is $\partial_{x}$-transcendental over $\Q$. The same holds for $\partial_{y}$. 
\end{itemize}
	\end{thm}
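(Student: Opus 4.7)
The plan is to apply Theorem \ref{thm3}, Theorem \ref{thm1}, and Proposition \ref{prop6} to each of the four unweighted models. First I would verify the standing hypotheses \ref{main_hyp}, \ref{main_hyp2}, and, for a well-chosen transcendental $t\in(0,1)$, \ref{main_hyp3}. Assumptions \ref{main_hyp} and \ref{main_hyp2} hold by inspection of each step set. For Assumption \ref{main_hyp3}, I would use the fixed-point strategy of \cite[Sec.~3]{BMM} to exhibit a transcendental value of $t$ for which $\sigma$ has infinite order, as already illustrated in Example \ref{ex1} and in the examples following Theorem \ref{thm1} and Proposition \ref{prop6}.

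For the model with step set $\{E,W,N,S,NE\}$, the unweighted weights $d_{\pm1,0}=d_{0,\pm1}=d_{1,1}=1/5$ constitute the specialization $\lambda=\mu=1/5$ of Example \ref{ex1}; the orbit computation carried out there shows $\sigma^4(P_1)=P_2$ for every $t\in(0,1)$, so Theorem \ref{thm3} immediately yields the D-algebraicity of $C(x,y;t)$. For the model $\{E,W,N,S,SW\}$, we have $d_{1,1}=0$, hence $d^\varphi_{0,1}=0$ by Lemma \ref{lem1}, and Theorem \ref{thm1} applies directly to give $\partial_x$- and $\partial_y$-transcendence of $C(x,y;t)$ over $\Q$ at the chosen transcendental $t$.

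The two remaining models $\{N,NE,E,SW\}$ and $\{S,NE,W,SW\}$ both contain the step $(1,1)$, so $d_{1,1}=d^\varphi_{0,1}\neq 0$; Proposition \ref{prop6} then reduces the D-transcendence statement to the orbit condition $P_1\not\sim P_2$, where $P_1=([0:1],[1:0])$ and $P_2=([-d^\varphi_{0,1}:d^\varphi_{1,1}],[1:0])$. For each of the two models, I would use Lemma \ref{lem1} to write down the $\varphi$-transformed step set and then iterate the explicit formulas for $\iota_1$ and $\iota_2$ starting from $P_1$, as in the diagram drawn in Example \ref{ex1}. The expected outcome is that $P_2$ appears as an \emph{odd} $\sigma$-iterate of $P_1$, at which point Lemma \ref{lem5} immediately rules out $P_1\sim P_2$.

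The main obstacle is this last orbit-tracking calculation: it must be carried out separately for each of the two asymmetric models, but since only finitely many points of $\Etproj_{\varphi}$ have an infinite coordinate and the relevant dynamics is restricted to them, the calculation is mechanical and finite. All other ingredients, namely the differential algebraicity of $\wp$, of the lattice periods $\omega_1,\omega_2,\omega_3$, and of the residues of $\widetilde{a}$, together with the analytic continuation of $\rx$ to $\C$, have been packaged into the statements of Theorem \ref{thm3}, Theorem \ref{thm1}, and Proposition \ref{prop6}, so no further analytic work should be required beyond this finite case-by-case verification.
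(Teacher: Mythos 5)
Your proposal is correct and takes essentially the same route as the paper: the first model is exactly the specialization $\lambda=\mu=1/5$ of Example~\ref{ex1} combined with Theorem~\ref{thm3}, the model with $d_{1,1}=0$ is settled by Theorem~\ref{thm1}, and the two models containing the step $(1,1)$ are settled by Proposition~\ref{prop6} together with Lemma~\ref{lem5} once $P_2$ is identified as an odd $\sigma$-iterate of $P_1$. The only deviations are matters of sourcing rather than substance: the paper secures Assumption~\ref{main_hyp3} by the countability argument of \cite[Prop.~2.6]{DHRS} (the set of $t$ giving a finite group is denumerable, so some transcendental $t$ works) instead of the BMM fixed-point method you invoke, and it quotes the orbit computations of \cite[Sec.~6]{DHRS} (models wIIC.1, wIIB.1, wIIC.2), which give precisely the odd-iterate relations $\sigma^{-1}(P_1)\sim\sigma^{-2}(P_2)$ and $\sigma^{-2}(P_1)\sim\sigma^{-1}(P_2)$, rather than redoing the $\iota_1,\iota_2$ iteration by hand as you propose.
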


\begin{proof}
The first statement is a particular case of  Example \ref{ex1}. Let us now prove the second statement. 
We want to apply Theorem \ref{thm1} and Proposition \ref{prop6}, so we have to check that the three assumptions are satisfied.
Assumptions \ref{main_hyp} and \ref{main_hyp2} are clearly verified in the  models under consideration. Let us prove that there exists a transcendental number $t\in (0,1)$ such that
Assumption \ref{main_hyp3} holds.
By \cite[Prop.~2.6]{DHRS}, the set of $t\in (0,1)$ such that the group specialized at $t$ is finite is denumerable. Since the transcendental numbers in $(0,1)$ are not denumerable, this implies the existence of a transcendental number $t\in (0,1)$ such that \ref{main_hyp3} is verified (for this value of $t$).\par
In the third case we use Theorem \ref{thm1} to deduce the differential transcendence because $d^{\varphi}_{0,1}= 0$. Let us assume that  $d^{\varphi}_{0,1}\neq 0$. We want to apply Proposition~\ref{prop6}. Fortunately, in this situation, the $\sigma$-orbits (and therefore the $\sigma^{2}$-orbits) have been computed in \cite[Sec.~6]{DHRS} (more precisely, see the models wIIC.1, wIIB.1, and wIIC.2)
 $$\sigma^{-1}(P_1)\sim \sigma^{-2}(P_2), \quad \sigma^{-2}(P_1)\sim\sigma^{-1}(P_2) .$$
	\end{proof}
	
\bibliographystyle{alpha}
 \bibliography{bibdata}

\end{document}